\setlist[enumerate]{itemsep=3pt,topsep=3pt}
\setlist[enumerate,1]{label=(\arabic*)}
\newcommand{\s}{\vspace{0.2cm}}
\newcommand{\set}[1]{\{#1\}}
\newcommand{\braket}[1]{\langle #1 \rangle}
\numberwithin{equation}{section}
\newtheorem{thm}{Theorem}
\newtheorem{cor}{Corollary}
\newtheorem{lem}{Lemma}
\newtheorem{pro}{Proposition}
\theoremstyle{definition}
\newtheorem{defi}{Definition}
\theoremstyle{remark}
\newtheorem{rem}{Remark}
\newtheorem{que}{Question}
\title{On dihedral group actions on Riemann surfaces}
\author{Pablo Alvarado-Seguel}
\address{Departamento de Matem\'aticas, Facultad de Ciencias, Universidad de Chile, Las Palmeras 3425, Santiago, Chile}
\email{pabloalvarado@ug.uchile.cl}
\author{Sebasti\'an Reyes-Carocca}
\address{Departamento de Matem\'aticas, Facultad de Ciencias, Universidad de Chile, Las Palmeras 3425, Santiago, Chile}
\email{sebastianreyes.c@uchile.cl}
\thanks{This work was partially supported by ANID Fondecyt Regular Grants 1220099 and 1230708, and MATH-AmSud Project 22-MATH-03.}
\keywords{Riemann surfaces, Jacobians and Prym varieties, group actions}
\subjclass[2020]{30F10, 14H37, 14H40, 14H30}
\date{}
\begin{document}

\begin{abstract}    
    This article deals with dihedral group actions on compact Riemann surfaces and the interplay between different geometric data associated to them.
    First, a bijective correspondence between geometric signatures and analytic representations is obtained.
    Second, a refinement of a result of Bujalance, Cirre, Gamboa and Gromadzki about signature realization is provided.
    Finally, we apply our results to isogeny decompositions of Jacobians by Prym varieties and by elliptic curves, extending results of Carocca, Recillas and Rodr\'iguez.
    In particular, we give a complete classification of Jacobians with dihedral action whose group algebra decomposition induces a decomposition into factors of the same dimension.
\end{abstract}

\maketitle
\thispagestyle{empty}

\section{Introduction} 


The study of  groups of automorphisms of compact Riemann surfaces (or, equivalently, complex projective algebraic curves) and their Jacobian varieties is a classical and extensively considered research area in complex and algebraic geometry.

\s

While the field rests upon foundational results established by Riemann, Klein and Jacobi, among others, a vast majority of the literature has been published within the past three decades. This renewed interest is  based, partially, on  the advent of significant advancements in computer algebra systems and finite group theory, as well as on the variety of points of view that can be considered to study these objects. 

\s
 
 A seminal result in the area claims that each abstract finite group can be realized as a group of automorphisms of some compact Riemann surface, and therefore of some Jacobian variety. Articles aimed at studying group actions of special classes of groups can be found in the literature in plentiful supply. We refer to \cite{@Broughton22}, \cite{@BujalanceCirre21} and \cite{@Rodriguez14} for an up-to-date treatment of this topic.

\s

The geometry of the action of a group on a compact Riemann surface is partially encoded in the so-called {\it signature}, namely, a tuple of nonnegative integers $$(\gamma; m_1, \ldots, m_{v})$$where $\gamma$ is the genus of the corresponding quotient Riemann surface, and $m_1, \ldots, m_v$ are related to the fixed points of the action.

\s

The intimate relationship between a compact Riemann surface $S$ and its Jacobian variety $JS$ is given by the  classical  Torelli theorem. This remarkable result states that $S$ is completely determined by $JS$. More precisely, $$S \cong S' \, \, \iff \,\, JS \cong JS'.$$

We recall that if a group $G$ acts on a compact Riemann surface $S$ then $G$ also acts on the Jacobian variety $JS$. This last action can be represented in the dual of the complex vector space of holomorphic differentials $\Omega^1(S)$ of $S$.
This representation $$\rho_a: G \to \operatorname{GL}(\Omega^1(S)^*)\cong \mbox{GL}(g, \mathbb{C})$$ is called the {\it analytic representation} of $G$.

A group action on $S$ can be used to obtain the {\it group algebra decomposition} of $JS.$ More precisely, a $G$-equivariant isogeny  decomposition $$JS \sim B_1^{n_1}\times \cdots \times B_r^{n_r}$$where each factor $B_j$ is an abelian subvariety of $JS$.  The theory behind this decomposition is a relatively recent development, pioneered by the works of Lange and Recillas \cite{@LangeRecillas04}, and Carocca and Rodríguez \cite{@CaroccaRodriguez06}.
Shortly thereafter, Rojas in \cite{@Rojas07} introduced the notion of {\it geometric signature}, a generalization of the usual signature of an action. The geometric signature  captures more information; for instance,  
the geometric structure of the intermediate covers and the dimension of the subvarieties $B_j$ of the group algebra decomposition.

\s

Dihedral actions on compact Riemann surfaces serve as a rich study case.
Notably, they were foundational examples for the theory of group algebra decompositions of Jacobians, providing valuable insights into the general structure of these decompositions.
Concretely, Recillas and Rodríguez in \cite{@RecillasRodriguez98} worked out the case $ \mathbf{D}_3$, and later, a more general treatment was given by Carocca, Recillas and Rodríguez in \cite{@CaroccaEtAl02}.

\s

In a different line of research, by considering the theory of uniformization,  Fuchsian groups and surface-kernel epimorphisms, Bujalance, Cirre, Gamboa and Gromadzki in \cite{@BujalanceEtAl03} succeeded in providing necessary and sufficient conditions for a signature to admit a dihedral action. 

\s

For related works on dihedral group actions, see \cite{@IzquierdoEtAl19, @LangeEtAl14, @RecillasRodriguez06}.

\s

In this article we deal with dihedral actions on compact Riemann surfaces of genus $g \geqslant 2$.
Our aim is to understand in detail the interplay between different notions of geometric data of an action.
The results of this paper can be summarized as follows. 

\s

{\bf (1)} We prove that there is a bijective correspondence between geometric signatures and analytic representations of dihedral actions on compact Riemann surfaces (Theorem~\ref{thm:bij_geosig_anarep}).
    Explicit formulas are provided.

\s

{\bf (2)} We state necessary and sufficient conditions for a geometric signature to admit a dihedral action (Theorem~\ref{thm-existence_even_0} and Theorem~\ref{thm-existence_even_>0}). 
    These results are a refinement of the results of \cite{@BujalanceEtAl03}.

     \s

{\bf (3)} We solve the problem of deciding when a $\mathbb{C}$-representation is equivalent to the analytic representation of a dihedral action (Theorem~\ref{thm-dihedral_rep_existence_odd} and Theorem~\ref{thm-dihedral_rep_existence_even}).
  
   \s

{\bf (4)} We prove that the dihedral group $\mathbf{D}_n$ is Prym-affordable if and only if $n$ is the power of a prime number (Theorem~\ref{thm:dihedral_Prym_affordable}), extending results of \cite{@CaroccaEtAl02}.
    \s

{\bf (5)}  We characterize the group algebra components of a Jacobian with $\mathbf{D}_n$-action ($n$ odd) that are isogenous to the Prym variety of an intermediate cover (Theorem~\ref{thm:GAD_component_Prym}).

\s

{\bf (6)}  We provide an exhaustive list of geometric signatures of $\mathbf{D}_n$ for which the corresponding group algebra decomposition yields a complete decomposition (namely, in terms of elliptic curves) of the Jacobian (Theorem~\ref{thm:comp_JacDec}).

 \s

{\bf (7)}  We generalize {\bf(6)} and give an exhaustive list of geometric signatures of $\mathbf{D}_n$ for which the corresponding group algebra decomposition provides a decomposition of the Jacobian into factors of the same dimension (Theorem~\ref{thm:kdec_Jac}).

\s
This article is organized as follows.
In Section §\ref{chp:prelim} we shall briefly review the basic background:
group actions on Riemann surfaces, $\mathbb{Q}$-representations, abelian varieties, and representations of automorphism groups.
The  results  will be stated and proved in Section §\ref{chp-dihedral_analytic_rep} (interplay between geometric data) and  Section §\ref{chp:existence} (geometric signature realization).
Finally, Section §\ref{chp:GAD} will be concerned with applications of our results to decomposition of Jacobian varieties. 
\section{Preliminaries}
\label{chp:prelim}

\subsection*{Group actions on Riemann surfaces and Fuchsian groups}

Let $S$ be a compact Riemann surface of genus $g \geqslant 2.$ 
An \emph{action} of a group $G$ on  $S$ is a monomorphism $G \to \operatorname{Aut}(S)$ into the (full) automorphism group of $S$.
A classical result due to Hurwitz states that $G$ is finite and $$|G| \leqslant 84(g-1).$$

Each $G$-action on $S$ induces a \emph{Galois covering} $\pi_G: S \to S_G$, where $S_G$ denotes the quotient Riemann surface  given by the action of $G$ on $S$.
The \emph{signature} of the action is the tuple \begin{equation}\label{signa}(\gamma; m_1, \ldots, m_v),\end{equation} where $\gamma$ is the genus of $S_G$ and $m_1, \ldots, m_v$ are the orders of the stabilizer subgroups of $G$ associated to the $G$-orbits of the ramification points of $\pi_G$.

\s

If the action has signature \eqref{signa} then it satisfies the \emph{Riemann-Hurwitz formula}
\begin{equation}\textstyle
    2g - 2 = |G| [ 2\gamma - 2 + \sum_{j=1}^v ( 1 - \frac{1}{m_j} ) ].
\end{equation}

    Following \cite{@Rojas07}, the \emph{geometric signature} of  $\pi_G: S \to S_G$ is the tuple
    \begin{equation}\label{signageo}
        (\gamma; G_1, \ldots, G_v),
    \end{equation}
    where $\gamma$ is the genus of $S_G$, and $G_1, \ldots, G_v$ are the conjugacy classes of the stabilizer subgroups of $G$ associated to the $G$-orbits of the ramification points of $\pi_G$.

\s 

For any pair of subgroups $H \leqslant K$ of $G$, the induced maps $$\pi_K^H: S_H \to S_K$$are called \emph{intermediate coverings} of $\pi_G$.
The genus and the ramification data of the intermediate coverings are determined by the geometric signature of the $G$-action.
We refer to \cite[§3]{@Rojas07} for more details.

\s 

Let $\mathbb{H}$ denote the upper half-plane.
    A \emph{Fuchsian group} $\Delta$ is a discrete subgroup of $$\operatorname{Aut}(\mathbb{H})\cong \operatorname{\mathbb{P}SL}(2, \mathbb{R}).$$
    A \emph{surface Fuchsian group} is a torsion free Fuchsian group.

\s 

Let $\Delta$ be a co-compact Fuchsian group, that is, $\mathbb{H}_{\Delta}$ is compact.
The universal covering map $\mathbb{H} \to \mathbb{H}_{\Delta}$ is unramified if and only if $\Delta$ is torsion free.
If $\gamma$ denotes the genus of $\mathbb{H}_{\Delta}$ and $m_1, \ldots, m_v$ are the ramification indices of the branch values of the covering map $\Delta \to \mathbb{H}_\Delta$, then the tuple $$s(\Delta):=(\gamma; m_1, \ldots, m_v)$$ is the \emph{signature} of $\Delta$. In addition, in such a case, $\Delta$ has a canonical presentation
\begin{equation}\label{eq-fuchs_canonical_rep}\textstyle
    \langle \alpha_1, \beta_1 \ldots, \alpha_\gamma, \beta_\gamma, x_1, \ldots, x_v \mid x_1^{m_1} = \cdots = x_v^{m_v} = \prod_{i=1}^\gamma [\alpha_i, \beta_i] \prod_{j=1}^v x_j = 1 \rangle,
\end{equation}
where the  brackets denotes the commutator.
The elements $\alpha_1, \beta_1, \ldots, \alpha_\gamma,\beta_\gamma$ are  the \emph{hyperbolic generators} of $\Delta$, whereas $x_1, \ldots, x_v$ are the \emph{elliptic generators} of $\Delta$.

\s

Let $S$ be a compact Riemann surface of genus $g \geqslant2$. The uniformization theorem states that there exists a co-compact surface Fuchsian group $\Gamma$ such that $$S \cong  \mathbb{H}_\Gamma.$$

In addition, Riemann's existence theorem ensures that a group $G$ acts on $S \cong \mathbb{H}_\Gamma$ with signature $\sigma$ if and only if there exists a Fuchsian group $\Delta$ of signature $\sigma$ and an exact sequence of groups
        \begin{equation}
            1 \to \Gamma \to \Delta \xrightarrow{\theta} G \to 1.
        \end{equation}
        In this case $S_G \cong \mathbb{H}_\Delta$ and $g$ satisfies the Riemann-Hurwitz formula.

Note that the signature of $\Delta$ and the signature of the action of $G$ agree.
We say that the action is represented by the \emph{surface-kernel epimorphism} (hereafter, ske for short) $\theta: \Delta \to G$.
It is common to identify $\theta$ with the tuple or \emph{generating vector}
\begin{equation}
    (\theta(\alpha_1), \theta(\beta_1), \ldots, \theta(\alpha_\gamma), \theta(\beta_\gamma), \theta(x_1), \ldots, \theta(x_v)) \in G^{2\gamma+v}.
\end{equation}

We refer to \cite{@Broughton22}, \cite{@FarkasKra92} and \cite{@Miranda95} for more details concerning compact Riemann surfaces and group actions.

\subsection*{Irreducible $\mathbb{Q}$-representations}

Let $G$ be a finite group and let $\mathbb{F}$ be a field of characteristic zero.
An \emph{$\mathbb{F}$-representation} of $G$ is a group homomorphism $\rho: G \to \operatorname{GL}(V)$ into the general linear group of a finite-dimensional  $\mathbb{F}$-vector space $V$.
We will usually abuse notation and simply write $V$ instead of $\rho$.
The \emph{degree} $d_V$ of $V$ is the dimension of $V$ as an $\mathbb{F}$-vector space, and the \textit{character} $\chi_V$ of $V$ is the map$$\chi_V : G \to \mathbb{C} \mbox{ such that } g \mapsto \mbox{trace of }V(g).$$ 
Two representations are \textit{equivalent} if their characters agree; we write $V_1 \cong V_2$. The \textit{character field} $K_V$ of $V$ is the extension of $\mathbb{F}$ by the values of the character of $V$.
The \emph{Schur index} $s_V$ is the smallest positive integer such that there exists a degree $s_V$ extension of fields $L_V>K_V$ over which $V$ can be defined.

\s

We denote by $\operatorname{Irr}_\mathbb{F}(G)$ the set formed by the irreducible $\mathbb{F}$-representations of $G$.
If $V$ is an $\mathbb{F}$-representation and $\operatorname{Irr}_\mathbb{F}(G) = \set{U_1, \ldots, U_v}$, then for each $1 \leqslant j \leqslant v$ there exists a unique nonnegative integer $a_j$, the \emph{multiplicity} of $U_j$ in $V$, such that
\begin{equation}
    V = a_1 U_1 \oplus \cdots \oplus a_v U_v,
\end{equation}
where $a_j U_j = U_j \oplus \overset{a_j}{\cdots} \oplus U_j$.
The integer $a_j$ agrees with $\braket{V, U_j}/\braket{U_j, U_j}$, where
\begin{equation}\textstyle
    \braket{V, U} = \tfrac{1}{|G|} \sum_{g \in G} \chi_V(g) \overline{\chi_U}(g).
\end{equation}

It is known that for $W \in \operatorname{Irr}_\mathbb{Q}(G)$ there exists $V \in \operatorname{Irr}_\mathbb{C}(G)$ such that
\begin{equation}
    W \otimes_\mathbb{Q} \mathbb{C} \cong s_V (\oplus_\sigma V^\sigma) = (\oplus_\sigma V^\sigma) \oplus \overset{s_V}{\cdots} \oplus (\oplus_\sigma V^\sigma),
\end{equation}
where the direct sum is taken over the Galois group associated to the field extension $K_V > \mathbb{Q}$.
We say that $V$ and $W$ are \emph{Galois associated}.

\s

Let $H$ be a subgroup of $G$.
The \emph{fixed subspace} of $V$ under $H$ is
\begin{equation}
    V^H = \set{ v\in V \mid \rho(h)(v)=v\ \text{for all}\ h \in H};
\end{equation}
we denote its dimension by $d_V^H$.
We refer to \cite{@Serre77} and \cite{@Steinberg12} for more details.

\subsection*{Abelian varieties}

A $g$-dimensional \textit{complex torus} $X = V_\Lambda$ is the quotient of a $g$-dimensional $\mathbb{C}$-vector space $V$ by a discrete subgroup $\Lambda$ of $V$ of maximal rank.
Each complex torus is an abelian group and a $g$-dimensional compact connected complex analytic manifold.
A {\it homomorphism} between complex tori is a holomorphic map which is  also a group homomorphism.
If $f: X_1 \to X_2$ is a  tori homomorphism then it is induced by a unique $\mathbb{C}$-linear map $$\rho_a(f): V_1 \to V_2 \mbox{ such that }\rho_a(f)(\Lambda_1) \subset \Lambda_2 .$$
The restriction of $\rho_a(f)$ to $\Lambda_1$ is a $\mathbb{Z}$-linear map $\rho_r(f): \Lambda_1 \to \Lambda_2$. The maps $$
\rho_a : \operatorname{Hom}(X_1,X_2) \to \operatorname{Hom}_\mathbb{C}(V_1, V_2) \mbox{ and }
\rho_r : \operatorname{Hom}(X_1, X_2) \to \operatorname{Hom}_\mathbb{Z}(\Lambda_1, \Lambda_2)$$are called the \textit{analytic representation} and \textit{rational representation} of $\operatorname{Hom}(X_1, X_2)$. 
Both representations can be extended to
\begin{equation}
    \operatorname{Hom}_\mathbb{Q}(X_1, X_2) := \operatorname{Hom}(X_1, X_2) \otimes_\mathbb{Z} \mathbb{Q}.
\end{equation}
The (extension to $\mathbb{C}$ of the) rational representation is equivalent to the direct sum of the analytic representation and its complex conjugate:
\begin{equation}\label{eq-rational-analytic-decomposition}
    \rho_r \otimes 1 \cong \rho_a \oplus \overline{\rho_a}.
\end{equation}

An \textit{isogeny} of tori is a surjective homomorphism with finite kernel.
Two isogenous tori are denoted by $X_1 \sim X_2$.

\s

An \textit{abelian variety} is a complex torus which is also a complex projective algebraic variety.
The \textit{Jacobian variety} $JS$ of a compact Riemann surface $S$ of genus $g$ is an (irreducible principally polarized) abelian variety of dimension $g$: it is the quotient
\begin{equation}
    JS = \Omega^1(S)^*/\Lambda
\end{equation}
of linear functionals on the space of holomorphic differentials modulo periods.

As mentioned in the introduction, Torelli’s theorem states that
\begin{equation}
    S \cong S' \iff JS \cong JS',
\end{equation}
where $JS \cong JS'$ is an isomorphism of principally polarized abelian varieties.

\s

Given a non-constant holomorphic map $f: S_1 \to S_2$ between compact Riemann surfaces, the pullback $f^*: JS_2 \to JS_1$ is an isogeny onto its image $f^*(JS_2)$.
By Poincare's theorem there exists an abelian subvariety $P(f)$ of $f^*(JS_2)$ such that
\begin{equation}
    JS_1 \sim JS_2 \times P(f).
\end{equation}The subvariety $P(f)$ is called the \emph{Prym variety} of $f$. We refer to \cite{@BirkenhakeLange04} for more details on abelian varieties.

\subsection*{Representations of groups of automorphisms}

Let $G$ be a finite group, and assume that $G$ acts on a compact Riemann surface $S$.
Then there is an induced action of $G$ on $JS$.
Without loss of generality, the composite maps
$$G \to \operatorname{Aut}(JS) \xrightarrow{\rho_a} \operatorname{GL}(\Omega^1(S)^*)  \mbox{ and }G \to \operatorname{Aut}(JS) \xrightarrow{\rho_r} \operatorname{GL}(\Lambda \otimes_\mathbb{Z} \mathbb{Q})$$
are called the \emph{analytic representation} and the \emph{rational representation} of the action.
Abusing notation, we denote them by $\rho_a$ and $\rho_r$.

\s

Set $\zeta_n = e^{2\pi i/n}$ and consider the function
$\mathcal{N}: \operatorname{Irr}_\mathbb{C}(G) \times G \to \mathbb{Q}$
defined as
$$\textstyle \mathcal{N}(V,g) = \sum_{\alpha=1}^{|g|} N_{g,\alpha} \frac{|g|-\alpha}{|g|}$$
where $N_{g,\alpha}$ is the number of eigenvalues of $V(g)$ that are equal to $\zeta_{|g|}^\alpha$. Assume that the action of $G$ on $S$ has signature \eqref{signa}  and is represented by the ske $\theta: \Delta \to G$.
If $V \in \operatorname{Irr}_\mathbb{C}(G)$ is nontrivial, then the  Chevalley-Weil formula \cite{@ChevalleyWeil34} states that
\begin{equation}\textstyle
    \braket{\rho_a,V} = d_V(\gamma-1) + \sum_{j=1}^v \mathcal{N}(V,c_j),
\end{equation}
where $c_1, \ldots, c_v$ are the images of the $v$ elliptic generators of $\Delta$.
If $V$ is the trivial representation then $\braket{\rho_a, V} = \gamma$.

\s
    
Similarly, following \cite[Theorem 5.10]{@Rojas07}, if the geometric signature of the action is  \eqref{signageo} and $V \in \operatorname{Irr}_\mathbb{C}(G)$ is nontrivial, then
\begin{equation}\textstyle
    \braket{\rho_r, V} = 2 d_V (\gamma-1) + \sum_{k=1}^v (d_V - d_V^{G_k}).
\end{equation}
If $V$ is the trivial representation then $\braket{\rho_r, V} = 2 \gamma$.

\s

\paragraph{\bf Notation.} If $g \in G$ and $a \in \mathbb{Z}_+$, then the symbol $\braket{g}^a$ in a geometric signature abbreviates $\braket{g}, \overset{a}{\ldots}, \braket{g}$.
Similarly, if $m \geqslant2$ is an integer, then the symbol $m^a$ in a signature abbreviates $m, \overset{a}{\ldots}, m$.
\section{Geometric signature and analytic representation}\label{chp-dihedral_analytic_rep}

This section is devoted to study  the interplay between the analytic representation and the geometric signature of dihedral actions.

\subsection*{Preliminaries}
\label{sec:int_div}

For each $n \in \mathbb{Z}_{+}$ we write
\begin{equation}
    \mathbb{Z}^{|n} := \set{q \in \mathbb{Z}_+ : q\ \text{divides}\ n}.
\end{equation}
    
We say that $q \in \mathbb{Z}_+$ is a \emph{$k$-divisor} of $n$ if $q \in \mathbb{Z}^{|n} $  and $n/q$ is a product of exactly $k$ distinct prime numbers. We denote the set of all $k$-divisors of $n$ by
\begin{equation}
    \mathbb{Z}_k^{|n} = \set{q \in \mathbb{Z}^{|n}: q\ \text{is a}\ k\text{-divisor of}\ n}.
\end{equation}
By definition $\mathbb{Z}_0^{|n} = \set{n}$.

\begin{defi}\label{def-div_transform}
    Let $\Psi, \Phi: \mathbb{Z}_+ \to \mathbb{Z}$ be two functions.
    \begin{enumerate}
        \item The \emph{divisor transform} of $\Psi$ is the function $\widehat{\Psi}: \mathbb{Z}_+ \to \mathbb{Z}$ given by
        \begin{equation}\textstyle
            \widehat{\Psi}(n) = \sum_{q \in \mathbb{Z}^{|n}} \Psi(q).
        \end{equation}
        \item The \emph{inverse divisor transform} of $\Phi$ is the function $\widetilde{\Phi}: \mathbb{Z}_+ \to \mathbb{Z}$ given by
        \begin{equation}\textstyle
            \widetilde{\Phi}(n) = \sum_{k \geqslant 0} (-1)^k \sum_{q \in \mathbb{Z}_k^{|n}} \Phi(q).
        \end{equation} 
    \end{enumerate}
\end{defi}

\begin{rem} If the prime decomposition of $n$ is $p_1^{\alpha_1} \cdots p_r^{\alpha_r}$, then for each $1 \leqslant k \leqslant r$,
    \begin{equation}\textstyle
        \sum_{q \in \mathbb{Z}_k^{|n}} \Phi(q) = \sum_{1 \leqslant j_1 < \cdots < j_k \leqslant r} \Phi ( \frac{n}{p_{j_1} \cdots p_{j_k}} ).
    \end{equation}
    In particular, $\sum_{q \in \mathbb{Z}_0^{|n}} \Phi(q) = \Phi(n)$ and $\sum_{q \in \mathbb{Z}_r^{|n}} \Phi(q) = \Phi(\frac{n}{p_1\cdots p_r})$.
\end{rem}

\begin{lem}\label{lem-div_transform}
    Let $\Psi, \Phi:\mathbb{Z}^+\to\mathbb{Z}$ be two functions and let  $n \in \mathbb{Z}_+$. Then
     $$\widehat{\Phi}(q)=\Psi(q) \mbox{ for all }  q \in \mathbb{Z}^{|n} \,\, \iff \,\,  \widetilde{\Psi}(q)=\Phi(q) \mbox{ for all } q \in \mathbb{Z}^{|n}.$$
   
\end{lem}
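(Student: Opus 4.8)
The plan is to recognise both operations as instances of classical Möbius inversion on the divisor lattice of $n$, and to reduce the stated equivalence to the defining identity of the Möbius function. First I would rewrite the inverse divisor transform in closed form. Writing $n = p_1^{\alpha_1}\cdots p_r^{\alpha_r}$, an element $q \in \mathbb{Z}_k^{|n}$ is precisely one of the form $q = n/d$ where $d$ is a squarefree divisor of $n$ with exactly $k$ prime factors. Grouping the sum in Definition~\ref{def-div_transform} according to $d = n/q$, and recalling that the Möbius function satisfies $\mu(d) = (-1)^k$ for a squarefree $d$ with $k$ prime factors and $\mu(d) = 0$ otherwise, one obtains $\widetilde{\Phi}(n) = \sum_{d \mid n} \mu(d)\,\Phi(n/d)$. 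Likewise $\widehat{\Psi}(n) = \sum_{d \mid n} \Psi(d)$. The essential arithmetic input throughout is the standard identity $\sum_{d \mid m} \mu(d)$ equals $1$ when $m = 1$ and $0$ otherwise.

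For the forward implication, I would assume $\widehat{\Phi}(q) = \Psi(q)$ for every $q \in \mathbb{Z}^{|n}$, fix such a $q$, and compute $\widetilde{\Psi}(q) = \sum_{d \mid q} \mu(d)\,\Psi(q/d)$. Since $q/d$ divides $q$ and hence $n$, the hypothesis applies and I may substitute $\Psi(q/d) = \widehat{\Phi}(q/d) = \sum_{e \mid (q/d)} \Phi(e)$. Interchanging the order of summation over the pairs $(d,e)$ with $de \mid q$ yields $\widetilde{\Psi}(q) = \sum_{e \mid q} \Phi(e) \sum_{d \mid (q/e)} \mu(d)$, and the inner sum collapses by the Möbius identity, leaving only the term $e = q$ and giving $\widetilde{\Psi}(q) = \Phi(q)$. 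The converse is entirely symmetric: assuming $\widetilde{\Psi}(q) = \Phi(q)$ for all $q \in \mathbb{Z}^{|n}$, I would expand $\widehat{\Phi}(q) = \sum_{e \mid q} \Phi(e) = \sum_{e \mid q} \sum_{d \mid e} \mu(d)\,\Psi(e/d)$, reindex by $f = e/d$ so that the surviving pairs satisfy $df \mid q$, and again apply the Möbius identity to recover $\Psi(q)$.

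I would expect the only delicate point to be the bookkeeping that keeps the entire argument inside the divisor lattice of the fixed $n$. The lemma quantifies its hypothesis and conclusion over all $q \in \mathbb{Z}^{|n}$ rather than over all positive integers, so I must verify that every evaluation of $\widehat{\Phi}$ or $\widetilde{\Psi}$ at a divisor $q$ of $n$ invokes only values of the functions at divisors of $q$, which are themselves divisors of $n$. This transitivity of divisibility is exactly what makes the localised hypothesis as strong as required, so that both summation interchanges above are legitimate and no value outside $\mathbb{Z}^{|n}$ is ever needed. Beyond this localisation check, the proof is the standard two-line Möbius inversion and presents no genuine difficulty.
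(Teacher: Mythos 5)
Your proof is correct, and the key identification is valid: by Definition~\ref{def-div_transform}, $q \in \mathbb{Z}_k^{|n}$ precisely when $d = n/q$ is squarefree with exactly $k$ distinct prime factors, so the alternating double sum defining $\widetilde{\Phi}$ is indeed $\sum_{d \mid n} \mu(d)\,\Phi(n/d)$, and your localisation check (every evaluation occurs at a divisor of $q$, hence of $n$, by transitivity) is exactly what makes the hypothesis quantified only over $\mathbb{Z}^{|n}$ sufficient. Your route differs from the paper's in one substantive respect. The paper never names the Möbius function: its forward direction counts, for each divisor $t$ of $q$, how many times $\Phi(t)$ occurs in the alternating sum and collapses the count via $\sum_{k=0}^{r'} (-1)^k \binom{r'}{k} = 0$, which is your identity $\sum_{d \mid m} \mu(d) = 0$ (for $m > 1$) in disguise --- so the two forward arguments are essentially the same computation in different notation. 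For the converse, however, the paper runs a strong induction on the number of prime factors of $q$ counted with multiplicity, substituting $\widehat{\Phi}$ for $\Psi$ inside $\widetilde{\Psi}(q)$ and reusing the counting argument to isolate $\widehat{\Phi}(q) = \Psi(q)$, whereas you prove the converse by the same direct expansion-and-interchange as the forward direction, exploiting the symmetry of Möbius inversion (expand $\widehat{\Phi}(q)$, substitute $\Phi = \widetilde{\Psi}$, reindex over pairs $(d,f)$ with $df \mid q$, collapse the inner sum). Your version buys a shorter, induction-free, and visibly symmetric proof that connects the lemma to standard theory; the paper's version is self-contained and purely combinatorial, at the cost of an inductive scaffold and a converse that is structurally asymmetric to the forward half.
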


\begin{proof}
    $(\Rightarrow)$
    Let $q \in \mathbb{Z}^{|n}$ with prime decomposition $q = p_1^{\alpha_1} \cdots p_r^{\alpha_r}$ ($\alpha_j \geqslant 1$ for $1 \leqslant j \leqslant r$).
    For $k \geqslant 0$, the hypothesis implies that
    \begin{equation}\label{eq:sum_kdiv}\textstyle
        \sum_{s \in \mathbb{Z}_k^{|q}} \Psi(s) =
        \sum_{s \in \mathbb{Z}_k^{|q}} \widehat{\Phi}(s) =
        \sum_{s \in \mathbb{Z}_k^{|q}} \sum_{t' \in \mathbb{Z}^{|s}} \Phi(t').
    \end{equation}
    Note that if $t$ is not a divisor of $q$ then $\Phi(t)$ does not appear in \eqref{eq:sum_kdiv}.
    
    For $t \in \mathbb{Z}^{|q}$, let us write $q/t = p_1^{\beta_1} \cdots p_r^{\beta_r}$, where $0 \leqslant \beta_j \leqslant \alpha_j$ for $1 \leqslant j \leqslant r$.
    Observe that if $q/t$ has $r' = \# \set{\beta_j \geqslant 1}$ distinct prime factors then the number of times that $\Phi(t)$ appears in \eqref{eq:sum_kdiv} is the binomial coefficient $\binom{r'}{k}$.
    It follows that if $t$ is different from $q$ then $\Phi(t)$ appears
    \begin{equation}\textstyle
        \sum_{k=0}^{r'} (-1)^k \binom{r'}{k} = (1-1)^{r'} = 0
    \end{equation}
    times in the total sum $\widetilde{\Psi}(q) = \sum_{k=0}^r(-1)^k \sum_{s \in \mathbb{Z}_k^{|q}} \Psi(s)$.
    The term $\Phi(q)$ appears exactly once, which proves the desired result.

    \s

    $(\Leftarrow)$
    The proof is by strong induction on the number $N$ of prime factors (counting multiplicities) of $q \in \mathbb{Z}^{|n}$.
    The cases $N = 0,1$ hold trivially.
    Let us assume that we have proved the result for all divisors of $n$ with less than $N \geqslant 2$ prime factors, and choose $q \in \mathbb{Z}^{|n}$ with $N$ prime factors.
    By the assumption and the strong inductive hypothesis, we have that
    \begin{align}
        \Phi(q) = \widetilde{\Psi}(q)
        &= \textstyle \Psi(q) + \sum_{k=1}^r (-1)^k \sum_{s \in \mathbb{Z}_k^{|q}} \Psi(s) \\
        &= \textstyle \Psi(q) + \sum_{k=1}^r (-1)^k \sum_{s \in \mathbb{Z}_k^{|q}} \widehat{\Phi}(s) \\
        &= \textstyle \Psi(q) + \sum_{k=1}^r (-1)^k \sum_{s \in \mathbb{Z}_k^{|q}} \sum_{t' \in \mathbb{Z}^{|s}} \Phi(t'),\label{eq:sum_kdiv2}
    \end{align}
    where $r$ is the number of distinct prime factors of $q$.
    Using the same counting argument of the previous part of the proof, we conclude that for each proper divisor $t$ of $q$, $\Phi(t)$ appears
    \begin{equation}\textstyle
        \sum_{k=1}^r (-1)^k \binom{r}{k} = (1-1)^r - 1 = -1
    \end{equation}
    times in the sum $\eqref{eq:sum_kdiv2}$.
    It follows that $\widehat{\Phi}(q) = \Psi(q)$, as desired.
\end{proof}

\begin{pro}\label{pro-div_transform}
    $\Phi\mapsto\widehat{\Phi}$ and $\Phi\mapsto\widetilde{\Phi}$ are inverse operations.
\end{pro}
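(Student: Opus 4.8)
The plan is to deduce Proposition~\ref{pro-div_transform} directly from Lemma~\ref{lem-div_transform} by specializing the biconditional to every $n \in \mathbb{Z}_+$ simultaneously, rather than reproving anything by a fresh counting argument. The claim asserts that the two operators $\Phi \mapsto \widehat{\Phi}$ and $\Phi \mapsto \widetilde{\Phi}$ on the space of functions $\mathbb{Z}_+ \to \mathbb{Z}$ are mutually inverse, so I must check the two composite identities $\widetilde{\widehat{\Phi}} = \Phi$ and $\widehat{\widetilde{\Phi}} = \Phi$ as functions on all of $\mathbb{Z}_+$.

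First I would fix an arbitrary function $\Phi \colon \mathbb{Z}_+ \to \mathbb{Z}$ and set $\Psi := \widehat{\Phi}$, the divisor transform of $\Phi$. By definition this means $\widehat{\Phi}(q) = \Psi(q)$ holds for every $q \in \mathbb{Z}_+$, and in particular for every $q \in \mathbb{Z}^{|n}$ for each fixed $n$. Applying the forward implication $(\Rightarrow)$ of Lemma~\ref{lem-div_transform} then yields $\widetilde{\Psi}(q) = \Phi(q)$ for all $q \in \mathbb{Z}^{|n}$. Since every positive integer $q$ divides some $n$ (for instance $n = q$ itself, for which $q \in \mathbb{Z}^{|q}$), letting $n$ range over all of $\mathbb{Z}_+$ shows $\widetilde{\Psi} = \Phi$ as functions on $\mathbb{Z}_+$; that is, $\widetilde{\widehat{\Phi}} = \Phi$.

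Next I would argue the reverse composition symmetrically. Fix $\Phi$ and this time set $\Psi := \widetilde{\Phi}$, the inverse divisor transform. Then $\widetilde{\Phi}(q) = \Psi(q)$ holds for every $q$, so in particular the equality $\widetilde{\Psi}(q) = \Phi(q)$ in the role played by the right-hand side of Lemma~\ref{lem-div_transform}'s biconditional is satisfied; invoking the backward implication $(\Leftarrow)$ gives $\widehat{\Phi}(q) = \Psi(q)$, i.e. $\widehat{\widetilde{\Phi}} = \Phi$ for all $q \in \mathbb{Z}^{|n}$, and again letting $n$ vary over $\mathbb{Z}_+$ establishes the identity globally. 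Together these two computations show the operators are inverse to each other.

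The only point requiring care, and the one I would state explicitly, is the quantifier management: Lemma~\ref{lem-div_transform} is phrased relative to a fixed $n$ and the set of its divisors $\mathbb{Z}^{|n}$, whereas the Proposition concerns equality of functions on the whole of $\mathbb{Z}_+$. The resolution is the elementary observation that $\bigcup_{n \in \mathbb{Z}_+} \mathbb{Z}^{|n} = \mathbb{Z}_+$, so pointwise agreement on every $\mathbb{Z}^{|n}$ is equivalent to agreement everywhere. There is no genuine obstacle here; the substance of the result already resides in the counting identity $\sum_{k}(-1)^k\binom{r'}{k} = (1-1)^{r'}$ used to prove the Lemma, and the Proposition is a clean corollary packaging that Lemma as a statement about the invertibility of a pair of operators.
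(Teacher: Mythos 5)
Your overall route is exactly the paper's: the paper's entire proof of Proposition~\ref{pro-div_transform} is the single sentence that it follows directly from Lemma~\ref{lem-div_transform}, and your first paragraph carries this out correctly for the composite $\widetilde{\widehat{\Phi}} = \Phi$ (instantiate the lemma with $\Psi := \widehat{\Phi}$, so the left side of the biconditional is satisfied definitionally, then use $\bigcup_{n \in \mathbb{Z}_+} \mathbb{Z}^{|n} = \mathbb{Z}_+$; your quantifier remark is the right observation and is the only point the paper leaves tacit).

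Your second paragraph, however, mis-instantiates the lemma. Having set $\Psi := \widetilde{\Phi}$, the identity you actually have is $\widetilde{\Phi}(q) = \Psi(q)$, which is \emph{not} the right-hand side $\widetilde{\Psi}(q) = \Phi(q)$ of the lemma's biconditional: under your declared assignment that side reads $\widetilde{\widetilde{\Phi}} = \Phi$, which is unknown at this stage, and the backward implication would then deliver $\widehat{\Phi} = \widetilde{\Phi}$, which is neither known nor wanted. The repair is immediate and is clearly what you intended, since your concluding ``i.e.'' matches it: apply the lemma with the roles swapped, taking the lemma's $\Phi$ to be $\widetilde{\Phi}$ and the lemma's $\Psi$ to be the original $\Phi$. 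The right-hand side of the biconditional then becomes the tautology $\widetilde{\Phi}(q) = \widetilde{\Phi}(q)$, and the backward implication $(\Leftarrow)$ yields $\widehat{\widetilde{\Phi}}(q) = \Phi(q)$ for all $q \in \mathbb{Z}^{|n}$; letting $n$ vary gives $\widehat{\widetilde{\Phi}} = \Phi$ globally. With this bookkeeping corrected, your proof is complete and coincides with the paper's (implicit) argument; no new counting is needed beyond what the Lemma already contains.
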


\begin{proof} It follows directly from the previous lemma.
\end{proof}

\subsection*{Irreducible $\mathbb{C}$-representation of the dihedral group} Let $n \geqslant 2$ be an integer, and consider the dihedral group  $$\mathbf{D}_n = \braket{r,s \mid r^n= s^2= (sr)^2=1}$$of order $2n$.
The irreducible $\mathbb{C}$-representations of $\mathbf{D}_n$ are well-known. If $n$ is even then $\mathbf{D}_n$ has four $\mathbb{C}$-representations of degree one, given by$$\psi_1 : r \mapsto 1,\ s \mapsto 1; \,\,  
    \psi_2 : r \mapsto 1,\ s \mapsto -1; \,\, 
    \psi_3 : r \mapsto -1,\ s \mapsto 1; \,\,
    \psi_4 : r \mapsto -1,\ s \mapsto -1, $$and $(n-2)/2$ irreducible $\mathbb{C}$-representations of degree two, given by
\begin{equation}
    \rho^h: r \mapsto \operatorname{diag}(\omega^h, \overline{\omega}^h),\ s \mapsto \begin{psmallmatrix} 0 & 1 \\ 1 & 0 \end{psmallmatrix},
\end{equation}
where $\omega = \zeta_n = e^{2\pi i/n}$ and $1 \leqslant h \leqslant (n-2)/2$.

\s

If $n$ is odd then $\mathbf{D}_n$ has two $\mathbb{C}$-representations of degree one and $(n-1)/2$ irreducible $\mathbb{C}$-representations of degree two, given by $$\psi_1, \psi_2 \mbox{ and } \rho^h \mbox{ for } 1 \leqslant h \leqslant (n-1)/2.$$

\begin{rem}
    As the character field of each irreducible $\mathbb{C}$-representation of $\mathbf{D}_n$ is real, one has  $\rho_a \cong \overline{\rho_a}$ for each dihedral action.
    Moreover, $\rho_a$ is determined by the geometric signature of the action and
    \begin{equation}
        \braket{\rho_r, V} = 2 \braket{\rho_a, V} \text{ for each } V \in \operatorname{Irr}_\mathbb{C}(G).
    \end{equation}
\end{rem}

The following auxiliary lemma provides the value of the function $$\mathcal{N}: \operatorname{Irr}_\mathbb{C}(\mathbf{D}_n) \times \mathbf{D}_n \to \mathbb{Q}$$ introduced in \S\ref{chp:prelim}. Note that $\mathcal{N}(\psi_1, g) = 0$ for each $g \in \mathbf{D}_n$.

\begin{lem}\label{lem-CW_odd}
    Let $n\geqslant 2$ be an  integer and let $q \in \mathbb{Z}^{|n} - \set{1}$.
    Then the value of the function $\mathcal{N}: \operatorname{Irr}_\mathbb{C}(\mathbf{D}_n) \times \mathbf{D}_n \to \mathbb{Q}$ is summarized in the following tables.
\begin{enumerate}
    \item If $n$ is odd then $$\begin{array}{l|ccc}
        \mathcal{N} & s & r^{n/q} \\
        \hline
        \psi_1 & 0 & 0 \\
        \psi_2 & 1/2 & 0 \\
        \rho^h & 1/2 & \varepsilon
    \end{array}$$
    where $\varepsilon=0$ if $q$ divides $h$, and $\varepsilon=1$ otherwise.

    \item If $n$ is even then 
    \begin{equation}
    \begin{array}{l|cccc}
        \mathcal{N} & s & sr & r^{n/q} \\
        \hline
        \psi_1 & 0 & 0 & 0 \\
        \psi_2 & 1/2 & 1/2 & 0 \\
        \psi_3 & 0 & 1/2 & \delta/2 \\
        \psi_4 & 1/2 & 0 & \delta/2 \\
        \rho^h & 1/2 & 1/2 & \varepsilon
    \end{array}    
    \end{equation}
    where $\delta = 0$ if $2q$ divides $n$, and $\delta = 1$ otherwise; and $\varepsilon = 0$ if $q$ divides $h$ and $\varepsilon = 1$ otherwise.
    In particular, for $r^{n/2}$ we have that $\delta = 0$ if $n \in 4\mathbb{Z}$, and $\delta = 1$ otherwise; and $\varepsilon = 0$ if $h$ is even, and $\varepsilon = 1$ otherwise.
    \end{enumerate}
\end{lem}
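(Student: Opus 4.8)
The plan is to read the eigenvalues of $V(g)$ directly off the explicit matrices listed above, identify the order $|g|$ of each element $g$, and substitute into the defining sum $\mathcal{N}(V,g)=\sum_{\alpha=1}^{|g|} N_{g,\alpha}\tfrac{|g|-\alpha}{|g|}$. Two elementary remarks organize the whole computation: an eigenvalue $+1$ equals $\zeta_{|g|}^{|g|}$ and hence contributes a zero term, whereas $-1$ (which can only occur when $|g|$ is even) equals $\zeta_{|g|}^{|g|/2}$ and contributes $\tfrac{|g|-|g|/2}{|g|}=\tfrac12$. Also note that $r^{n/q}$ has order exactly $q$, since $r$ has order $n$ and $n/q$ divides $n$.

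First I would dispose of the degree-one representations. Since $\psi_1$ is trivial, every eigenvalue is $+1$ and $\mathcal{N}(\psi_1,g)=0$. For $\psi_2,\psi_3,\psi_4$ the value on $g$ is $\pm1$, so by the remark above each entry of the tables for the reflections $s$ and $sr$ is read off immediately from the definitions of the characters (e.g.\ $\psi_3(s)=1$ gives $0$, while $\psi_3(sr)=-1$ gives $\tfrac12$). The one point requiring care is $g=r^{n/q}$ for $\psi_3,\psi_4$: here the eigenvalue is $\psi_i(r)^{n/q}=(-1)^{n/q}$, which equals $+1$ exactly when $n/q$ is even, i.e.\ when $2q\mid n$, matching $\delta=0$. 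When $2q\nmid n$ the integer $n/q$ is odd, which forces $q$ to be even (because $n$ is even), so that $-1=\zeta_q^{q/2}$ is a genuine $q$-th root of unity and the term equals $\tfrac12=\delta/2$.

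Next I would treat the degree-two representations $\rho^h$. For the reflections, $\rho^h(s)=\begin{psmallmatrix}0&1\\1&0\end{psmallmatrix}$ and $\rho^h(sr)=\rho^h(s)\rho^h(r)=\begin{psmallmatrix}0&\overline{\omega}^h\\\omega^h&0\end{psmallmatrix}$ both have characteristic polynomial $\lambda^2-1$ (using $\omega^h\overline{\omega}^h=1$), hence eigenvalues $+1$ and $-1$; with $|s|=|sr|=2$ this yields $\mathcal{N}=0+\tfrac12=\tfrac12$. The remaining and most delicate computation is $g=r^{n/q}$, where $\rho^h(r^{n/q})=\operatorname{diag}(\zeta_q^{h},\zeta_q^{-h})$, using the reduction $\zeta_n^{hn/q}=\zeta_q^{h}$, and $|g|=q$. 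If $q\mid h$ both eigenvalues are $1$ and $\mathcal{N}=0$, giving $\varepsilon=0$. Otherwise, writing $h\equiv h'\pmod q$ with $0<h'<q$, the eigenvalues $\zeta_q^{h'}$ and $\zeta_q^{q-h'}$ contribute $\tfrac{q-h'}{q}$ and $\tfrac{h'}{q}$, which sum to $1=\varepsilon$.

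Finally, the ``in particular'' statements are the specialization $q=2$, for which $2q\mid n\iff 4\mid n$ and $q\mid h\iff h$ is even. I expect the only real obstacle to be the bookkeeping for the rotation $r^{n/q}$: correctly pinning down its order $q$, reducing the exponent $h$ modulo $q$ in the $\rho^h$ case, and in the $\psi_3,\psi_4$ case recognizing that $q$ is forced to be even precisely when needed so that $-1$ is expressible as a $q$-th root of unity. Everything else is a direct substitution into the definition of $\mathcal{N}$.
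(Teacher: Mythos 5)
Your proof is correct and is precisely the ``routine computation'' the paper invokes: it simply says the lemma ``follows directly from the definition of $\mathcal{N}$,'' and your argument carries out exactly that computation, including the two genuinely delicate points (that $r^{n/q}$ has order $q$, with the exponent $h$ reduced modulo $q$, and that when $n/q$ is odd the evenness of $n$ forces $q$ even, so $-1=\zeta_q^{q/2}$ contributes $\tfrac12$). Nothing is missing; you have supplied the details the paper omits.
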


\begin{proof}
It is a routine computation and follows directly from the definition of $\mathcal{N}.$ 
\end{proof}

\subsection*{Analytic representation formulas}\label{sec-dihedral_analytic}

Let $S$ be a compact Riemann surface of genus $g \geqslant 2$ endowed with a dihedral action represented by the ske $\theta: \Delta \to \mathbf{D}_n$.

The geometric signature of the action has the form
\begin{equation}
    (\gamma; \braket{s}^a, \braket{sr}^b, C_1, \ldots, C_v) \mbox{ for some integers } a,b \geqslant 0,
\end{equation}where  $C_j = \braket{r^{n/m_j}}$ is a cyclic subgroup of $\mathbf{D}_n$ of order $m_j \geqslant 2$.

\s

Observe that if $n$ is odd then the signature and the geometric signature encode the same information. In such a case $\braket{s} \sim \braket{sr}$ and the relevant number is $t := a+b$.

\begin{defi}
    
    The \emph{signature function} $\Psi_\theta: \mathbb{Z}_+ \to \mathbb{Z}$ of the action $\theta$ is given by
    \begin{equation}
        \Psi_\theta(q) = \# \set{ 1 \leqslant j \leqslant v: m_j=q}.
    \end{equation}
\end{defi}

Note that $\widehat{\Psi}_\theta(n) = v$ and $\widehat{\Psi}_\theta(1) = 0$.

\begin{lem}\label{lem-div_transform_signature_func}
    Let $\theta: \Delta \to \mathbf{D}_n$ be a ske, and let $q$ be a positive integer.
    The following statements hold.
    \begin{enumerate}
        \item $\widehat{\Psi}_\theta(q) = \widehat{\Psi}_\theta((n,q))$.
        \item $\widehat{\Psi}_\theta(n) = \widehat{\Psi}_\theta(\tfrac{n}{q})$ if and only if $\operatorname{lcm}(m_1, \ldots, m_v)$ divides $\frac{n}{q}$, for $q \in \mathbb{Z}^{|n}$.
        \item $\widehat{\Psi}_\theta(n) - \widehat{\Psi}_\theta(q)$ is the number of cyclic subgroups $C_j$ appearing in the geometric signature of the action such that $m_j$ does not divide $q$.
    \end{enumerate}
\end{lem}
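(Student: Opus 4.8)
The entire lemma will follow once I unwind the combinatorial meaning of the divisor transform of the signature function. The plan is to first establish that for every positive integer $q$,
$$
\widehat{\Psi}_\theta(q) = \#\set{1 \leqslant j \leqslant v : m_j \text{ divides } q};
$$
indeed, by Definition~\ref{def-div_transform} one has $\widehat{\Psi}_\theta(q) = \sum_{d \in \mathbb{Z}^{|q}} \Psi_\theta(d)$, and since $\Psi_\theta(d)$ counts the indices $j$ with $m_j = d$, summing over the divisors $d$ of $q$ collects precisely those $j$ for which $m_j \mid q$. I would immediately record the consequence $\widehat{\Psi}_\theta(n) = v$, which holds because each cyclic subgroup $C_j = \braket{r^{n/m_j}}$ has order $m_j$ and hence $m_j \mid n$.

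For part (1), I would invoke this last divisibility: since every $m_j$ divides $n$, the condition $m_j \mid q$ is equivalent to $m_j \mid (n,q)$ for the gcd $(n,q)$. Therefore the index sets $\set{j : m_j \mid q}$ and $\set{j : m_j \mid (n,q)}$ coincide, which gives $\widehat{\Psi}_\theta(q) = \widehat{\Psi}_\theta((n,q))$.

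Parts (2) and (3) then reduce to bookkeeping against the value $\widehat{\Psi}_\theta(n) = v$. For (2), the equality $\widehat{\Psi}_\theta(n) = \widehat{\Psi}_\theta(n/q)$ holds if and only if all $v$ indices satisfy $m_j \mid (n/q)$, which is exactly the statement that $\operatorname{lcm}(m_1, \ldots, m_v)$ divides $n/q$. For (3), subtracting the count $\widehat{\Psi}_\theta(q) = \#\set{j : m_j \mid q}$ from $\widehat{\Psi}_\theta(n) = v$ leaves precisely the number of indices $j$ with $m_j \nmid q$, that is, the number of cyclic subgroups $C_j$ whose order does not divide $q$.

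There is no serious obstacle here: the only genuine content is recognizing that the divisor transform of $\Psi_\theta$ counts the divisibility $m_j \mid q$, after which all three parts are essentially one-line consequences. The single point requiring care is the standing fact that every $m_j$ divides $n$ — this is what validates the gcd reduction in (1) and what pins down $\widehat{\Psi}_\theta(n) = v$ used throughout (2) and (3).
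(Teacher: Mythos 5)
Your proposal is correct and takes essentially the same route as the paper's proof: your counting identity $\widehat{\Psi}_\theta(q) = \#\set{j : m_j \mid q}$ is exactly the paper's observation that $\Psi_\theta(t)=0$ for $t \nmid n$, so that $\widehat{\Psi}_\theta(q)$ equals the sum of $\Psi_\theta$ over the common divisors of $q$ and $n$, i.e.\ over $\mathbb{Z}^{|(n,q)}$. Parts (2) and (3) are then dispatched by the same bookkeeping in both arguments, with the paper likewise reducing (2) to $\set{m_1,\ldots,m_v} \subset \mathbb{Z}^{|n/q}$ and treating (3) as immediate from the definition.
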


\begin{proof}
    The statement (3) follows directly from the definition of $\widehat{\Psi}_\theta$.
    If $t \in \mathbb{Z}_+$ does not divide $n$, then $\Psi_\theta(t) = 0$.
    It follows that $\widehat{\Psi}_\theta(q)$ is equal to the sum of $\Psi_\theta$ over the common divisors of $q$ and $n$.
    That is,
    \begin{equation}\textstyle
        \widehat{\Psi}_\theta(q) =
        \sum_{t\in\mathbb{Z}^{|q}} \Psi_\theta(t) =
        \sum_{t \in \mathbb{Z}^{|q} \cap \mathbb{Z}^{|n}} \Psi_\theta(t) =
        \sum_{t \in \mathbb{Z}^{|(n,q)}} \Psi_\theta(t) =
        \widehat{\Psi}_\theta((n,q)),
    \end{equation}
    proving (1).
    To prove (2), observe that $\widehat{\Psi}_\theta(n) = \widehat{\Psi}_\theta(\frac{n}{q})$ if and only if $\set{m_1, \ldots, m_v} \subset \mathbb{Z}^{|n/q}$.
    This in turn holds if and only if $\operatorname{lcm}(m_1, \ldots, m_v)$ divides $n/q$.
\end{proof}

Now, we determine the analytic representation explicitly in terms of the geometric signature of the action.

\begin{thm}\label{thm-dihedral_odd_formula}
    Let $n\geqslant 3$ be an odd integer, and let $S$ be a compact Riemann surface of genus $g \geqslant 2$ with dihedral action represented by the surface-kernel epimorphism $\theta: \Delta \to \mathbf{D}_n$.
    If the action has geometric signature $(\gamma; 2^t, m_1, \ldots, m_v)$, then its analytic representation is given by
    \begin{equation}
        \rho_a \cong \gamma \psi_1 \oplus \mu_2 \psi_2 \oplus \bigoplus_{h=1}^{(n-1)/2} \nu_h \rho^h,    \end{equation}where 
$\mu_2 = \gamma - 1 + \tfrac{1}{2}t$ and $$\nu_h = 2(\gamma-1) + \tfrac{1}{2}t + \widehat{\Psi}_\theta(n) - \widehat{\Psi}_\theta(h) \mbox{ for each }1 \leqslant h \leqslant \tfrac{n-1}{2}.$$
\end{thm}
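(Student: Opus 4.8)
The plan is to compute each multiplicity $\langle \rho_a, V\rangle$ directly via the Chevalley--Weil formula, using the fact (noted in the Remark) that for dihedral groups the analytic representation is fully determined by the geometric signature. Since $n$ is odd, the geometric signature $(\gamma; 2^t, m_1, \ldots, m_v)$ has exactly $t$ elliptic generators mapping to reflections (elements conjugate to $s$, as $\langle s\rangle \sim \langle sr\rangle$ when $n$ is odd) and $v$ elliptic generators $c_1, \ldots, c_v$ with $c_j$ a generator of $C_j = \langle r^{n/m_j}\rangle$, i.e. $c_j$ is conjugate to $r^{n/m_j}$. The trivial multiplicity $\langle \rho_a, \psi_1\rangle = \gamma$ is immediate from the Chevalley--Weil statement.

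First I would handle $\psi_2$. By the formula, $\langle \rho_a, \psi_2\rangle = d_{\psi_2}(\gamma-1) + \sum_j \mathcal{N}(\psi_2, c_j)$ with $d_{\psi_2}=1$. From Lemma~\ref{lem-CW_odd}(1), $\mathcal{N}(\psi_2, s) = 1/2$ while $\mathcal{N}(\psi_2, r^{n/q}) = 0$ for any $q$. Thus only the $t$ reflection generators contribute, each giving $1/2$, and the $v$ rotation generators contribute nothing. This yields $\mu_2 = (\gamma-1) + \tfrac{1}{2}t$, matching the claim.

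Next I would handle each $\rho^h$, which is the substantive case. Here $d_{\rho^h}=2$, so $\langle \rho_a, \rho^h\rangle = 2(\gamma-1) + \sum_{j} \mathcal{N}(\rho^h, c_j)$. The $t$ reflection generators each contribute $\mathcal{N}(\rho^h, s) = 1/2$ by the lemma, accounting for the $\tfrac{1}{2}t$ term. For the rotation generators, $\mathcal{N}(\rho^h, r^{n/m_j}) = \varepsilon$, which is $0$ if $m_j \mid h$ and $1$ otherwise. Hence the total rotation contribution is the number of indices $j$ with $m_j \nmid h$. The key observation is that, by Lemma~\ref{lem-div_transform_signature_func}(3) applied with $q = h$, this count equals $\widehat{\Psi}_\theta(n) - \widehat{\Psi}_\theta(h)$, precisely because $\widehat{\Psi}_\theta(n) - \widehat{\Psi}_\theta(h)$ is defined to be the number of subgroups $C_j$ in the geometric signature for which $m_j$ fails to divide $h$. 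Assembling the pieces gives $\nu_h = 2(\gamma-1) + \tfrac{1}{2}t + \widehat{\Psi}_\theta(n) - \widehat{\Psi}_\theta(h)$.

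The main obstacle, and the step requiring the most care, is the correct bookkeeping translating the Chevalley--Weil eigenvalue count into the divisor-transform language: one must verify that the $\varepsilon$-condition ``$q \mid h$'' in Lemma~\ref{lem-CW_odd} with $q = m_j$ aligns exactly with the divisibility condition ``$m_j \mid h$'' counted by Lemma~\ref{lem-div_transform_signature_func}(3), and that applying the latter lemma at the argument $h$ (rather than $n$) is legitimate even when $h$ is not a divisor of $n$ --- this is fine since $\widehat{\Psi}_\theta(h) = \widehat{\Psi}_\theta((n,h))$ by part (1) and the counting only depends on which $m_j$ divide $h$. A final consistency check is that the degrees add up correctly: $\gamma \cdot 1 + \mu_2 \cdot 1 + \sum_h \nu_h \cdot 2$ should equal $g = \dim \Omega^1(S)^*$, which one can confirm against the Riemann--Hurwitz formula, though this is an optional sanity check rather than part of the proof proper.
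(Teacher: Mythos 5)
Your proposal is correct and takes essentially the same approach as the paper: the paper's proof is exactly a direct application of the Chevalley--Weil formula combined with Lemma~\ref{lem-CW_odd} and Lemma~\ref{lem-div_transform_signature_func}, identifying the rotation-generator contribution to $\braket{\rho_a,\rho^h}$ with $\widehat{\Psi}_\theta(n)-\widehat{\Psi}_\theta(h)$. If anything, your write-up is more detailed than the paper's one-line proof, in particular by noting via Lemma~\ref{lem-div_transform_signature_func}(1) that evaluating $\widehat{\Psi}_\theta$ at $h$ is legitimate even when $h$ does not divide $n$.
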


\begin{proof}
    Observe that the sum $$\sum_{q \in \mathbb{Z}^{|n}} \Psi_\theta(q) \mathcal{N}(\rho^h, r^{n/q})$$ is the number of cyclic groups $C_j$ appearing in the geometric signature such that $m_j$ does not divide $h$.
    Thus, the proof of the theorem follows from Lemma~\ref{lem-CW_odd}, Lemma~\ref{lem-div_transform_signature_func} and the Chavelley-Weil formula.
\end{proof}

Observe that if $p \geqslant 3$ is prime and  $\mathbf{D}_p$ acts with signature $(\gamma; 2^t, p^l)$, then
\begin{equation}\label{eq:analytic_rep_prime_formula}
    \rho_a \cong \gamma \psi_1 \oplus (\gamma-1+\tfrac{1}{2}t) \psi_2 \oplus [2(\gamma-1)+\tfrac{1}{2}t + l] (\rho^1 \oplus \cdots \oplus \rho^{(p-1)/2})
\end{equation}

\begin{thm}\label{thm-dihedral_even_formula}
    Let $n\geqslant 2$ be an even integer, and let $S$ be a compact Riemann surface of genus $g \geqslant 2$ with a dihedral action represented by a surface-kernel epimorphism $\theta: \Delta \to \mathbf{D}_n$.
    If the action has geometric signature $(\gamma; \braket{s}^a, \braket{sr}^b, C_1, \ldots, C_v)$, then its analytic representation is given by
    \begin{equation}
        \rho_a \cong \gamma \psi_1 \oplus \mu_2 \psi_2 \oplus \mu_3 \psi_3 \oplus \mu_4 \psi_4 \oplus \bigoplus_{h=1}^{(n-2)/2} \nu_h \rho^h,
    \end{equation}
    where
    \begin{align}
        \mu_2 &=  \gamma - 1 + \tfrac{1}{2} (a+b), \\
        \mu_3 &=  \gamma - 1 + \tfrac{1}{2} [b+\widehat{\Psi}_\theta(n) - \widehat{\Psi}_\theta
        (\tfrac{n}{2})], \\
        \mu_4 &=  \gamma - 1 + \tfrac{1}{2} [a + \widehat{\Psi}_\theta(n) - \widehat{\Psi}_\theta(\tfrac{n}{2})], \\
        \nu_h &=  2(\gamma-1) + \tfrac{1}{2} (a+b) + \widehat{\Psi}_\theta(n) - \widehat{\Psi}_\theta(h),
    \end{align}
    for each $1 \leqslant h \leqslant (n-2)/2$.
\end{thm}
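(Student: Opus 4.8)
The plan is to mirror the proof of Theorem~\ref{thm-dihedral_odd_formula} and compute each multiplicity $\braket{\rho_a,V}$ separately by means of the Chevalley-Weil formula. Since $\rho_a \cong \bigoplus_V \braket{\rho_a,V}\,V$ as $V$ ranges over $\operatorname{Irr}_\mathbb{C}(\mathbf{D}_n) = \set{\psi_1,\psi_2,\psi_3,\psi_4,\rho^1,\ldots,\rho^{(n-2)/2}}$, it suffices to match the coefficients $\gamma,\mu_2,\mu_3,\mu_4$ and $\nu_h$ with the asserted expressions. Under $\theta$ the elliptic generators of $\Delta$ are sent to $a$ elements in the conjugacy class of $s$, to $b$ elements in the conjugacy class of $sr$, and, for each $1\leqslant j\leqslant v$, to a generator of a conjugate of $C_j = \braket{r^{n/m_j}}$. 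Because $\mathcal{N}(V,g)$ depends only on the eigenvalues of $V(g)$, the reflection contributions are read off directly from the class of $s$ or of $sr$ through the table in Lemma~\ref{lem-CW_odd}(2).

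For the trivial representation $\psi_1$ one has $\braket{\rho_a,\psi_1}=\gamma$ by definition. For $\psi_2$ the table gives $\mathcal{N}(\psi_2,s)=\mathcal{N}(\psi_2,sr)=\tfrac12$ and $\mathcal{N}(\psi_2,r^{n/q})=0$, so the rotations contribute nothing and Chevalley-Weil yields $\mu_2=(\gamma-1)+\tfrac12(a+b)$. For $\psi_3$ and $\psi_4$ the reflection terms are $\tfrac12 b$ and $\tfrac12 a$ respectively, since $\mathcal{N}(\psi_3,s)=\mathcal{N}(\psi_4,sr)=0$, while each rotation $r^{n/m_j}$ contributes $\delta_j/2$, where $\delta_j=0$ precisely when $2m_j\mid n$. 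The remaining task is to rewrite the rotation sums through the divisor transform $\widehat{\Psi}_\theta$: the quantity $\sum_j \delta_j$ counts the groups $C_j$ with $m_j\nmid \tfrac n2$, so by Lemma~\ref{lem-div_transform_signature_func}(3) applied with $q=\tfrac n2$ it equals $\widehat{\Psi}_\theta(n)-\widehat{\Psi}_\theta(\tfrac n2)$, giving $\mu_3$ and $\mu_4$. For $\rho^h$ one has $\mathcal{N}(\rho^h,s)=\mathcal{N}(\rho^h,sr)=\tfrac12$ and $\mathcal{N}(\rho^h,r^{n/m_j})=1$ exactly when $m_j\nmid h$; hence the rotation sum is the number of $C_j$ with $m_j\nmid h$, which by Lemma~\ref{lem-div_transform_signature_func}(3) with $q=h$ equals $\widehat{\Psi}_\theta(n)-\widehat{\Psi}_\theta(h)$. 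Combined with the factor $2(\gamma-1)$ this produces $\nu_h$.

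The point that deserves the most care—the main obstacle—is the claim, implicit in the statement, that $\rho_a$ depends only on the geometric signature and not on the full generating vector. A priori each rotation generator $\theta(x_j)$ is determined only up to a choice of generator $r^{k\,n/m_j}$ of $C_j$ with $\gcd(k,m_j)=1$, and $\mathcal{N}$ is sensitive to the actual eigenvalues. The crucial observation is that the eigenvalues of $\rho^h(r^{k\,n/m_j})$ are $\zeta_{m_j}^{\pm hk}$, so that $\mathcal{N}(\rho^h,r^{k\,n/m_j})$ equals $0$ when $m_j\mid h$ and equals $1$ otherwise, independently of $k$; the analogous independence holds for $\psi_3$ and $\psi_4$. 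This is exactly what makes each rotation contribution depend only on the divisibility data recorded by $C_j$, and hence only on the geometric signature, so the formulas above are well defined and the argument closes.
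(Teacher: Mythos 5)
Your proposal is correct and follows essentially the same route as the paper: the Chevalley--Weil formula combined with the $\mathcal{N}$-values of Lemma~\ref{lem-CW_odd}(2) and the counting identity of Lemma~\ref{lem-div_transform_signature_func}(3), exactly as in the paper's proof (which reduces to the odd case after noting that $\widehat{\Psi}_\theta(n)-\widehat{\Psi}_\theta(\tfrac{n}{2})$ counts the $C_j$ generated by odd powers of $r$, i.e.\ those with $m_j\nmid \tfrac{n}{2}$). Your explicit verification that $\mathcal{N}(V,r^{kn/m_j})$ is independent of the choice of generator $k$ with $\gcd(k,m_j)=1$ is a worthwhile detail the paper leaves implicit in the statement of Lemma~\ref{lem-CW_odd}, but it does not change the argument.
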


\begin{proof} After noticing that $\widehat{\Psi}_\theta(n) - \widehat{\Psi}_\theta(\tfrac{n}{2})$ is the number of cyclic subgroups $C_j$ appearing in the geometric signature generated by an odd power of $r$, the proof follows analogously as  the proof of Theorem \ref{thm-dihedral_odd_formula}.
\end{proof}

\subsection*{Remark: Topological actions and signatures} 

Let $S_j$ be a Riemann surface with an action $\varepsilon_j: G \to \operatorname{Aut}(S_j)$, for $j=1,2$.
The actions $\varepsilon_1$ and $\varepsilon_2$ are \emph{topologically equivalent} if there exist a group automorphism $\omega \in \operatorname{Aut}(G)$ and an orientation-preserving homeomorphism $T: S_1 \to S_2$ such that
\begin{equation}
    T \varepsilon_1(\omega(g)) = \varepsilon_2(g) T
\end{equation}
for all $g \in G$.
If $T$ is holomorphic then we speak of \emph{analytic equivalence}.

\s

Roughly speaking, the compact Riemann surfaces of a fixed genus that admit an action of the same group with the same topological class form an irreducible subvariety  of the moduli space of Riemann surfaces; see \cite{@Gonzalez-DiezHarvey92}.
The topological classification of  actions was the key ingredient  to provide a stratification of the moduli space of compact Riemann surfaces; see \cite{@Broughton90a}.

\begin{pro}\label{pro:topeq_geosig}
    If two actions are topologically equivalent, then their geometric signatures either agree or differ by an outer automorphism of $G$.
\end{pro}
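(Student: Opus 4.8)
The plan is to track how the intertwining homeomorphism transports stabilizer subgroups and to read the geometric signature off from this bookkeeping. Suppose $\varepsilon_1$ and $\varepsilon_2$ are topologically equivalent, witnessed by $\omega \in \operatorname{Aut}(G)$ and an orientation-preserving homeomorphism $T : S_1 \to S_2$ with $T\varepsilon_1(\omega(g)) = \varepsilon_2(g)T$ for all $g \in G$. First I would rewrite this as $\varepsilon_2(g) = T\varepsilon_1(\omega(g))T^{-1}$ and observe that, since $\omega$ is a bijection of $G$, letting $g$ range over $G$ shows that $T$ conjugates the subgroup $\varepsilon_1(G) \leqslant \operatorname{Aut}(S_1)$ onto $\varepsilon_2(G) \leqslant \operatorname{Aut}(S_2)$. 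Consequently $T$ carries $\varepsilon_1(G)$-orbits bijectively to $\varepsilon_2(G)$-orbits and descends to a homeomorphism $\bar T : S_1/\varepsilon_1(G) \to S_2/\varepsilon_2(G)$ between the two quotient surfaces. In particular the quotients have the same genus $\gamma$, which settles the first entry of the signature.

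The heart of the argument is the stabilizer identity. For $p \in S_1$ and $g \in G$, injectivity of $T$ together with the intertwining relation gives
\[
\varepsilon_2(g)(T(p)) = T(p) \iff T\big(\varepsilon_1(\omega(g))(p)\big) = T(p) \iff \varepsilon_1(\omega(g))(p) = p,
\]
so that $\operatorname{Stab}_{\varepsilon_2}(T(p)) = \omega^{-1}\big(\operatorname{Stab}_{\varepsilon_1}(p)\big)$. In particular $p$ is a ramification point of $\pi_{\varepsilon_1(G)}$ if and only if $T(p)$ is a ramification point of $\pi_{\varepsilon_2(G)}$, so $\bar T$ matches up the branch points on the two quotients bijectively; hence the two geometric signatures have the same length $v$. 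Moreover the conjugacy class of stabilizers attached to the orbit of $T(p)$ is exactly the image under $\omega^{-1}$ of the conjugacy class attached to the orbit of $p$, using that an automorphism sends a full conjugacy class of subgroups to a single conjugacy class of subgroups.

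Finally I would dispose of the two cases. Writing the geometric signature of $\varepsilon_1$ as $(\gamma; G_1, \ldots, G_v)$, the previous paragraph yields that the geometric signature of $\varepsilon_2$ is $(\gamma; \omega^{-1}(G_1), \ldots, \omega^{-1}(G_v))$. If $\omega$ is inner, then $\omega^{-1}$ maps every subgroup to a conjugate one and hence fixes each class $G_k$, so the signatures agree. If $\omega$ is outer, the signatures differ precisely by the permutation of conjugacy classes of subgroups induced by $\omega^{-1}$, which depends only on the image of $\omega$ in $\operatorname{Out}(G)$; this is what it means for the signatures to differ by an outer automorphism. The only genuinely delicate point is the conjugacy-class bookkeeping: stabilizers along a single orbit are only defined up to conjugacy, so one must check that $\omega^{-1}$ is well defined on conjugacy classes and that distinct orbits are not accidentally merged. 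Both facts follow from $T$ being a homeomorphism intertwining the two actions, and I expect this to be the main thing requiring care.
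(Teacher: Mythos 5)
Your proposal is correct and takes essentially the same route as the paper's proof: the decisive step in both is the pointwise stabilizer computation under the intertwining homeomorphism (in your notation, $\operatorname{Stab}_{\varepsilon_2}(T(p)) = \omega^{-1}\bigl(\operatorname{Stab}_{\varepsilon_1}(p)\bigr)$), after which inner automorphisms are absorbed because they preserve conjugacy classes of subgroups. The paper merely compresses this by first discarding the inner part of $\omega$ and then checking only that conjugation by $T$ preserves stabilizers, while you carry $\omega$ along explicitly and spell out the genus, orbit, and conjugacy-class bookkeeping; the underlying idea is identical.
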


\begin{proof}
    As the geometric signature is preserved under inner group automorphisms, we only need to verify that conjugation by orientation-preserving homeomorphisms does not affect the stabilizer groups. Let $T: S_1 \to S_2$ be an orientation-preserving homeomorphism, and let $\epsilon_j$ denote a $G$-action on $S_j$  such that $\epsilon_2(g) = T \circ \epsilon_1(g) \circ T^{-1}$ for all $g \in G$.
    We observe that, for $p \in S_1$, the automorphism $\epsilon_1(g)$ fixes $p$ if and only if $\epsilon_2(g) = T \circ \epsilon_1(g) \circ T^{-1}$ fixes $T(p)$, as desired.
\end{proof}

Since the analytic representation is determined by the geometric signature, we see that if $G$ does not have outer automorphisms, then the proposition above says that the analytic representation is constant over classes of topological equivalence of $G$-actions. In particular, we obtain the following corollary.

\begin{cor}

 Let $n \geqslant 3$ be an odd integer.
   
  \begin{enumerate} 
        \item Analytic representations of  $\mathbf{D}_n$-actions are constant over classes of topological equivalence.
        \item $\mathbf{D}_n$-actions that are topologically (hence, analytically) distinct with the same signature share the same analytic representation.
    \end{enumerate}
\end{cor}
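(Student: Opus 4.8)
The plan is to deduce both statements from Theorem~\ref{thm-dihedral_odd_formula}, which expresses $\rho_a$ solely in terms of the geometric signature, by showing that for odd $n$ this datum is insensitive to the two sources of ambiguity at play: outer automorphisms of $\mathbf{D}_n$ (relevant for (1)) and the passage from the bare signature to the geometric signature (relevant for (2)). The remark preceding the corollary already settles the case of groups without outer automorphisms; since $\mathbf{D}_n$ does have outer automorphisms for odd $n \geqslant 5$ (indeed $\operatorname{Out}(\mathbf{D}_n) \cong (\mathbb{Z}/n\mathbb{Z})^*/\set{\pm 1}$), the content of the corollary lies precisely in controlling their effect. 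Two elementary facts about $\mathbf{D}_n$ with $n$ odd drive everything: first, all reflections $sr^j$ are conjugate, so they constitute a single stabilizer class $\braket{s}$; second, for each $m \mid n$ the rotation subgroup $\braket{r^{n/m}}$ is the unique---hence normal---subgroup of $\braket{r}$ of order $m$.

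First I would establish the common core: for $n$ odd, every automorphism of $\mathbf{D}_n$ preserves the geometric signature. After composing with an inner automorphism, which leaves all conjugacy classes untouched, an arbitrary automorphism may be written as $\omega: r \mapsto r^k,\ s \mapsto s$ with $\gcd(k,n)=1$. Such $\omega$ fixes the reflection class $\braket{s}$, and sends $\braket{r^{n/m}}$ to $\braket{r^{kn/m}} = \braket{r^{n/m}}$, the last equality holding because $m \mid n$ forces $\gcd(k,m)=1$. Thus $\omega$ permutes generators within each cyclic rotation subgroup but fixes the subgroup itself, so the tuple of conjugacy classes comprising the geometric signature is unchanged. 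Part (1) is then immediate: by Proposition~\ref{pro:topeq_geosig} topologically equivalent actions have geometric signatures that either agree or differ by an outer automorphism, and the core shows the two cases coincide; Theorem~\ref{thm-dihedral_odd_formula} converts this into equality of analytic representations.

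For part (2) I would observe that, for $n$ odd, the bare signature already determines the geometric signature. Indeed, by the first fact an order-$2$ stabilizer can only be the reflection class $\braket{s}$, and by the second an order-$m$ stabilizer with $m \geqslant 3$ can only be $\braket{r^{n/m}}$; hence the multiset $(m_1,\ldots,m_v)$ recovers the list of conjugacy classes. Consequently two actions sharing a signature share a geometric signature, and Theorem~\ref{thm-dihedral_odd_formula} yields identical analytic representations regardless of whether the actions are topologically equivalent. There is no deep obstacle here; the only point requiring care---and the real content of the statement---is the explicit identification of $\operatorname{Out}(\mathbf{D}_n)$ and the verification that its representatives act on the rotation part by $r \mapsto r^k$ with $k$ coprime to $n$: it is exactly this rigidity, together with the conjugacy of all reflections, that is special to odd $n$ and makes the geometric signature an invariant of the abstract group action.
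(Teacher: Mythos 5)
Your proof is correct and follows essentially the route the paper intends: combine Proposition~\ref{pro:topeq_geosig} with Theorem~\ref{thm-dihedral_odd_formula}, the whole point being that for odd $n$ the geometric signature is insensitive to the two ambiguities in play. The paper states the corollary as an immediate consequence of the remark preceding it, leaning on its earlier observation that for odd $n$ the signature and the geometric signature encode the same information (which gives your part (2) at once, and part (1) because the signature is invariant under topological equivalence); your explicit check that, modulo inner automorphisms, every automorphism $r \mapsto r^k$, $s \mapsto s$ fixes the class $\braket{s}$ and each subgroup $\braket{r^{n/m}}$ is a worthwhile addition rather than a detour, since the remark's hypothesis that $G$ have no outer automorphisms does not literally apply here: $\operatorname{Out}(\mathbf{D}_n) \cong (\mathbb{Z}/n\mathbb{Z})^*/\set{\pm 1}$ is nontrivial for odd $n \geqslant 5$, so one genuinely needs either your computation or the signature-determines-geometric-signature observation to close the argument.
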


\subsection*{Geometric signature formulas} We have determined the analytic representation of a dihedral action in terms of its  geometric signature. Now, we deal with the converse problem.

\begin{defi}\label{def-pre_signature_func} Let $n \geqslant 2$ be an integer and
    let $V$ be a $\mathbb{C}$-representation of $\mathbf{D}_n$.
    The \emph{pre-signature function} $\Phi_V: \mathbb{Z}_+ \to \mathbb{Z}$ of $V$ is given by
    \begin{equation}
        \Phi_V(q) =
        \begin{cases}
            \braket{V, \rho^1} - \braket{V,  \psi_1 \oplus \psi_2} + 1 & \text{if}\ (n,q) = n \\
            \braket{V, \rho^1} - \braket{V, \psi_3 \oplus \psi_4} & \text{if}\ n\ \text{is even}\ \text{and}\ (n,q) = \tfrac{n}{2} \\
            \braket{V, \rho^1} - \braket{V, \rho^{(n,q)}} & \text{if}\ (n,q) < \tfrac{n}{2}
        \end{cases}
    \end{equation}
\end{defi}

The pre-signature function $\Phi_V$ is dual to  the signature function $\Psi_\theta$ in the following sense.

\begin{pro}\label{pro:presignature_transform}
    If $\rho_a$ is the analytic representation of a dihedral action represented by a ske $\theta: \Delta \to \mathbf{D}_n$, then 
    \begin{equation}
        \Psi_\theta = \widetilde{\Phi}_{\rho_a}
        \mbox{ and } \, \widehat{\Psi}_\theta = \Phi_{\rho_a}.
    \end{equation}
\end{pro}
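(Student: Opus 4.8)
The plan is to establish the single identity $\widehat{\Psi}_\theta = \Phi_{\rho_a}$ by direct substitution, and then obtain the other identity for free. Indeed, by Proposition~\ref{pro-div_transform} the divisor transform $\Phi \mapsto \widehat{\Phi}$ and the inverse divisor transform $\Phi \mapsto \widetilde{\Phi}$ are mutually inverse operations; hence, applying $\widetilde{(\cdot)}$ to both sides of $\widehat{\Psi}_\theta = \Phi_{\rho_a}$ yields $\Psi_\theta = \widetilde{\widehat{\Psi}}_\theta = \widetilde{\Phi}_{\rho_a}$, which is the first claimed equality. So there is really only one thing to prove.

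To prove $\widehat{\Psi}_\theta(q) = \Phi_{\rho_a}(q)$ for every $q \in \mathbb{Z}_+$, I would insert the explicit multiplicities $\braket{\rho_a, V}$ given by Theorem~\ref{thm-dihedral_odd_formula} (for $n$ odd) and Theorem~\ref{thm-dihedral_even_formula} (for $n$ even) into the defining cases of the pre-signature function of Definition~\ref{def-pre_signature_func}, and check that each case collapses to $\widehat{\Psi}_\theta((n,q))$, which equals $\widehat{\Psi}_\theta(q)$ by Lemma~\ref{lem-div_transform_signature_func}(1). Every cancellation is powered by two facts: that $\widehat{\Psi}_\theta(1) = 0$, and that $\widehat{\Psi}_\theta$ depends on its argument only through its greatest common divisor with $n$; the latter is precisely what aligns the case split of $\Phi_{\rho_a}$ (governed by whether $(n,q)$ equals $n$, equals $n/2$, or is smaller) with the values of $\widehat{\Psi}_\theta$.

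Concretely, when $(n,q) = n$ one has $\Phi_{\rho_a}(q) = \nu_1 - (\gamma + \mu_2) + 1$; substituting the formulas for $\nu_1$ and $\mu_2$, the genus terms and the half-integer terms $\tfrac12 t$ (resp.\ $\tfrac12(a+b)$) cancel, $\widehat{\Psi}_\theta(1)=0$ drops out, and what survives is exactly $\widehat{\Psi}_\theta(n)$. When $(n,q) < n/2$, writing $d = (n,q)$ one has $\Phi_{\rho_a}(q) = \nu_1 - \nu_d$, and the common terms of $\nu_1$ and $\nu_d$ telescope to leave $\widehat{\Psi}_\theta(d) - \widehat{\Psi}_\theta(1) = \widehat{\Psi}_\theta((n,q))$. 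In the even case there is the additional middle case $(n,q) = n/2$, where $\Phi_{\rho_a}(q) = \nu_1 - (\mu_3 + \mu_4)$; since $\mu_3 + \mu_4 = 2(\gamma-1) + \tfrac12(a+b) + \widehat{\Psi}_\theta(n) - \widehat{\Psi}_\theta(\tfrac{n}{2})$, everything cancels except $\widehat{\Psi}_\theta(\tfrac{n}{2})$.

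The computation is routine once the representation formulas are in place; the only delicate bookkeeping is to confirm that the index of the degree-two constituent $\rho^{(n,q)}$ appearing in the third case stays within its admissible range $1 \le h \le (n-1)/2$ (odd $n$) or $1 \le h \le (n-2)/2$ (even $n$), which holds precisely because that case requires $(n,q) < n/2$. For $n$ odd the middle case is vacuous, since every proper divisor of $n$ is at most $n/3 < n/2$, consistent with the absence of the characters $\psi_3,\psi_4$; so there the case split degenerates to two cases. This matching of the three-way definition of $\Phi_V$ with the arithmetic of $(n,q)$ is the one genuine subtlety, and it is what makes the definition of the pre-signature function exactly the right shape.
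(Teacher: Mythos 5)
Your proof is correct and takes essentially the same route as the paper's: both establish $\widehat{\Psi}_\theta = \Phi_{\rho_a}$ by substituting the multiplicity formulas of Theorems~\ref{thm-dihedral_odd_formula} and \ref{thm-dihedral_even_formula} into Definition~\ref{def-pre_signature_func}, use Lemma~\ref{lem-div_transform_signature_func}(1) to pass from $(n,q)$ to arbitrary $q$, and then obtain $\Psi_\theta = \widetilde{\Phi}_{\rho_a}$ from Proposition~\ref{pro-div_transform}. Your write-up simply makes explicit the cancellations (and the range check on $\rho^{(n,q)}$) that the paper records as a brief observation.
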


\begin{proof}
    Assume that $n$ is even.
    Observe that as a consequence of Theorem~\ref{thm-dihedral_even_formula} 
    \begin{equation}
        \begin{aligned}
            \widehat{\Psi}_\theta(n) &= \braket{\rho_a, \rho^1} -\braket{\rho_a, \psi_1 \oplus \psi_2} + 1, \\
            \widehat{\Psi}_\theta(\tfrac{n}{2}) &= \braket{\rho_a, \rho^1} - \braket{\rho_a, \psi_3 \oplus \psi_4}, \\
            \widehat{\Psi}_\theta(h) &= \braket{\rho_a, \rho^1} - \braket{\rho_a, \rho^h},
        \end{aligned}
    \end{equation}
    for $1 \leqslant h \leqslant (n-2)/2$.
    Besides, by Lemma~\ref{lem-div_transform_signature_func}, we see that
    \begin{equation}
        \widehat{\Psi}_\theta(q) = \widehat{\Psi}_\theta((n,q)) = \Phi_{\rho_a}(q) \quad \text{for}\ q \in \mathbb{Z}_+.
    \end{equation}
    We conclude by Proposition~\ref{pro-div_transform}.
    The proof for the case $n$ odd is analogous.
\end{proof}

An immediate consequence is that we obtain an explicit formula for the geometric signature of the action in terms of its analytic representation.

\begin{pro}\label{pro-dihedral_inverse_formula_odd}
    Let $n \geqslant 3$ be an odd integer, and let $S$ be a Riemann surface of genus $g \geqslant 2$ with a dihedral action represented by a ske $\theta: \Delta \to \mathbf{D}_n$.
   Assume that $$\mathbb{Z}^{|n}-\{1\} = \set{m_1, \ldots, m_v} \mbox{ and that } 1 < m_1 < \ldots < m_v.$$
    If $\rho_a$ is the analytic representation of the action, then the signature of the action
    \begin{equation}
        (\gamma; 2^t, m_1^{l_1}, \ldots, m_v^{l_v})
    \end{equation}
    is given by
    \begin{equation}
     \gamma = \braket{\rho_a, \psi_1}, \,\, 
            t = 2 \braket{\rho_a, \psi_2} - 2 \braket{\rho_a, \psi_1} + 2 \,\mbox{ and } \,
            l_j = \widetilde{\Phi}_{\rho_a}(m_j)\ \text{for}\ 1 \leqslant j \leqslant v.
    \end{equation}
\end{pro}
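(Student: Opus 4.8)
The plan is to read the three formulas directly off of Theorem~\ref{thm-dihedral_odd_formula} together with the duality recorded in Proposition~\ref{pro:presignature_transform}. Since $n$ is odd, the signature and the geometric signature carry exactly the same information (the reflection classes $\braket{s}$ and $\braket{sr}$ are conjugate and only the total $t = a+b$ is relevant), so Theorem~\ref{thm-dihedral_odd_formula} applies verbatim to the geometric signature $(\gamma; 2^t, m_1^{l_1}, \ldots, m_v^{l_v})$, now with the multiplicities recorded by the exponents $l_j$.

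First I would extract $\gamma$ and $t$. By Theorem~\ref{thm-dihedral_odd_formula}, the multiplicity of $\psi_1$ in $\rho_a$ equals $\gamma$, which gives $\gamma = \braket{\rho_a, \psi_1}$ immediately. The same theorem gives the multiplicity of $\psi_2$ as $\mu_2 = \gamma - 1 + \tfrac{1}{2}t$; solving this linear relation for $t$ and substituting $\gamma = \braket{\rho_a, \psi_1}$ yields $t = 2\braket{\rho_a, \psi_2} - 2\braket{\rho_a, \psi_1} + 2$, as claimed.

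For the exponents $l_j$, the key bookkeeping observation is that, by hypothesis, $m_1 < \cdots < m_v$ enumerate \emph{all} divisors of $n$ exceeding $1$. Hence $l_j$ is precisely the number of elliptic generators whose stabilizer has order $m_j$, which is $\Psi_\theta(m_j)$ by the definition of the signature function. Invoking Proposition~\ref{pro:presignature_transform}, which asserts $\Psi_\theta = \widetilde{\Phi}_{\rho_a}$, we conclude $l_j = \widetilde{\Phi}_{\rho_a}(m_j)$ for each $1 \leqslant j \leqslant v$. I do not expect a substantial obstacle: the mathematical content is already packaged in the preceding results, and the only point demanding care is the identification $l_j = \Psi_\theta(m_j)$, which relies on the enumeration hypothesis that every divisor of $n$ greater than $1$ occurs (possibly with exponent $0$) in the list.
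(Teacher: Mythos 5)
Your proposal is correct and follows essentially the same route as the paper: the formulas for $\gamma$ and $t$ are read off from Theorem~\ref{thm-dihedral_odd_formula}, and $l_j = \widetilde{\Phi}_{\rho_a}(m_j)$ follows from the identification $l_j = \Psi_\theta(m_j)$ (valid because $n$ odd forces all order-$2$ periods to come from reflections, so the listed $m_j$'s exhaust the cyclic-stabilizer orders) together with $\Psi_\theta = \widetilde{\Phi}_{\rho_a}$ from Proposition~\ref{pro:presignature_transform}. If anything, your write-up states the final chain of equalities more cleanly than the paper's own displayed implication, which garbles the roles of $l_j$ and $m_j$.
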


\begin{proof}
    The formulas for $\gamma$ and $t$ are a direct consequence of Theorem~\ref{thm-dihedral_odd_formula}.
    Besides, by definition, $l_j = \Psi_\theta(m_j)$ for $1 \leqslant j \leqslant v$. It follows that $$l_j = \Psi_\theta(m_j) \implies  \widetilde{\Phi}_{\rho_a}(l_j) = \widetilde{\Phi}_{\rho_a}(\Psi_\theta(m_j))=m_j,$$where the last equality holds by Proposition~\ref{pro:presignature_transform}.
\end{proof}

\begin{pro}\label{pro-dihedral_inverse_formula_even}
    Let $n \geqslant 2$ be an even integer, and let $S$ be a Riemann surface of genus $g \geqslant 2$ with a dihedral action represented by a ske $\theta: \Delta \to \mathbf{D}_n$.
     Assume that $$\mathbb{Z}^{|n} -\{1\}= \set{m_1, \ldots, m_v} \mbox{ and that } 1 < m_1 < \ldots < m_v.$$
    If $\rho_a$ is the analytic representation of the action, then the geometric signature of the action
    \begin{equation}
        (\gamma; \braket{s}^a, \braket{sr}^b, \braket{r^{n/m_1}}^{l_1}, \ldots, \braket{r^{n/m_v}}^{l_v})
    \end{equation}
    is given by
    \begin{equation}
            \gamma = \braket{\rho_a, \psi_1}, \,\, 
            a = \braket{\rho_a, \psi_2 \oplus \psi_4} - \braket{\rho_a, \psi_1 \oplus \psi_3} + 1, \,\,
            b = \braket{\rho_a, \psi_2 \oplus \psi_3} - \braket{\rho_a, \psi_1 \oplus \psi_4} + 1,    \end{equation} 
       and $l_j = \widetilde{\Phi}_{\rho_a}(m_j)$ for $1 \leqslant j \leqslant v.$
\end{pro}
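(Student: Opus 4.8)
The plan is to run the even-case analogue of the argument used for Proposition~\ref{pro-dihedral_inverse_formula_odd}, inverting the formulas of Theorem~\ref{thm-dihedral_even_formula}. First I would read off $\gamma = \braket{\rho_a, \psi_1}$ immediately, since the trivial representation $\psi_1$ contributes multiplicity $\gamma$ in the Chevalley--Weil computation underlying that theorem.

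The heart of the matter is to recover $a$ and $b$ from the degree-one multiplicities. Writing $P := \widehat{\Psi}_\theta(n) - \widehat{\Psi}_\theta(\tfrac{n}{2})$ for the number of cyclic subgroups in the geometric signature generated by an odd power of $r$, and $\mu_i := \braket{\rho_a, \psi_i}$, Theorem~\ref{thm-dihedral_even_formula} gives $\mu_2 = (\gamma-1) + \tfrac12(a+b)$, $\mu_3 = (\gamma-1) + \tfrac12(b+P)$, and $\mu_4 = (\gamma-1) + \tfrac12(a+P)$. The key observation is that the combination $\mu_2 + \mu_4 - \mu_3$ simultaneously cancels $b$ and the nuisance term $P$, leaving $a = \mu_2 + \mu_4 - \mu_3 - (\gamma-1)$; substituting $\gamma = \mu_1$ and regrouping yields exactly $a = \braket{\rho_a, \psi_2 \oplus \psi_4} - \braket{\rho_a, \psi_1 \oplus \psi_3} + 1$. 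The symmetric combination $\mu_2 + \mu_3 - \mu_4$ cancels $a$ and $P$ and produces the claimed formula for $b$. This is the only genuinely computational step, and the point to get right is that the extra unknown $P$ does not obstruct the inversion precisely because it enters $\mu_3$ and $\mu_4$ with the same coefficient.

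For the multiplicities $l_j$ I would argue as in the odd case. Under the stated indexing convention $\mathbb{Z}^{|n} - \{1\} = \set{m_1 < \cdots < m_v}$, which lists every divisor of $n$ exceeding $1$ (so that some $l_j$ may vanish), the definition of the signature function gives $l_j = \Psi_\theta(m_j)$. Proposition~\ref{pro:presignature_transform} identifies $\Psi_\theta = \widetilde{\Phi}_{\rho_a}$, whence $l_j = \widetilde{\Phi}_{\rho_a}(m_j)$, completing the proof. I expect no serious obstacle here: the entire content is already packaged in Theorem~\ref{thm-dihedral_even_formula} and Proposition~\ref{pro:presignature_transform}, and the only care required is in choosing the linear combinations that decouple $a$ and $b$ from each other and from the odd-power count $P$.
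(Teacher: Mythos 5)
Your proposal is correct and takes essentially the same route as the paper, which dispatches this proposition as ``analogous to'' the odd case---that is, precisely the inversion of Theorem~\ref{thm-dihedral_even_formula} for $\gamma$, $a$, $b$ together with the identification $\Psi_\theta = \widetilde{\Phi}_{\rho_a}$ from Proposition~\ref{pro:presignature_transform} for the $l_j$. Your explicit check that the combinations $\mu_2+\mu_4-\mu_3$ and $\mu_2+\mu_3-\mu_4$ cancel both the other reflection count and the term $\widehat{\Psi}_\theta(n)-\widehat{\Psi}_\theta(\tfrac{n}{2})$ merely spells out the computation the paper leaves implicit.
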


\begin{proof}
 The proof is analogous to the one of the      previous proposition.
\end{proof}

After verifying that the analytic representation formulas (Theorems \ref{thm-dihedral_odd_formula} and \ref{thm-dihedral_even_formula}) and the geometric signature formulas (Propositions \ref{pro-dihedral_inverse_formula_odd} and \ref{pro-dihedral_inverse_formula_even}) are inverse formulas, we have the proof of the following result.

\begin{thm}\label{thm:bij_geosig_anarep}
    There is a bijective correspondence between geometric signatures and analytic representations of dihedral actions on  Riemann surfaces of genus $g \geqslant 2$.
\end{thm}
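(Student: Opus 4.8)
The plan is to exhibit the forward and backward formulas developed above as mutually inverse maps, and to deduce the bijection from this. Fix $n \geqslant 2$ and let $\mathcal{S}_n$ denote the set of geometric signatures of $\mathbf{D}_n$-actions on Riemann surfaces of genus $g \geqslant 2$, and $\mathcal{R}_n$ the set of analytic representations of such actions. I would first note that both data are genuinely attached to $\mathbf{D}_n$: an analytic representation is a $\mathbb{C}$-representation of $\mathbf{D}_n$, and the cyclic groups occurring in a geometric signature are subgroups of $\mathbf{D}_n$. Hence the claimed correspondence is a statement for each fixed $n$, and it suffices to treat the odd and even cases separately.

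I would then define $F \colon \mathcal{S}_n \to \mathcal{R}_n$ by the analytic representation formulas of Theorem~\ref{thm-dihedral_odd_formula} (for $n$ odd) and Theorem~\ref{thm-dihedral_even_formula} (for $n$ even); this is well defined, since those formulas produce $\rho_a$ from the numerical data $\gamma$, $t$ (respectively $a,b$) and the signature function $\Psi_\theta$ alone. Dually, I would define $B \colon \mathcal{R}_n \to \mathcal{S}_n$ by the geometric signature formulas of Proposition~\ref{pro-dihedral_inverse_formula_odd} (for $n$ odd) and Proposition~\ref{pro-dihedral_inverse_formula_even} (for $n$ even); this is well defined because every expression on the right-hand side is written through the inner products $\braket{\rho_a, \cdot}$, hence depends only on $\rho_a$.

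The core of the argument is to verify $B \circ F = \operatorname{id}_{\mathcal{S}_n}$ and $F \circ B = \operatorname{id}_{\mathcal{R}_n}$. For $B \circ F$, I start with a geometric signature $\sigma$, pass to $\rho_a = F(\sigma)$, and check that $B$ returns $\sigma$: the genus $\gamma = \braket{\rho_a, \psi_1}$ and the parameters $t$ (respectively $a,b$) are recovered immediately by comparison with the degree-one multiplicity formulas, while the cyclic part is recovered from $l_j = \widetilde{\Phi}_{\rho_a}(m_j) = \Psi_\theta(m_j)$, which is precisely Proposition~\ref{pro:presignature_transform} combined with the inversion $\widetilde{\widehat{\Psi}} = \Psi$ of Proposition~\ref{pro-div_transform}. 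For $F \circ B$, I start with $\rho_a \in \mathcal{R}_n$, the analytic representation of an action with some geometric signature $\sigma'$; Propositions~\ref{pro-dihedral_inverse_formula_odd}--\ref{pro-dihedral_inverse_formula_even} give $B(\rho_a) = \sigma'$, and Theorem~\ref{thm-dihedral_odd_formula} or \ref{thm-dihedral_even_formula} gives $F(\sigma') = \rho_a$, so $F(B(\rho_a)) = \rho_a$. Since $F$ and $B$ are mutually inverse, each is a bijection.

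The step I expect to require the most care is not the recovery of $\gamma$, $t$, $a$, $b$, which is a direct reading-off from the degree-one multiplicities, but the round-trip of the cyclic subgroup data encoded in the multiplicities $l_j$. This is where the nontrivial algebra lives: the analytic representation records, through $\braket{\rho_a, \rho^h}$, the cumulative quantities $\widehat{\Psi}_\theta(h) = \Phi_{\rho_a}(h)$, and one recovers the individual multiplicities $\Psi_\theta(m_j)$ only after applying the inverse divisor transform. The fact that $\Phi \mapsto \widehat{\Phi}$ and $\Phi \mapsto \widetilde{\Phi}$ are inverse (Proposition~\ref{pro-div_transform}) is therefore the keystone that makes both compositions collapse to the identity; once this is in place, the verification reduces to matching the two sets of explicit formulas case by case.
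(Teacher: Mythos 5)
Your proposal is correct and takes essentially the same route as the paper: the paper proves Theorem~\ref{thm:bij_geosig_anarep} precisely by noting that the analytic representation formulas (Theorems~\ref{thm-dihedral_odd_formula} and \ref{thm-dihedral_even_formula}) and the geometric signature formulas (Propositions~\ref{pro-dihedral_inverse_formula_odd} and \ref{pro-dihedral_inverse_formula_even}) are mutually inverse, with the divisor-transform inversion (Proposition~\ref{pro-div_transform}, applied through Proposition~\ref{pro:presignature_transform}) carrying the cyclic-subgroup data exactly as you identify. You have simply made explicit the well-definedness of the two maps and the two compositions that the paper leaves as a one-line verification.
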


\section{Existence of dihedral actions}
\label{chp:existence}

Bujalance, Cirre, Gamboa and Gromadzki in \cite{@BujalanceEtAl03} studied dihedral actions and provided necessary and  sufficient conditions for a signature to admit a surface-kernel epimorphism onto a dihedral group. It is worth emphasizing that their results provide sufficient, but not necessary, conditions under which there exist actions with a given geometric signature.

\s

In this section we provide a refinement of their results from signatures to geometric signatures.
We then apply our results to address the problem of deciding when a $\mathbb{C}$-representation is the analytic representation of a dihedral action.

\subsection*{Geometric signature realization} Since for dihedral groups of order $2n$ with $n$ odd, signature and geometric signature are equivalent, hereafter in this section we assume $n$ to be even.

\s

Let $\Delta$ be a Fuchsian group of signature
$$(\gamma; 2^{a+b}, m_1, \ldots, m_v) \mbox{ with } m_j \in \mathbb{Z}^{|n} - \set{1}$$
canonically presented by hyperbolic generators $\alpha_1, \beta_1, \ldots, \alpha_\gamma, \beta_\gamma$, elliptic generators $x_1, \ldots, x_{a+b}$, $y_1, \ldots, y_v$, and relations
\begin{equation}\label{eq:fuchsian_def_rel}\textstyle
    x_i^2 = y_j^{m_j} = \prod_{t=1}^\gamma [\alpha_t, \beta_t] \prod_{k=1}^{a+b} x_k \prod_{l=1}^v  y_l = 1,
\end{equation}
for $i = 1, \ldots, a+b$ and $j = 1, \ldots, v$.
Assume that there exists a ske $\theta: \Delta \to \mathbf{D}_n$ that represents an action with geometric signature
\begin{equation}
    (\gamma; \braket{s}^a, \braket{sr}^b, C_1, \ldots, C_v),
\end{equation}
where $C_j = \braket{r^{n/m_j}}$ is a cyclic subgroup of order $m_j \geqslant 2$. Without loss of generality, we can assume  that
\begin{equation}
    \theta(x_k) =
    \begin{cases}
        sr^\text{even}, & 1 \leqslant k \leqslant a, \\
        sr^\text{odd}, & a+1 \leqslant k \leqslant a+b,
    \end{cases}
\end{equation}
and $\theta(y_j) = (r^{n/m_j})^{q_j}$ with $(q_j, m_j) = 1$ for $1 \leqslant j \leqslant v$.
We define
 $$A:= \# \set{1 \leqslant j \leqslant v : n/m_j\ \text{is  odd}} \, \mbox{ and } \, B := \# \set{1 \leqslant j \leqslant v : n/2m_j\ \text{is  odd}}.$$

\begin{lem}\label{lem:dihedral_epi}
    With the previous notations.
    \begin{enumerate}
        \item There are integers $\xi_1$, $\xi_2$ and $\xi_3$ such that
        \begin{equation}\label{eq:dihedral_epi_rel1}\textstyle
            \prod_{t=1}^\gamma \theta([\alpha_t, \beta_t]) = r^{2\xi_1},\
            \prod_{j=1}^{a+b} \theta(x_j) = r^{\xi_2},\ 
            \prod_{j=1}^{v} \theta(y_j) = r^{\xi_3},
        \end{equation}
        and $2\xi_1 + \xi_2 + \xi_3 \in n \mathbb{Z}$.
        Concretely, $\xi_3 = \sum_{j=1}^v nq_j/m_j$.
        \item The integers $a$, $b$, $\xi_2$, $\xi_3$ and $A$ have the same parity;
        $a+b$ is even.
        \item If $A=0$ then every integer  $nq_j/m_j$ is even.
        In this case, if $n \in 4\mathbb{Z}$ then the integers $\xi_3/2$ and $B$ have the same parity.
    \end{enumerate}
\end{lem}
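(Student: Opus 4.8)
\emph{Proof plan.} The plan is to route every parity and rotation-vs-reflection computation through the two surjective homomorphisms $\mathbf{D}_n \to \mathbb{Z}/2\mathbb{Z}$ that exist precisely because $n$ is even, namely the additive versions of the degree-one representations $\psi_2$ and $\psi_3$. Call them the \emph{reflection parity} $\pi$, with $\pi(r^i)=0$ and $\pi(sr^i)=1$, and the \emph{rotation parity} $\phi$, with $\phi(r^i)=\phi(sr^i)=i \bmod 2$; the latter is well defined only because $\phi(r^n)=n\equiv 0$. For (1), I would first note that any homomorphism into an abelian group kills commutators, so $\pi$ and $\phi$ both vanish on each $\theta([\alpha_t,\beta_t])$. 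Since $\pi(c)=0$ forces $c$ to be a rotation and then $\phi(c)=0$ forces its exponent to be even, each $\theta([\alpha_t,\beta_t])$ lies in $\langle r^2\rangle$, hence so does the product, giving $\prod_t\theta([\alpha_t,\beta_t])=r^{2\xi_1}$. Because $\theta(y_j)=r^{nq_j/m_j}$, the product $\prod_j\theta(y_j)$ is the rotation $r^{\xi_3}$ with $\xi_3=\sum_j nq_j/m_j$. Substituting these into the image of the defining relation \eqref{eq:fuchsian_def_rel} and applying $\pi$ shows that $\prod_k\theta(x_k)$ must be a rotation $r^{\xi_2}$; the relation itself then reads $r^{2\xi_1+\xi_2+\xi_3}=1$, i.e. $2\xi_1+\xi_2+\xi_3\in n\mathbb{Z}$.

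For (2), evaluating the image of the relation under $\pi$ leaves only the $a+b$ reflection factors $\theta(x_k)$, giving $a+b\equiv 0\pmod 2$, so $a+b$ is even and $a\equiv b$. Evaluating under $\phi$, the commutator and relation factors give $2\xi_1+\xi_2+\xi_3\equiv 0$, hence $\xi_2\equiv\xi_3$; moreover $\phi$ applied directly to $\prod_k\theta(x_k)=r^{\xi_2}$ computes $\xi_2\equiv b \pmod 2$, since the first $a$ reflections have even exponent and the last $b$ have odd exponent. Thus $a\equiv b\equiv\xi_2\equiv\xi_3\pmod 2$, and it remains to identify this common parity with $A$. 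Writing $\xi_3=\sum_j (n/m_j)q_j$, the summand $(n/m_j)q_j$ is odd iff $n/m_j$ is odd, and I would observe that in that case $m_j$ must absorb the entire $2$-part of the even integer $n$, so $m_j$ is even and its coprime companion $q_j$ is odd; hence each summand contributes the indicator of ``$n/m_j$ odd'', yielding $\xi_3\equiv A\pmod 2$.

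For (3), if $A=0$ then $n/m_j$ is even for every $j$, so each $nq_j/m_j=(n/m_j)q_j$ is even. Assuming further $n\in 4\mathbb{Z}$, every $2m_j$ divides $n$, so $\xi_3$ is even and $\xi_3/2=\sum_j (n/2m_j)q_j$ is an integer. I would then rerun the same parity bookkeeping one level down: the summand $(n/2m_j)q_j$ is odd iff $n/2m_j$ is odd, which forces the $2$-adic valuation of $m_j$ to equal $v_2(n)-1\geqslant 1$, so again $m_j$ is even and $q_j$ is odd. Consequently each summand contributes the indicator of ``$n/2m_j$ odd'', and $\xi_3/2\equiv B\pmod 2$.

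The step I expect to be the main obstacle is the parity transfer from the $q_j$-dependent exponents to the $q_j$-free counts $A$ and $B$, which is needed in both (2) and (3). The key arithmetic observation resolving it, applied twice, is that whenever $n/m_j$ (respectively $n/2m_j$) is odd, the divisor $m_j$ is forced to be even, so the condition $(q_j,m_j)=1$ makes $q_j$ odd and $q_j$ drops out of the parity count; everything else is a formal evaluation of $\pi$ and $\phi$ on the defining relation.
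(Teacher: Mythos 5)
Your proposal is correct and follows essentially the same route as the paper: the paper invokes the derived subgroup $[\mathbf{D}_n,\mathbf{D}_n]=\braket{r^2}$ and counts $sr^{\text{odd}}$ factors directly, which is exactly what your homomorphisms $\pi$ and $\phi$ compute, and your parity transfer to $A$ and $B$ (via ``$n/m_j$ odd, resp.\ $n/2m_j$ odd, forces $m_j$ even, hence $q_j$ odd by $(q_j,m_j)=1$'') is precisely the paper's key observation, including the correct use of $n\in 4\mathbb{Z}$ in part (3). The only blemish is cosmetic: in (3) the divisibility $2m_j\mid n$ and the evenness of $\xi_3$ already follow from $A=0$ alone, not from the additional hypothesis $n\in 4\mathbb{Z}$, which is needed only for the step $v_2(m_j)=v_2(n)-1\geqslant 1$.
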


\begin{proof}
    {(1)} We have that $\prod_{j=1}^v \theta(y_j) = r^{\xi_3}$ with $\xi_3 = \sum_{j=1}^v nq_j/m_j$.
    Since the derived subgroup of $\mathbf{D}_n$ is $\braket{r^2}$, it follows that $\prod_{t=1}^\gamma \theta([\alpha_t, \beta_t]) = r^{2\xi_1}$ for some $\xi_1 \in \mathbb{Z}$.
    Thus, the  relation \eqref{eq:fuchsian_def_rel}  implies that $\prod_{j=1}^{a+b} \theta(x_j) = r^{\xi_2}$ for $\xi_2 \in \mathbb{Z}$ and $2\xi_1 + \xi_2 + \xi_3 \in n \mathbb{Z}$.
    
    \s
    
    {(2)} As $\prod_{j=1}^{a+b} \theta(x_j) = r^{\xi_2}$, $a$ and $b$ must have the same parity.
    The parity of $\xi_2$ is determined by the number of terms of the form $sr^\text{odd}$ in the product $\prod_{j=1}^{a+b} \theta(x_j)$.
    Concretely, $\xi_2$ is odd if and only if $b$ is odd.
    By the relation above $\xi_2$ and $\xi_3$ must have the same parity.
    Now, observe that $\xi_3 = \sum_{j=1}^v nq_j/m_j$ is odd if and only if there is an odd number of integers $1 \leqslant j \leqslant v$ such that $nq_j/m_j$ is odd.
    Since $(q_j, m_j) = 1$, $nq_j/m_j$ is odd if and only if $n/m_j$ is odd.
    We conclude that $\xi_3$ and $A$ have the same parity.
    
    \s
    
    {(3)} Assume that $A = 0$.
    Thus, every integer  $n/m_j$ is even and the same holds for $nq_j/m_j$.
    The integer $\xi_3/2 = \sum_{j=1}^v nq_j/2m_j$ is odd if and only if there is an odd number of integers $1 \leqslant j \leqslant v$ such that $nq_j/2m_j$ is odd.
    Now, assume that $n \in 4 \mathbb{Z}$.
    To conclude it suffices to show that $nq_j/2m_j$ is odd if and only if $n/2m_j$ is odd.
    Indeed, if $n/2m_j$ is odd then $m_j$ is even, hence $q_j$ is odd and so is $nq_j/2m_j$.
    The converse is direct.
\end{proof}

\begin{thm}\label{thm-existence_even_0}
    Let $n \geqslant 2$ be an even integer and let $\gamma = 0$.
    Then necessary and sufficient conditions for the existence of a surface-kernel epimorphism $\theta: \Delta \to \mathbf{D}_n$ of geometric signature $(0; \braket{s}^a, \braket{sr}^b, C_1, \ldots, C_v)$ are
    \begin{enumerate}
        \item $a$, $b$ and $A$ have the same parity,
        \item $a+b \geqslant 2$ is even,
        \item if $a+b=2$ then $\operatorname{lcm}(m_1, \ldots, m_v)=n$, and
        \item if $a+b>2$, and $a=0$ or $b=0$ then $A>0$.
    \end{enumerate}
\end{thm}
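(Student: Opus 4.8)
The plan is to separate necessity and sufficiency, relying on Lemma~\ref{lem:dihedral_epi} for the arithmetic constraints and on a direct construction for the converse.

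For necessity, condition (1) and the evenness in (2) are exactly Lemma~\ref{lem:dihedral_epi}(2). The bound $a+b\geqslant 2$ comes from surjectivity: if $a+b=0$ then every generator maps into $\braket{r}$, so the image is cyclic and proper. For (3) and (4) I would analyze the rotation subgroup $R:=\theta(\Delta)\cap\braket{r}$, which must equal $\braket{r}$ for $\theta$ to be onto. Since $\theta(y_j)=(r^{n/m_j})^{q_j}$ with $(q_j,m_j)=1$, the images of the $y_j$ generate exactly $\braket{r^{n/m_1},\dots,r^{n/m_v}}=\braket{r^{n/\operatorname{lcm}(m_1,\dots,m_v)}}$. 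When $a+b=2$, the relation \eqref{eq:dihedral_epi_rel1} forces the single rotation $\theta(x_1)\theta(x_2)=r^{-\xi_3}$ to already lie in this subgroup, so $R=\braket{r^{n/\operatorname{lcm}(m_j)}}$ and surjectivity gives $\operatorname{lcm}(m_j)=n$, which is (3). When $a+b>2$ with $a=0$ or $b=0$, all reflections lie in a single conjugacy class, so their pairwise products are even powers of $r$; if moreover $A=0$, then every $n/m_j$ is even and hence every contribution to $\xi_3$ is even, so $R\subseteq\braket{r^2}$ and the image sits inside the proper index-two subgroup $\braket{r^2,sr}$ (or $\braket{r^2,s}$), contradicting surjectivity. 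This yields (4).

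For sufficiency I would construct $\theta$ explicitly. Set $\theta(y_j)=r^{n/m_j}$, so $q_j=1$ and $\xi_3=\sum_j n/m_j$; assign $a$ of the involutions $x_k$ to reflections $sr^{\mathrm{even}}$ and the remaining $b$ to $sr^{\mathrm{odd}}$, writing $\theta(x_k)=sr^{e_k}$ with $e_k$ of the prescribed parity. Using $\prod_k sr^{e_k}=r^{\sum_k(-1)^k e_k}$ (valid since $a+b$ is even), the defining relation reduces to the single congruence $\sum_k(-1)^k e_k\equiv-\xi_3\pmod n$. Because altering any $e_k$ by an even amount preserves its parity and shifts the left side through all residues of the correct parity, and condition (1) guarantees both sides share that parity, this congruence is always solvable. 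It then remains to choose the $e_k$ so that $\operatorname{Im}(\theta)=\mathbf{D}_n$, and to check that the prescribed geometric signature is realized (the two reflection types give the classes $\braket{s}$ and $\braket{sr}$, and $\theta(y_j)$ generates $C_j$).

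The surjectivity verification is the crux and splits into three regimes. If $a+b=2$, condition (3) makes the $y_j$ alone generate $\braket{r}$, and a reflection completes the group. If $a,b\geqslant 1$, I would choose one even-type and one odd-type reflection with exponents differing by $1$, producing $r^{\pm1}\in\operatorname{Im}(\theta)$ and hence all rotations, while the surplus involutions (at least two more, since $a+b\geqslant 4$) absorb the congruence. If $a=0$ or $b=0$ with $A>0$, I would take two same-type reflections with exponents differing by $2$ to obtain $r^2$, and combine it with the odd rotation $r^{n/m_{j_0}}$ guaranteed by $A>0$, generating $\braket{r}$ via $\gcd(2,n/m_{j_0})=1$. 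The main obstacle is precisely this last regime together with the bookkeeping ensuring that, after fixing the exponents used to force surjectivity, enough free involutions remain to satisfy the congruence; handling it amounts to checking that all parity constraints imposed by (1) and by Lemma~\ref{lem:dihedral_epi} are mutually consistent with the choices made.
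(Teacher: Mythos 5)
Your proposal is correct and follows essentially the same route as the paper: necessity via Lemma~\ref{lem:dihedral_epi} together with the same subgroup-containment arguments (the image of the reflections landing in $\braket{s,r^2}$ when $a=0$ or $b=0$, and $\theta(\Delta)=\braket{sr^k, r^{n/\operatorname{lcm}(m_j)}}$ when $a+b=2$), and sufficiency by an explicit construction of generating vectors. The only difference is packaging: where the paper writes down eight explicit generating vectors (halved by the outer automorphism $r\mapsto r$, $s\mapsto sr$), you solve the single parity-constrained congruence $\sum_k(-1)^k e_k\equiv-\xi_3 \pmod n$ uniformly and verify surjectivity in three regimes, which is a sound and slightly more systematic presentation of the same argument.
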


\begin{proof}
    We start by proving that the conditions are necessary.
    \begin{enumerate}
        \item This has already been proved in Lemma~\ref{lem:dihedral_epi}.
        \item Assume that $a+b<2$.
        Then $a=b=0$ and $\theta(\Delta) \leqslant \braket{r}$, a contradiction with the surjectivity of $\theta$.
        \item Assume that $a+b=2$.
        By Lemma~\ref{lem:dihedral_epi} we have that $\braket{\theta(x_1), \theta(x_2)} = \braket{sr^k, r^{\xi_3}}$ for some $k \in \mathbb{Z}$ and $\xi_3 = \sum_{j=1}^v nq_j/m_j$.
        Observe that $$\theta(\Delta) = \braket{sr^k, r^{n/\operatorname{lcm}(m_1, \ldots, m_v}}.$$
        Since $\theta$ is onto it follows that $\operatorname{lcm}(m_1, \ldots, m_v) = n$.
        \item Assume that $a+b>2$, and $a=0$ or $b=0$.
        Since $a$ and $b$ have the same parity, either $a \geqslant 4$ and $b=0$, or $b \geqslant 4$ and $a=0$.
        If $a \geqslant 4$ and $b=0$ then $$\braket{\theta(x_1), \ldots, \theta(x_a)} \leqslant \braket{s, r^2}.$$
        Since $\theta$ is onto, there must exist some $\theta(y_j) = r^{nq_j/m_j}$ with $n/m_j$ odd, and therefore $A \neq 0$. 
        The same argument holds for $a=0$ and $b \geqslant 4$.
    \end{enumerate}

    We now show that the conditions are sufficient.
    Let $a$ and $b$ be two nonnegative integers as in Condition (2).
    Then, one of the following cases occurs:
    \begin{multicols}{2}
    \begin{enumerate}[label=(\roman*)]
        \item $a, b \geqslant 2$ are even,
        \item $a \geqslant 3$ and $b \geqslant 1$ are odd,
        \item $a \geqslant 1$ and $b \geqslant 3$ are odd,
        \item $a \geqslant 4$ is even and $b = 0$,
        \item $a = 0$ and $b \geqslant 4$ is even,
        \item $a = 2$ and $b = 0$,
        \item $a = 0$ and $b = 2$,
        \item $a = b = 1$.
    \end{enumerate}
    \end{multicols}
    Set $\xi_3 = \sum_{j=1}^v n/m_j$.
    The following tuples are generating vectors for each one of the indicated cases:
    \begin{enumerate}
        \item[(i)] $(s, \overset{a}{\ldots}, s, sr, \overset{b-1}{\ldots}, sr, sr^{1-\xi_3}, r^{n/m_1}, \ldots, r^{n/m_v})$,
        \item[(ii)] $(s, \overset{a-1}{\ldots}, s, sr^{1+\xi_3}, sr, \overset{b}{\ldots}, sr, r^{n/m_1}, \ldots, r^{n/m_v})$,
        \item[(iv)] $(s, \overset{a-2}{\ldots}, s, sr^2, sr^{2-\xi_3}, r^{n/m_1}, \ldots, r^{n/m_v})$,
        \item[(vi,viii)] $(s, sr^{-\xi_3}, r^{n/m_1}, \ldots, r^{n/m_v})$.
    \end{enumerate}
    The remaining cases follow after considering the outer automorphism of $\mathbf{D}_n$ given by $r\mapsto r$, $s \mapsto sr$.
\end{proof}

\begin{thm}\label{thm-existence_even_>0}
    Let $n \geqslant 2$ be an even integer and let $\gamma>0$.
    Then necessary and sufficient conditions for the existence of a surface-kernel epimorphism $\theta: \Delta \to \mathbf{D}_n$ of geometric signature $(\gamma; \braket{s}^a, \braket{sr}^b, C_1, \ldots, C_v)$ are
    \begin{enumerate}
        \item $a$, $b$, and $A$ have the same parity ($a+b$ is even), and
        \item if $\gamma=1$ and $a=b=0$ then $\operatorname{lcm}(m_1, \ldots, m_v)=n$ or $n/2$. In the latter case, if $n \in 4\mathbb{Z}$ then $B$ is odd.
    \end{enumerate}
\end{thm}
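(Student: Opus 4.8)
My plan is to reduce the whole question to the arithmetic of the product relation
$$\textstyle\prod_{t=1}^\gamma \theta([\alpha_t,\beta_t]) \prod_{k=1}^{a+b} \theta(x_k) \prod_{l=1}^{v} \theta(y_l) = 1$$
together with surjectivity of $\theta$, and to isolate the one genuinely delicate case $\gamma=1$, $a=b=0$, where surjectivity is actually constrained. Throughout I write $L=\operatorname{lcm}(m_1,\ldots,m_v)$ and recall that the rotations $\theta(y_l)$ generate $\langle r^{n/L}\rangle$.

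\textbf{Necessity.} Condition (1) is exactly Lemma~\ref{lem:dihedral_epi}(2), so nothing new is needed. For condition (2) I would assume $\gamma=1$, $a=b=0$, so the relation reads $[\theta(\alpha_1),\theta(\beta_1)]=r^{-\xi_3}$ with $\xi_3=\sum_l nq_l/m_l$ even (since $A$ is even). As every $\theta(y_l)$ is a rotation, surjectivity forces at least one of $\theta(\alpha_1),\theta(\beta_1)$ to be a reflection. A short split (one or two reflections) shows, via the commutator formulas $[r^c,sr^d]=r^{2c}$ and $[sr^c,sr^d]=r^{2(d-c)}$, that the rotation subgroup of the image equals $\langle r^{\gcd(e,\,n/L)}\rangle$ for an integer $e$ with $2e\equiv -\xi_3\pmod n$; surjectivity is then $\gcd(e,n/L)=1$. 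Setting $D=n/L$ and using that each $n/m_l$ is divisible by $D$, I get $D\mid\xi_3$, hence $D\mid 2e$, and combined with $\gcd(e,D)=1$ this forces $D\in\{1,2\}$, i.e. $L\in\{n,n/2\}$. Finally, if $L=n/2$ then $D=2$ is even, which forces $A=0$; if moreover $n\in 4\mathbb{Z}$, then $e$ odd and $2e\equiv -\xi_3\pmod 4$ give $\xi_3/2$ odd, and Lemma~\ref{lem:dihedral_epi}(3) yields $B$ odd.

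\textbf{Sufficiency.} Assuming (1) and (2), I would produce explicit generating vectors, using the single observation that taking $\theta(\alpha_1)=r$ already makes the rotation subgroup of the image full. If $a+b\geq 2$, take $\theta(\alpha_1)=r$, $\theta(\beta_1)=s$, all other hyperbolic images trivial, and $q_l=1$; then $\langle r,s\rangle=\mathbf{D}_n$ gives surjectivity, and the relation is satisfied by tuning the exponents of the reflections $\theta(x_k)$ so that $\prod_k\theta(x_k)=r^{-2-\xi_3}$, which is possible because the product of the (even number of) reflections realizes every rotation of parity $b\equiv A\equiv\xi_3\pmod 2$. If $a=b=0$ and $\gamma\geq 2$, use two hyperbolic pairs: $\theta(\alpha_1)=r$, $\theta(\beta_1)=s$ and $\theta(\alpha_2)=r^m$, $\theta(\beta_2)=s$ with $2m\equiv -2-\xi_3\pmod n$ (solvable since $\xi_3$ is even), the rest trivial. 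If $a=b=0$ and $\gamma=1$, set $\theta(\alpha_1)=r^e$, $\theta(\beta_1)=s$, $q_l=1$, and choose $e$ with $2e\equiv -\xi_3\pmod n$ and $\gcd(e,n/L)=1$: condition (2) guarantees such $e$ exists, trivially if $L=n$, by a parity choice of lift if $L=n/2$ and $n\notin 4\mathbb{Z}$, and because $B$ odd forces $\xi_3/2$ odd (so $e=-\xi_3/2$ is odd) if $L=n/2$ and $n\in 4\mathbb{Z}$.

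\textbf{Main obstacle.} The first two sufficiency cases are routine once $\theta(\alpha_1)=r$ trivializes surjectivity. The crux is the special case $\gamma=1$, $a=b=0$: here surjectivity is genuinely coupled to the commutator relation, and the heart of the matter is the elementary but essential arithmetic step that $D\mid 2e$ together with $\gcd(e,D)=1$ forces $D\in\{1,2\}$, followed by the parity bookkeeping (through Lemma~\ref{lem:dihedral_epi}(3)) that converts ``$\xi_3/2$ odd'' into ``$B$ odd'' when $n\in 4\mathbb{Z}$. Keeping the equivalences among the parities of $A$, $\xi_3$, $B$ and the divisibility $D\mid\xi_3$ aligned is where the care lies.
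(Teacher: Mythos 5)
Your proposal is correct and follows essentially the same route as the paper: condition (1) via Lemma~\ref{lem:dihedral_epi}, necessity of (2) by analyzing the commutator relation in the critical case $\gamma=1$, $a=b=0$ (your gcd computation $D\mid 2e$, $\gcd(e,D)=1\Rightarrow D\in\{1,2\}$ is the same content as the paper's observation that $\braket{s,r^{n/L}}$ must have index $1$ or $2$, and the parity bookkeeping through Lemma~\ref{lem:dihedral_epi}(3) is identical), and sufficiency by explicit generating vectors with the same case split, your choice of lift of $e$ modulo $n/2$ playing exactly the role of the paper's $\xi_1=(\delta n+\xi_3)/2$. The only cosmetic differences are that you avoid the paper's outer-automorphism normalization $\theta(\alpha_1)=s$ by treating both reflection configurations directly, and you use a single uniform $(r,s;\ldots)$-type vector with tuned reflection exponents where the paper lists three tailored tuples.
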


\begin{proof}
    Let us prove that the conditions are necessary.
    Condition (1) has already been proved in Lemma~\ref{lem:dihedral_epi}.
    In order to show Condition (2), assume that $\gamma=1$ and $a = b = 0$.
    As $\theta$ is onto at least one of the hyperbolic generators of $\Delta$ must be sent to $\mathbf{D}_n \setminus \braket{r}$.
    After considering an outer automorphism of $\mathbf{D}_n$ of the form $r \mapsto r$, $s \mapsto sr^i$, we can assume that $\theta(\alpha_1) = s$.
    Note that $\theta(\beta_1) = sr^{\xi_1}$ or $r^{-\xi_1}$ for some $\xi_1 \in \mathbb{Z}$.
    In each case, $\theta([\alpha_1, \beta_1]) = r^{2\xi_1}$.
    By Lemma~\ref{lem:dihedral_epi}, the integers $\xi_1$ and $\xi_3 = \sum_{j=1}^v nq_j/m_j$ satisfy the relation
    \begin{equation}\label{eq:xi1_xi3_rel}
        2\xi_1 + \xi_3 \in n \mathbb{Z}.
    \end{equation}
    Thus, $$r^{2\xi_1} \in \braket{\theta(y_1), \ldots, \theta(y_v)} = \braket{r^{n/\operatorname{lcm}(m_1, \ldots, m_v}}.$$
    Surjectivity requires that $\braket{s, r^{\xi_1}, r^{n/\operatorname{lcm}(m_1, \ldots, m_v)}} = \mathbf{D}_n$, and hence $\braket{s, r^{n/\operatorname{lcm}(m_1, \ldots, m_v)}}$ has index 1 or 2 in $\mathbf{D}_n$. Equivalently,
    \begin{equation}
        \operatorname{lcm}(m_1, \ldots, m_v) = n\ \text{or}\ n/2.
    \end{equation}
    
    Assume that $\operatorname{lcm}(m_1, \ldots, m_v) = n/2$.
    It follows that each $n/m_j$ is even ($A=0$) and $\xi_1$ is odd 
    (otherwise $\theta(\Delta) = \braket{s, r^{\xi_1}, r^2} \neq \mathbf{D}_n$).
    Now, assume that $n \in 4\mathbb{Z}$.
    Relation \eqref{eq:xi1_xi3_rel} turns into $ \xi_1 + \xi_3/2 \in \tfrac{n}{2} \mathbb{Z}.$ Since $\xi_1$ is odd and $n/2$ is even, $\xi_3/2$ must be odd.
    We conclude by Lemma~\ref{lem:dihedral_epi}.

    \s

    Now, we show that the conditions are sufficient.
    Let $a$ and $b$ be nonnegative integers as in Condition (1).
    Then, one of the following cases occurs:
    \begin{multicols}{2}
    \begin{enumerate}[label=(\roman*)]
        \item $\gamma \geqslant 2$;
        \item $\gamma = 1$, $a \neq 0$;
        \item $\gamma = 1$, $b \neq 0$;
        \item $\gamma = 1$, $a = b = 0$.
    \end{enumerate}
    \end{multicols}
    
    
    Set $\xi_2 = 0$ if $b$ is even and $\xi_2 = 1$ if $b$ is odd.
    We also set $\xi_3 = \sum_{j=1}^v n/m_j$.
    The following tuples are generating vectors for the indicated cases:
    \begin{enumerate}
        \item[(i)] $(s, r^{(\xi_2+\xi_3)/2}, r, \overset{2\gamma-2}{\ldots}, r; s, \overset{a}{\ldots}, s, sr, \overset{b}{\ldots}, sr, r^{n/m_1}, \ldots, r^{n/m_v})$,
        \item[(ii)] $(sr, r^{(\xi_2+\xi_3)/2}; s, \overset{a}{\ldots}, s, sr, \overset{b}{\ldots}, sr, r^{n/m_1}, \ldots, r^{n/m_v})$,
        \item[(iv)] $(s, r^{\xi_1}; r^{n/m_1}, \ldots, r^{n/m_v})$ with $\xi_1 = (\delta n + \xi_3)/2$ for $\delta \in \set{0,1}$.
    \end{enumerate}
    (We can always choose $\delta$ so that the tuple above is a generating vector.)
    Case (iii) follows after considering the outer automorphism $r \mapsto r$, $s \mapsto sr$.
\end{proof}

\subsection*{Analytic representation criteria}

The following results answer the question of when a given $\mathbb{C}$-representation is the analytic representation of a dihedral action. In order to state them, we consider the support $$\operatorname{Supp} \Psi = \set{ q \in \mathbb{Z}_+: \Psi(q) \neq 0}$$  of the function $\Psi: \mathbb{Z}_+ \to \mathbb{Z}$.

\begin{thm}\label{thm-dihedral_rep_existence_odd}
    Let $n \geqslant 3$ be odd and let $V$ be a  $\mathbb{C}$-representation  of $\mathbf{D}_n.$ Then $V$ is equivalent to the analytic representation of a $\mathbf{D}_n$-action if and only if the following statements hold.
    \begin{enumerate}
        \item $\braket{V, \psi_2} + 1 \geqslant \braket{V, \psi_1}$.
        \item $\widetilde{\Phi}_V(q) \geqslant 0$ for each $q \in \mathbb{Z}^{|n} - \set{1}$.
        \item $\braket{V, \rho^h} = \braket{V, \rho^{(n,h)}}$ for $1 \leqslant h \leqslant (n-1)/2$.
        \item if $\braket{V, \psi_1} \leqslant 1$ and $\braket{V, \psi_2} = 0$ then $\operatorname{lcm}(\operatorname{Supp} \widetilde{\Phi}_V) = n$.
    \end{enumerate}
   Furthermore, if we write
   $$\mathbb{Z}^{|n}-\{1\} = \set{m_1, \ldots, m_v} \mbox{ with } 1 < m_1 < \ldots < m_v,$$
   then the corresponding action has signature
    \begin{equation}
        (\braket{V, \psi_1}; 2^t, m_1^{l_1}, \ldots, m_v^{l_v}),
    \end{equation}
    where $t = 2(\braket{V, \psi_2} - \braket{V, \psi_1} + 1)$ and $l_j = \widetilde{\Phi}_V(m_j)$ for $j = 1, \ldots, v$.
\end{thm}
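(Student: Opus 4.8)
The plan is to reduce the statement, via the bijection of Theorem~\ref{thm:bij_geosig_anarep} and the inverse formulas of Proposition~\ref{pro-dihedral_inverse_formula_odd}, to a combinatorial question about the existence of a surface-kernel epimorphism onto $\mathbf{D}_n$ with a prescribed signature. Given $V$, I would set $\gamma = \braket{V,\psi_1}$, $t = 2(\braket{V,\psi_2}-\braket{V,\psi_1}+1)$ and $l_j = \widetilde{\Phi}_V(m_j)$, and show that $V$ is an analytic representation if and only if the tuple $(\gamma; 2^t, m_1^{l_1}, \ldots, m_v^{l_v})$ is the signature of an actual $\mathbf{D}_n$-action. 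Observe that $t$ is even by construction; this is forced for odd $n$ because every elliptic generator of order $m_j>2$ maps into the rotation subgroup $\braket{r}$ while the $t$ involutions map to reflections, so the product-one relation, read in $\mathbf{D}_n/\braket{r}\cong\mathbb{Z}/2$, becomes the parity condition on $t$.

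For the necessity of (1)--(4) I would start from an action realizing $V$. Proposition~\ref{pro:presignature_transform} gives $l_j = \Psi_\theta(m_j) = \widetilde{\Phi}_V(m_j)\geqslant 0$, which is (2); the relation $\mu_2 = \gamma - 1 + \tfrac{1}{2}t$ of Theorem~\ref{thm-dihedral_odd_formula} together with $t\geqslant 0$ yields (1); and applying Lemma~\ref{lem-div_transform_signature_func}(1) to $\nu_h = 2(\gamma-1)+\tfrac12 t + \widehat\Psi_\theta(n) - \widehat\Psi_\theta(h)$ gives $\widehat\Psi_\theta(h) = \widehat\Psi_\theta((n,h))$, hence $\braket{V,\rho^h} = \braket{V,\rho^{(n,h)}}$, which is (3). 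Finally, the hypotheses $\braket{V,\psi_1}\leqslant 1$ and $\braket{V,\psi_2}=0$ force $(\gamma,t)\in\{(0,2),(1,0)\}$; in both of these tight regimes the existence of a generating vector collapses to the requirement that the rotation-type elliptic images generate all of $\braket{r}$, i.e. $\operatorname{lcm}(m_j : l_j>0)=n$, and since $\{m_j:l_j>0\}=\operatorname{Supp}\widetilde\Phi_V$ this is precisely (4).

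For sufficiency I would construct the epimorphism explicitly. Conditions (1) and (2) guarantee $t\geqslant 0$ and $l_j\geqslant 0$, so the tuple is a bona fide signature, and I then exhibit generating vectors by a short case analysis on $(\gamma,t)$. When $\gamma\geqslant 1$, or $\gamma=0$ with $t\geqslant 4$, a pair of generators can be sent to a reflection and a generator of $\braket{r}$ (or to $s$ and $sr$), so surjectivity is immediate, and the remaining entries are adjusted using the freedom $\theta(y_j)=r^{(n/m_j)q_j}$ with $(q_j,m_j)=1$ to satisfy the product-one relation; this is always possible because $n$ is odd, so commutators already sweep out all of $\braket{r}$. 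In the two tight cases $(0,2)$ and $(1,0)$, surjectivity together with the product-one relation forces $\operatorname{lcm}(m_j:l_j>0)=n$, which is exactly condition (4). Once such an action is produced, Theorem~\ref{thm-dihedral_odd_formula} computes its analytic representation: substituting $\gamma$, $t$ and $\widehat\Psi_\theta = \Phi_{\rho_a}$ (Proposition~\ref{pro:presignature_transform}) into the formula for $\nu_h$ returns $\braket{V,\rho^{(n,h)}}$, which equals $\braket{V,\rho^h}$ by (3), so the representation obtained is exactly $V$; the ``furthermore'' clause is then a restatement of Proposition~\ref{pro-dihedral_inverse_formula_odd}.

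I expect the main obstacle to be the sufficiency construction, namely verifying surjectivity and the product-one relation simultaneously across all cases and pinning down that condition (4) is needed in exactly the two tight regimes $(\gamma,t)\in\{(0,2),(1,0)\}$ and nowhere else. This is the odd-$n$ analogue of Theorems~\ref{thm-existence_even_0} and~\ref{thm-existence_even_>0}, but it is considerably simpler because all involutions of $\mathbf{D}_n$ are conjugate, so there is no need to track the $\braket{s}$ versus $\braket{sr}$ distinction that complicates the even case.
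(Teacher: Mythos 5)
Your proposal is correct, and its overall architecture coincides with the paper's proof: necessity of (1)--(3) via Theorem~\ref{thm-dihedral_odd_formula}, Proposition~\ref{pro-dihedral_inverse_formula_odd}, Proposition~\ref{pro:presignature_transform} and Lemma~\ref{lem-div_transform_signature_func}(1), then sufficiency by realizing the signature $(\braket{V,\psi_1};2^t,m_1^{l_1},\ldots,m_v^{l_v})$ and pushing back through the formulas, with condition (3) closing the loop exactly as the paper does (there phrased as $\Phi_{\rho_a}=\Phi_V$ via Lemma~\ref{lem-div_transform} and Proposition~\ref{pro-div_transform}). The one genuine difference is the realization step: the paper outsources both the necessity of condition (4) and the existence of the surface-kernel epimorphism to \cite[Theorems~2.1 and 2.3]{@BujalanceEtAl03}, whereas you re-prove the odd-$n$ realization statement from scratch by exhibiting generating vectors, i.e., you supply the odd analogue of Theorems~\ref{thm-existence_even_0} and~\ref{thm-existence_even_>0}, which the paper never writes down for $n$ odd. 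This buys self-containedness and makes transparent why (4) is exactly the lcm obstruction: your identification of the tight regimes $(\gamma,t)\in\{(0,2),(1,0)\}$ from $\braket{V,\psi_1}\leqslant 1$ and $\braket{V,\psi_2}=0$ is correct, and is where the paper's citation hides the work. The cost is a case analysis you still owe in full; in particular, in the $(1,0)$ regime you must also handle the subcase where both hyperbolic images are reflections (then $[\theta(\alpha_1),\theta(\beta_1)]=r^{2(w-u)}$ for $\theta(\alpha_1)=sr^u$, $\theta(\beta_1)=sr^w$), and in every tight subcase the reduction of the image to $\braket{s,r^{n/L}}$ with $L=\operatorname{lcm}(m_1,\ldots,m_v)$ uses that $2$ is invertible modulo the odd integer $n/L$ --- this is the same invertibility behind your (correct) remark that commutators sweep out $\braket{r}$ when $n$ is odd. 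Also, your opening sentence lumping all of $\gamma\geqslant 1$ into the ``surjectivity is immediate'' class should be stated more carefully, since $(\gamma,t)=(1,0)$ belongs to the tight class, as you acknowledge one sentence later. Modulo writing out these verifications (the unconstrained cases $\gamma\geqslant 2$, or $\gamma=1$ with $t\geqslant 2$, or $\gamma=0$ with $t\geqslant 4$ indeed all work, e.g.\ with vectors of the shape $(r,1;s,sr^{-\xi_3},r^{n/m_1},\ldots,r^{n/m_v})$), your argument is complete and somewhat more self-contained than the paper's.
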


\begin{proof}
    Let us prove that the conditions are necessary.
    Assume that $V$ is the analytic representation of the action represented by the ske $\theta: \Delta \to \mathbf{D}_n$ of signature $$(\gamma; 2^t, m_1^{l_1}, \ldots, m_v^{l_v}),$$ where $l_j \geqslant 0$.
    Conditions (1) and (2) are clear from Proposition~\ref{pro-dihedral_inverse_formula_odd}.
    Since $\widehat{\Psi}_\theta(q) = \widehat{\Psi}_\theta((n,q))$ for $q \in \mathbb{Z}_+$, Theorem~\ref{thm-dihedral_odd_formula} implies Condition (3).
    After applying Theorem~\ref{thm-dihedral_odd_formula}, Condition (4) is a direct consequence of \cite[Theorems~2.1 and 2.3]{@BujalanceEtAl03}.

    \s
    Let us now prove that the conditions are sufficient.
    Let $V$ be a $\mathbb{C}$-representation of $\mathbf{D}_n$ that satisfies Conditions $(1), \ldots, (4)$.
    As $V$ satisfies (1) and (2), the formulas given in Proposition~\ref{pro-dihedral_inverse_formula_odd} induce a well-defined signature.
    Using Condition (4) and \cite[Theorems~2.1 and 2.3]{@BujalanceEtAl03}, it can be proved that the signature realizes as a dihedral action.
    Finally, we need to check that $V$ is equivalent to the analytic representation $\rho_a$ of the action.
    By Theorem~\ref{thm-dihedral_odd_formula} and Proposition~\ref{pro-dihedral_inverse_formula_odd}, it follows that $$\braket{\rho_a, \psi_j} = \braket{V, \psi_j} \mbox{ for } j=1,2 \mbox{ and } \widetilde{\Phi}_{\rho_a}(q) = \widetilde{\Phi}_V(q)\mbox{ for }q \in \mathbb{Z}^{|n}.$$
    Then, Lemma~\ref{lem-div_transform} and Proposition~\ref{pro-div_transform} imply that $\Phi_{\rho_a}(q) = \Phi_V(q)$ for $q \in \mathbb{Z}^{|n}$, and thus $\braket{\rho_a, \rho^{(n,h)}} = \braket{V, \rho^{(n,h)}}$ for $1 \leqslant h \leqslant (n-1)/2$.
    We conclude by Condition (3).
\end{proof}

\begin{lem}\label{lem:sign_func_AB}
    Let $n \geqslant 2$ be an even integer.
    \begin{enumerate}
        \item $A = \widehat{\Psi}_\theta(n) - \widehat{\Psi}_\theta(\tfrac{n}{2}) = \Phi_{\rho_a}(n) - \Phi_{\rho_a}(\tfrac{n}{2})$.
        \item $B = \widehat{\Psi}_\theta(\tfrac{n}{2}) - \widehat{\Psi}_\theta(\tfrac{n}{4}) = \Phi_{\rho_a}(\tfrac{n}{2}) - \Phi_{\rho_a}(\tfrac{n}{4})$ for $n \in 4 \mathbb{Z}$.
    \end{enumerate}
\end{lem}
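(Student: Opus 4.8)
The plan is to reduce both identities to the combinatorial interpretation of the divisor transform $\widehat{\Psi}_\theta$ and then invoke the duality $\widehat{\Psi}_\theta = \Phi_{\rho_a}$ already established in Proposition~\ref{pro:presignature_transform}. Recall from the proof of Lemma~\ref{lem-div_transform_signature_func} that, since every $m_j$ divides $n$, for any $q \in \mathbb{Z}_+$ the value $\widehat{\Psi}_\theta(q) = \sum_{t \in \mathbb{Z}^{|(n,q)}} \Psi_\theta(t)$ is exactly the number of indices $1 \leqslant j \leqslant v$ for which $m_j$ divides $q$. Consequently, whenever $d_1 \mid d_2$ are divisors of $n$, the difference $\widehat{\Psi}_\theta(d_2) - \widehat{\Psi}_\theta(d_1)$ counts the cyclic subgroups $C_j$ with $m_j \mid d_2$ but $m_j \nmid d_1$.

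For part (1), I would apply this with $d_2 = n$ and $d_1 = n/2$, so that $\widehat{\Psi}_\theta(n) - \widehat{\Psi}_\theta(\tfrac{n}{2})$ equals the number of $j$ with $m_j \nmid n/2$; this is precisely Lemma~\ref{lem-div_transform_signature_func}(3) with $q = n/2$. The remaining point is purely $2$-adic: writing $n = 2^s n'$ and $m_j = 2^{t_j} m_j'$ with $n', m_j'$ odd and $0 \leqslant t_j \leqslant s$, one has $m_j \mid n/2$ iff $t_j \leqslant s-1$, while $n/m_j = 2^{s - t_j}(n'/m_j')$ is odd iff $t_j = s$. Hence $m_j \nmid n/2$ iff $n/m_j$ is odd, so the count equals $A$ by definition. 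The final equality $\widehat{\Psi}_\theta(n) - \widehat{\Psi}_\theta(\tfrac{n}{2}) = \Phi_{\rho_a}(n) - \Phi_{\rho_a}(\tfrac{n}{2})$ is then immediate from $\widehat{\Psi}_\theta = \Phi_{\rho_a}$.

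For part (2), where $n \in 4\mathbb{Z}$ so that $n/4 \in \mathbb{Z}$, I would apply the same interpretation with $d_2 = n/2$ and $d_1 = n/4$: the difference $\widehat{\Psi}_\theta(\tfrac{n}{2}) - \widehat{\Psi}_\theta(\tfrac{n}{4})$ counts the $C_j$ with $m_j \mid n/2$ but $m_j \nmid n/4$. Again the $2$-adic bookkeeping shows that $m_j \mid n/2$ and $m_j \nmid n/4$ iff $t_j = s-1$, which is exactly the condition that $n/2m_j = 2^{\,s-1-t_j}(n'/m_j')$ be an odd integer; those $j$ are precisely the ones counted by $B$. The second equality follows from $\widehat{\Psi}_\theta = \Phi_{\rho_a}$ exactly as before.

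I do not expect any genuine obstacle here; the only point requiring care is the $2$-adic valuation argument translating the divisibility conditions $m_j \mid n/2$ and $m_j \nmid n/4$ into the parity statements defining $A$ and $B$, and, for $B$, observing that the indices with $m_j \nmid n/2$ yield a non-integral $n/2m_j$ and are therefore correctly excluded from the count.
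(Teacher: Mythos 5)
Your proposal is correct and takes essentially the same route as the paper, whose proof is the one-line observation that the lemma follows from Lemma~\ref{lem-div_transform_signature_func} together with the identity $\widehat{\Psi}_\theta = \Phi_{\rho_a}$ of Proposition~\ref{pro:presignature_transform}. Your $2$-adic bookkeeping (including the check that indices with $m_j \nmid n/2$ give a non-integral $n/2m_j$ and so are rightly excluded from $B$) simply spells out the divisibility-to-parity translation the paper leaves implicit.
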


\begin{proof}
    It is a consequence of Lemma~\ref{lem-div_transform_signature_func} together with the fact that $\widehat{\Psi}_\theta = \Phi_{\rho_a}$.
\end{proof}

\begin{thm}\label{thm-dihedral_rep_existence_even}
Let $n \geqslant 2$ be even and let $V$ be a  $\mathbb{C}$-representation  of $\mathbf{D}_n.$ Then $V$ is equivalent to the analytic representation of a $\mathbf{D}_n$-action if and only if the following statements hold.

    \begin{enumerate}
        \item $\braket{V, \psi_2} + 1 \geqslant \braket{V, \psi_1} + | \braket{V, \psi_3} - \braket{V, \psi_4} |$.
        \item $\widetilde{\Phi}_V(q) \geqslant 0$ for each $q \in \mathbb{Z}^{|n} - \set{1}$.
        \item $\braket{V, \rho^h} = \braket{V, \rho^{(n,h)}}$ for $1 \leqslant h \leqslant (n-2)/2$.
        \item if $\braket{V, \psi_1} = \braket{V, \psi_2} = 0$ then $\operatorname{lcm}(\operatorname{Supp} \widetilde{\Phi}_V) = n$.
        \item if $\braket{V, \psi_1} = 0$, $\braket{V, \psi_2} \geqslant 1$ and $|\braket{V, \psi_3} - \braket{V, \psi_4}| = \braket{V, \psi_2} + 1$ then $\Phi_V(n) > \Phi_V(\tfrac{n}{2})$.
        \item if $\braket{V, \psi_1} = 1$ and $\braket{V, \psi_2} = 0$ then $\operatorname{lcm}(\operatorname{Supp} \widetilde{\Phi}_V) = n$ or $n/2$.
        In the latter case, if $n \in 4 \mathbb{Z}$ then $\Phi_V(\tfrac{n}{2}) - \Phi_V(\tfrac{n}{4})$ is odd.
    \end{enumerate}
    Furthermore, if we write$$\mathbb{Z}^{|n} -\{1\}= \set{m_1, \ldots, m_v} \mbox{ with } 1 < m_1 < \ldots < m_v,$$
    then the corresponding action has geometric signature
    \begin{equation}
        (\braket{V, \psi_1}; \braket{s}^a, \braket{sr}^b, \braket{r^{n/m_1}}^{l_1}, \ldots, \braket{r^{n/m_v}}^{l_v}),
    \end{equation}
    where $l_j = \widetilde{\Phi}_V(m_j)$ for $j = 1, \ldots, v$, and
    $$
        a = \braket{V, \psi_2 \oplus \psi_4} - \braket{V, \psi_1 \oplus \psi_3} + 1, \,\,
        b = \braket{V, \psi_2 \oplus \psi_3} - \braket{V, \psi_1 \oplus \psi_4} + 1.
    $$
\end{thm}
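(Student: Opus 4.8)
The plan is to mirror the structure of the proof of Theorem~\ref{thm-dihedral_rep_existence_odd}, replacing the role of the signature-realization result of \cite{@BujalanceEtAl03} by our geometric refinements, Theorem~\ref{thm-existence_even_0} and Theorem~\ref{thm-existence_even_>0}. Throughout I set $\gamma = \braket{V, \psi_1}$ and let $a, b, l_1, \ldots, l_v$ be defined by the formulas of Proposition~\ref{pro-dihedral_inverse_formula_even}. A short computation gives $a + b = 2(\braket{V, \psi_2} - \braket{V, \psi_1} + 1)$ and $a - b = 2(\braket{V, \psi_4} - \braket{V, \psi_3})$, so that $a \equiv b \pmod 2$ and $a + b$ is even automatically.

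For necessity, suppose $V \cong \rho_a$ for an action with geometric signature $(\gamma; \braket{s}^a, \braket{sr}^b, C_1, \ldots, C_v)$. Conditions (1) and (2) are immediate from Proposition~\ref{pro-dihedral_inverse_formula_even}, since $a, b \geqslant 0$ is equivalent to (1) and each multiplicity $l_j = \widetilde{\Phi}_V(m_j) \geqslant 0$ is (2). Condition (3) follows from Lemma~\ref{lem-div_transform_signature_func}(1), which gives $\widehat{\Psi}_\theta(h) = \widehat{\Psi}_\theta((n,h))$, combined with Theorem~\ref{thm-dihedral_even_formula} expressing $\braket{\rho_a, \rho^h}$ through $\widehat{\Psi}_\theta$. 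The remaining conditions are the representation-theoretic translations of the necessary conditions of Theorems~\ref{thm-existence_even_0} and~\ref{thm-existence_even_>0}: using Proposition~\ref{pro:presignature_transform} (so that $\widehat{\Psi}_\theta = \Phi_{\rho_a}$) and Lemma~\ref{lem:sign_func_AB}, the quantities $A = \Phi_V(n) - \Phi_V(\tfrac{n}{2})$ and $B = \Phi_V(\tfrac{n}{2}) - \Phi_V(\tfrac{n}{4})$ are recovered. One checks that the hypotheses singled out in (4), (5), (6) correspond exactly to the boundary cases $\gamma = 0$ with $a+b = 2$, then $\gamma = 0$ with $a+b>2$ and $ab = 0$, and finally $\gamma = 1$ with $a = b = 0$, under which the existence theorems force $\operatorname{lcm}(m_j) = n$, then $A > 0$, and then $\operatorname{lcm}(m_j) \in \{n, n/2\}$ with $B$ odd.

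For sufficiency, assume $V$ satisfies (1)--(6). By (1) and (2) the integers $a, b, l_j$ produced by Proposition~\ref{pro-dihedral_inverse_formula_even} are nonnegative, hence they define a genuine geometric signature. I would then verify that this signature fulfils the hypotheses of Theorem~\ref{thm-existence_even_0} when $\gamma = 0$ and of Theorem~\ref{thm-existence_even_>0} when $\gamma \geqslant 1$. The parity clause and the ``$a + b \geqslant 2$ even'' clause hold automatically by the computation above together with $a - A = 2\braket{V, \psi_2} - 2\braket{V, \psi_3} \equiv 0 \pmod 2$; the three case-dependent clauses are supplied by (4), (5), (6) through the identifications of $A$ and $B$ from Lemma~\ref{lem:sign_func_AB}. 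This yields a ske $\theta: \Delta \to \mathbf{D}_n$ realizing the signature. Finally, to confirm $\rho_a \cong V$, I evaluate Theorem~\ref{thm-dihedral_even_formula} on the constructed signature: the formulas collapse to $\braket{\rho_a, \psi_j} = \braket{V, \psi_j}$ for $j = 1, 2, 3, 4$ (here $b + A = 2(\braket{V, \psi_3} - \gamma + 1)$ makes $\mu_3 = \braket{V, \psi_3}$, and symmetrically for $\mu_4$), and $\widetilde{\Phi}_{\rho_a} = \widetilde{\Phi}_V$ on $\mathbb{Z}^{|n}$. Proposition~\ref{pro-div_transform} and Lemma~\ref{lem-div_transform} then give $\Phi_{\rho_a} = \Phi_V$, hence $\braket{\rho_a, \rho^{(n,h)}} = \braket{V, \rho^{(n,h)}}$, and Condition (3) upgrades this to all $\rho^h$.

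The main obstacle I anticipate is the bookkeeping in the sufficiency direction: matching each of the case-distinctions (4)--(6) to the precise boundary hypotheses of the two existence theorems, and confirming that the parity compatibility between $a$, $b$ and $A$ (and the $B$-parity in the $n \in 4\mathbb{Z}$ subcase) is automatically enforced rather than an extra hidden requirement. In particular, Condition (5), where equality holds in (1) and exactly one of $a, b$ vanishes, must be shown equivalent to $A > 0$ via $A = \Phi_V(n) - \Phi_V(\tfrac{n}{2})$, covering both $a = 0$ and $b = 0$ through the outer automorphism $s \mapsto sr$.
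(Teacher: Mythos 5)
Your proposal follows exactly the paper's intended argument: the published proof of Theorem~\ref{thm-dihedral_rep_existence_even} simply declares it analogous to that of Theorem~\ref{thm-dihedral_rep_existence_odd}, with Theorem~\ref{thm-dihedral_even_formula}, Proposition~\ref{pro-dihedral_inverse_formula_even}, Theorems~\ref{thm-existence_even_0} and~\ref{thm-existence_even_>0}, and Lemma~\ref{lem:sign_func_AB} substituted for the odd-case ingredients, which is precisely the scheme you execute. Your extra bookkeeping --- the identities $a+b = 2(\braket{V,\psi_2}-\braket{V,\psi_1}+1)$ and $a - A \equiv 0 \pmod 2$, the matching of conditions (4)--(6) to the boundary cases of $(\gamma, a, b)$, and the collapse $\mu_3 = \braket{V,\psi_3}$ --- is correct and merely fills in details the paper leaves implicit.
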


\begin{proof}
    The proof is analogous to the one of the previous theorem. We recall that we must use the results associated to $n$ even: Theorem~\ref{thm-dihedral_even_formula} instead of Theorem~\ref{thm-dihedral_odd_formula}, Proposition~\ref{pro-dihedral_inverse_formula_even} instead of Proposition~\ref{pro-dihedral_inverse_formula_odd}, and Theorems~\ref{thm-existence_even_0} and \ref{thm-existence_even_>0} instead of \cite[Theorems~2.2 and 2.3]{@BujalanceEtAl03}.
    Also, Lemma~\ref{lem:sign_func_AB} gives equivalent statements about the geometric signature in terms of the pre-signature function.
\end{proof}

We end this section by considering the question of whether or not an irreducible $\mathbb{C}$-representation arises as the analytic representation of a $\mathbf{D}_n$-action.
\begin{lem}\label{lem-rho1}
    Let $n \geqslant 3$ be an integer. If $\rho_a$ is the analytic representation of a $\mathbf{D}_n$-action in genus $g \geqslant 2$, then $\braket{\rho_a, \rho^1} \geqslant 1$.
\end{lem}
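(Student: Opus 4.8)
The plan is to reduce the claim to a single numerical inequality obtained by comparing the explicit value of $\langle\rho_a,\rho^1\rangle$ with the Riemann--Hurwitz formula. First I would read off the multiplicity $\nu_1=\langle\rho_a,\rho^1\rangle$ from the analytic representation formulas: putting $h=1$ in Theorem~\ref{thm-dihedral_odd_formula} (for $n$ odd) or in Theorem~\ref{thm-dihedral_even_formula} (for $n$ even), and using $\widehat{\Psi}_\theta(1)=0$ together with $\widehat{\Psi}_\theta(n)=v$, both cases collapse to the same expression
\begin{equation}
    \nu_1=2(\gamma-1)+\tfrac{1}{2}t+v,
\end{equation}
where $t$ is the number of order-two (reflection) ramification points of the action (so $t=a+b$ when $n$ is even) and $v$ is the number of rotation points $C_1,\ldots,C_v$.

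Next I would write out the Riemann--Hurwitz formula for the action. Since the ramification indices consist of $t$ copies of $2$ together with $m_1,\ldots,m_v$, and $|\mathbf{D}_n|=2n$, dividing the formula by $2n$ gives
\begin{equation}
    \tfrac{g-1}{n}=2(\gamma-1)+\tfrac{1}{2}t+\sum_{j=1}^{v}\Bigl(1-\tfrac{1}{m_j}\Bigr),
\end{equation}
and the left-hand side is strictly positive because $g\geqslant2$ and $n\geqslant3$. Subtracting this identity from the expression for $\nu_1$ yields $\nu_1-\tfrac{g-1}{n}=\sum_{j=1}^{v}\tfrac{1}{m_j}\geqslant0$, whence
\begin{equation}
    \nu_1\geqslant\tfrac{g-1}{n}>0.
\end{equation}

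Finally, $\nu_1=\langle\rho_a,\rho^1\rangle$ is the multiplicity of $\rho^1$ in $\rho_a$ and is therefore a nonnegative integer; combined with $\nu_1>0$ this forces $\nu_1\geqslant1$, which is precisely the assertion. The argument is uniform in the genus $\gamma$ and in the parity of $n$, so I expect no genuine case distinction to be required. There is essentially no serious obstacle here: the only subtlety is to identify $\nu_1$ correctly and to note that the slack $v-\sum_j(1-1/m_j)=\sum_j 1/m_j$ between the two displayed quantities is exactly what makes $\nu_1$ dominate the (positive) Riemann--Hurwitz quantity; integrality of the multiplicity then closes the gap between $\nu_1>0$ and $\nu_1\geqslant1$.
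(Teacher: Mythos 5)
Your proof is correct, and it takes a genuinely different route from the paper's. The paper argues by contradiction: setting $\braket{\rho_a,\rho^1}=0$ in the same formula $\nu_1=2(\gamma-1)+\tfrac{1}{2}(a+b)+\widehat{\Psi}_\theta(n)$ leaves only three parameter triples $(\gamma,\,a+b,\,\widehat{\Psi}_\theta(n))\in\set{(1,0,0),(0,2,1),(0,4,0)}$, and it then invokes the realization results (Theorems~\ref{thm-existence_even_0} and \ref{thm-existence_even_>0}) to show the first case admits no action, while the other two force the geometric signatures $(0;\braket{s},\braket{sr},\braket{r})$ and $(0;\braket{s}^2,\braket{sr}^2)$, both of genus $g<2$. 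You instead subtract the Riemann--Hurwitz identity from the expression for $\nu_1$ to obtain the exact relation
\begin{equation}
    \braket{\rho_a,\rho^1}=\frac{g-1}{n}+\sum_{j=1}^v\frac{1}{m_j},
\end{equation}
which is strictly positive for $g\geqslant 2$, and integrality of the multiplicity finishes the argument. This checks out in both parities: your bookkeeping is right that for $n$ even the order-two stabilizers $\braket{r^{n/2}}$ (those $C_j$ with $m_j=2$) are counted inside the sum over $j$ and not in $t=a+b$, and Riemann--Hurwitz keeps them in $\sum_j(1-1/m_j)$ exactly where the term $\widehat{\Psi}_\theta(n)=v$ expects them. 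Your route is more self-contained --- it needs only the multiplicity formulas of Theorems~\ref{thm-dihedral_odd_formula} and \ref{thm-dihedral_even_formula} plus Riemann--Hurwitz, avoiding any case distinction and the signature-realization machinery of Section~\ref{chp:existence} --- and it yields the sharper quantitative bound $\braket{\rho_a,\rho^1}\geqslant (g-1)/n$. What the paper's enumeration buys instead is an explicit identification of the degenerate signatures that would make the multiplicity vanish, which is in the spirit of its classification program, but as a proof of the lemma your argument is cleaner.
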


\begin{proof}
    Assume that $n \geqslant 4$ is even.
    By Theorem~\ref{thm-dihedral_even_formula} we know that
    \begin{equation}
        \braket{\rho_a, \rho^1} = 2(\gamma-1) + \tfrac{1}{2}(a+b) + \widehat{\Psi}_\theta(n).
    \end{equation}
    
    If $\braket{\rho_a, \rho^1} = 0$ then either
    \begin{enumerate}
        \item $\gamma=1$, $a+b=0$ and $\widehat{\Psi}_\theta(n)=0$,
        \item $\gamma=0$, $a+b=2$ and $\widehat{\Psi}_\theta(n)=1$, or
        \item $\gamma=0$, $a+b=4$ and $\widehat{\Psi}_\theta(n)=0$.
    \end{enumerate}
    
    Note that Theorem~\ref{thm-existence_even_>0} tells us that no action satisfies (1).
    Besides, by Theorem~\ref{thm-existence_even_0}, the only geometric signature compatible with (2) is $(0; \braket{s}, \braket{sr}, \braket{r})$.
    Also, (3) has geometric signature $(0; \braket{s}^2, \braket{sr}^2)$.
    In each case the genus $g$ is less than 2, a contradiction.
    If $n\geqslant 3$ is odd then we set $t := a+b$ and obtain the same conclusions.
\end{proof}

\begin{pro}
    There is a $\mathbf{D}_n$-action in genus $g \geqslant 2$ whose analytic representation $\rho_a$ is irreducible if and only if $n \in \set{3,4,6}$.
    In each case, $\rho_a \cong \rho^1$ and the action is in genus $2$.
\end{pro}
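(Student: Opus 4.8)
The plan is to reduce the statement to a combinatorial condition on the two cyclic parts of the geometric signature, and then to a finite case check.

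\textbf{Step 1 (identifying the representation and the genus).} First I would invoke Lemma~\ref{lem-rho1}: every $\mathbf{D}_n$-action with $n \geqslant 3$ satisfies $\braket{\rho_a, \rho^1} \geqslant 1$, while for $n = 2$ the group $\mathbf{D}_2$ is abelian, so all its irreducible $\mathbb{C}$-representations are one-dimensional and $\rho_a$ (of dimension $g \geqslant 2$) can never be irreducible. Hence if $\rho_a$ is irreducible it must coincide with its constituent $\rho^1$, forcing $g = \deg \rho^1 = 2$. This already yields the ``$\rho_a \cong \rho^1$ in genus $2$'' assertion and leaves us to determine for which $n$ the representation $\rho^1$ is realizable as an analytic representation.

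\textbf{Step 2 (translating $\rho_a \cong \rho^1$).} Next I would impose $\rho_a \cong \rho^1$ in the formulas of Theorems~\ref{thm-dihedral_odd_formula} and~\ref{thm-dihedral_even_formula}, that is, $\gamma = 0$, the vanishing of the remaining degree-one multiplicities, $\nu_1 = 1$ and $\nu_h = 0$ for $h \geqslant 2$. For $n$ odd this gives $t = 2$, $\widehat{\Psi}_\theta(n) = 2$ and $\widehat{\Psi}_\theta(h) = 1$ for $2 \leqslant h \leqslant (n-1)/2$; for $n$ even the conditions $\mu_2 = \mu_3 = \mu_4 = 0$ give $a = b = 1$ and $A = 1$, and $\nu_1 = 1$ gives $\widehat{\Psi}_\theta(n) = 2$, with $\widehat{\Psi}_\theta(h) = 1$ for $2 \leqslant h \leqslant (n-2)/2$. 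In either case $v = \widehat{\Psi}_\theta(n) = 2$, so the geometric signature carries exactly two cyclic parts of orders $m_1, m_2 \in \mathbb{Z}^{|n} - \set{1}$, and by Lemma~\ref{lem-div_transform_signature_func} one has $\widehat{\Psi}_\theta(h) = \#\set{j : m_j \mid h}$ for all $h$.

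\textbf{Step 3 (excluding large $n$).} Now I would rule out all $n \notin \set{3,4,6}$. For odd $n \geqslant 5$, taking $h = 2$ yields $\widehat{\Psi}_\theta(2) = \widehat{\Psi}_\theta((n,2)) = \widehat{\Psi}_\theta(1) = 0 \neq 1$, a contradiction; only $n = 3$ remains, where the range $2 \leqslant h \leqslant (n-1)/2$ is empty. For even $n$, the condition $\widehat{\Psi}_\theta(2) = 1$ — which holds because $h = 2$ lies in the range when $n \geqslant 6$, and because $A = 1$ forces $\widehat{\Psi}_\theta(n/2) = 1$ when $n = 4$ — forces exactly one $m_j$ to equal $2$, say $m_1 = 2$ and $m_2 \geqslant 3$. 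Then for every odd $h$ in the range one needs $m_2 \mid h$. For $n \geqslant 12$ both $h = 3$ and $h = 5$ lie in the range, so $m_2 \mid (3,5) = 1$, impossible; and for $n \in \set{8,10}$ the value $h = 3$ lies in the range, so $m_2 \mid 3$ together with $m_2 \mid n$ forces $m_2 = 1$, again impossible. Hence among even $n$ only $n = 4$ and $n = 6$ survive (their ranges contain no odd $h \geqslant 3$), with $\set{m_1, m_2} = \set{2,4}$ and $\set{2,3}$ respectively.

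\textbf{Step 4 (constructing the actions).} Finally I would confirm realizability for $n \in \set{3,4,6}$ by applying Theorem~\ref{thm-dihedral_rep_existence_odd} (for $n = 3$) and Theorem~\ref{thm-dihedral_rep_existence_even} (for $n = 4, 6$) to $V = \rho^1$: a direct verification of the listed conditions shows $\rho^1$ is realizable, with signatures $(0; 2, 2, 3, 3)$, $(0; 2, 2, 2, 4)$ and $(0; 2, 2, 2, 3)$, each of genus $2$ by the Riemann--Hurwitz formula. The main obstacle is the divisor bookkeeping in Step 3, especially the short-range borderline cases $n = 8, 10$, where the exclusion cannot be read off from a single inequality and instead requires using the parity of $h$ together with the divisibility $m_2 \mid n$.
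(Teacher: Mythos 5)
Your proof is correct, and while its skeleton matches the paper's (Lemma~\ref{lem-rho1} forces $\rho_a \cong \rho^1$, hence $g = \deg \rho^1 = 2$; then exclude bad $n$; then realize $n \in \set{3,4,6}$), the central exclusion step runs along a genuinely different track. The paper invokes condition (3) of Theorems~\ref{thm-dihedral_rep_existence_odd} and \ref{thm-dihedral_rep_existence_even}: an analytic representation must contain every Galois conjugate $(\rho^1)^\sigma$ with the same multiplicity as $\rho^1$, so irreducibility forces the Galois group of $\rho^1$ to be trivial, i.e.\ $\phi(n)=2$, i.e.\ $n \in \set{3,4,6}$ --- a one-line argument. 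You instead substitute $\rho_a \cong \rho^1$ into the Chevalley--Weil multiplicity formulas (Theorems~\ref{thm-dihedral_odd_formula} and \ref{thm-dihedral_even_formula}), extract $\gamma=0$, $\widehat{\Psi}_\theta(n)=2$, $\widehat{\Psi}_\theta(h)=1$, together with $t=2$ (odd case) or $a=b=1$, $A=1$ (even case), and kill odd $n \geqslant 5$ via $h=2$ and even $n \geqslant 8$ via $h=3,5$; I checked the borderline cases $n=8,10$ and the divisor bookkeeping is sound. Your route is longer but produces the exact geometric signatures as a by-product, whereas the paper's Galois observation buys brevity and makes transparent \emph{why} only $\phi(n)=2$ can work. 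Your converse is essentially the paper's, approached from the other end: the paper exhibits $\sigma_3, \sigma_4, \sigma_6$ and asserts they realize with $\rho_a \cong \rho^1$, while you verify the hypotheses of the realization theorems directly for $V = \rho^1$ and recover the same signatures, including the geometric signature $(0; \braket{s}, \braket{sr}, \braket{r^3}, \braket{r^2})$ for $n=6$. Two small expository points: you dispose of $n=2$ explicitly (abelian, so no $2$-dimensional irreducible), which the paper leaves implicit; and in Step 3 your claim $\set{m_1,m_2} = \set{2,3}$ rather than $\set{2,6}$ for $n=6$ needs the constraint $A=1$ from Step 2 (since $m_2=6$ would give $A=2$), which you established but should cite at that point --- though nothing in the proposition depends on it, as Step 4 settles existence independently.
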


\begin{proof}
    Assume that $\rho_a$ is irreducible.
    By Lemma~\ref{lem-rho1} one has that $\braket{\rho_a, \rho^1}$ is always positive, hence $\rho_a \cong \rho^1$.
    However, by Theorem~\ref{thm-dihedral_rep_existence_odd} (if $n$ is odd) and Theorem~\ref{thm-dihedral_rep_existence_even} (if $n$ is even) one has that  $$\braket{\rho_a, \rho^1} \neq 0 \implies \braket{\rho_a, (\rho^1)^\sigma} \neq 0$$for all $\sigma$ in the Galois group of $\rho^1$.
    It follows that the Galois group of $\rho^1$ must be trivial, and this only happens for $\mathbf{D}_n$ with $n \in \set{3, 4, 6}$.
    Conversely, consider the signature $\sigma_3 = (0; 2, 2, 3, 3)$ for $\mathbf{D}_3$ and the geometric signatures
    \begin{equation}
        \sigma_4 = (0; \braket{s}, \braket{sr}, \braket{r^2}, \braket{r})\ \text{for}\ \mathbf{D}_4\ \text{and}\
        \sigma_6 = (0; \braket{s}, \braket{sr}, \braket{r^3}, \braket{r^2})\ \text{for}\ \mathbf{D}_6.
    \end{equation}
    In each case, the geometric signature realizes as an action and its associated analytic representation satisfies $\rho_a \cong \rho^1$.
\end{proof}
\section{Group algebra decomposition}
\label{chp:GAD}

Carocca, Recillas and Rodr\'iguez in \cite{@CaroccaEtAl02} studied  Riemann surfaces with dihedral actions and provided the associated group algebra decomposition of their Jacobians.
In this section, equipped with tools not available at that time, we deal with the problem of determining when such a decomposition is affordable by Prym varieties.

\s

Along the way, we recover some of the results of \cite{@CaroccaEtAl02}. We also relate our results with the classical Ekedahl-Serre problem of completely decomposable Jacobians.

\subsection*{Preliminaries}
\label{sec:group_algebra_decomposition} Let  $G$ be a finite group and let $A$ be an abelian variety with $G$-action. Following \cite[Theorem 2.2]{@LangeRecillas04a}, there are abelian subvarieties $B_1, \ldots, B_r$ of $A$ and a $G$-equivariant isogeny
    \begin{equation}\label{chao}
        A \sim B_1^{n_1} \times \cdots \times B_r^{n_r}
    \end{equation}where $r$ is the number of irreducible $\mathbb{Q}$-representation. 
    Furthermore, if $$\operatorname{Irr}_\mathbb{Q}(G) = \set{W_1, \ldots, W_r}$$then $G$ acts on $B_j^{n_j}$ via the representation $W_j$. The isogeny \eqref{chao} is called the  \emph{group algebra decomposition} of $A$ respect to $G$, and the subvarieties $B_j$, which are defined up to isogeny, are called the \emph{group algebra components} of $A$ associated to $W_j$.

\s

If $A=JS$ and $W_1$ is  the trivial representation (as we shall do in the sequel) then $$B_1 \sim JS_G \mbox{ and }n_1=1.$$In addition, as proved in \cite{@Rojas07}, if $V_j \in \operatorname{Irr}_\mathbb{C}(G)$ is Galois associated to $W_j$ then 
    \begin{equation}\label{mesa}
        \dim B_j = \tfrac{1}{2} k_{V_j} \braket{\rho_r, V_j} \mbox{ for } j=2, \ldots, r,
    \end{equation}
    where $k_{V_j} = s_{V_j} |\operatorname{Gal}(K_{V_j}/\mathbb{Q})|$.

\s

The group algebra decomposition of $JS$ with respect to $G$ induces isogeny decompositions of the Jacobian variety of the quotients of $S$ as well as of the Prym varieties of the intermediate coverings. More precisely, following the results of \cite{@CaroccaRodriguez06}, if  $H \leqslant K$ are subgroups of $G$ then
    \begin{equation}\label{indu}
        JS_H \sim JS_G \times B_2^{u_2} \times \cdots B_r^{u_r},
    \end{equation}
    where $u_j = d_{V_j}^{H}/ s_{V_j}$ for $2 \leqslant j \leqslant r$.
    In addition,
    \begin{equation}
        P(\pi_K^H) \sim B_2^{t_2} \times \cdots \times B_r^{t_r},
    \end{equation}
    where $t_j = (d_{V_j}^H - d_{V_j}^K)/ s_{V_j}$ for $2 \leqslant j \leqslant r$. In particular, $n_j=d_{V_j}/s_{v_j}$ in \eqref{chao}.

\s

We refer to \cite{@LangeRodriguez22} for more details on decomposition of Jacobians by Prym varieties.

\subsection*{Jacobians with dihedral actions}

We recall that the Schur index of the dihedral representations is equal to 1.
Also, there is a bijection between $\mathbb{Z}^{|n} - \set{1,2}$ and the set of irreducible $\mathbb{Q}$-representations of $\mathbf{D}_n$ of degree greater than one, given by
\begin{equation}
    \mathbb{Z}^{|n} - \set{1,2} \to \operatorname{Irr}_\mathbb{Q}(\mathbf{D}_n), \quad
    q \mapsto W(q) := \oplus_\sigma(\rho^{n/q})^\sigma.
\end{equation}

The general case for dihedral groups is as follows.
For $n$ even, the group algebra decomposition of $JS$ with respect to $\mathbf{D}_n$ is
\begin{equation}\textstyle
    JS \sim JS_{\mathbf{D}_n} \times B_2 \times B_3 \times B_4 \times \prod_{q \in \mathbb{Z}^{|n} - \set{1,2}} B(q)^2,
\end{equation}
where $B_j$ is the group algebra component associated to the nontrivial degree one representation $\psi_j$, and $B(q)$ is the group algebra component associated to $W(q)$.
If $n$ is odd then we just omit $B_3$ and $B_4$, and hence
\begin{equation}\textstyle
    JS \sim JS_{\mathbf{D}_n} \times B_2 \times \prod_{q \in \mathbb{Z}^{|n} - \set{1,2}} B(q)^2.
\end{equation}

\s 

Hereafter, $\phi$ denotes the \emph{Euler totient function}.

\begin{pro}\label{pro-dihedral_dim_B(q)}
    The dimension of the group algebra components are
     $$\dim B_j = \braket{\rho_a, \psi_j}\, \mbox{ and } \, \dim B(q) = \tfrac{1}{2} \phi(q) \braket{\rho_a, \rho^{n/q}}$$ for  $j=1, \ldots, 4$ and 
        $q \in \mathbb{Z}^{|n} - \set{1,2}$.
\end{pro}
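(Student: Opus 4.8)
The plan is to evaluate the general dimension formula \eqref{mesa} on each irreducible $\mathbb{Q}$-representation of $\mathbf{D}_n$, exploiting two facts recorded earlier: all Schur indices equal $1$, and $\braket{\rho_r, V} = 2\braket{\rho_a, V}$ for every $V \in \operatorname{Irr}_\mathbb{C}(\mathbf{D}_n)$ (from the remark following the classification of the irreducible $\mathbb{C}$-representations). Thus, for a nontrivial $\mathbb{Q}$-representation $W$ Galois associated to $V$, formula \eqref{mesa} becomes $\dim B = \tfrac{1}{2} k_V \braket{\rho_r, V} = k_V \braket{\rho_a, V}$ with $k_V = s_V |\operatorname{Gal}(K_V/\mathbb{Q})| = |\operatorname{Gal}(K_V/\mathbb{Q})|$, so everything reduces to computing the order of the relevant Galois group.

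First I would treat the degree-one components. Each $\psi_j$ takes values in $\set{1, -1}$, so its character field is $K_{\psi_j} = \mathbb{Q}$ and $|\operatorname{Gal}(K_{\psi_j}/\mathbb{Q})| = 1$; hence $\dim B_j = \braket{\rho_a, \psi_j}$ for $j = 2, 3, 4$. The trivial case $j = 1$ is not covered by \eqref{mesa}, but there $B_1 \sim JS_{\mathbf{D}_n}$ with $n_1 = 1$, so $\dim B_1 = \dim JS_{\mathbf{D}_n} = \gamma = \braket{\rho_a, \psi_1}$, the last equality being the value of the Chevalley--Weil formula on the trivial representation. This settles the first formula for all $j = 1, \ldots, 4$.

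Next I would handle the components $B(q)$ for $q \in \mathbb{Z}^{|n} - \set{1,2}$. Under the stated bijection, $W(q)$ is Galois associated to $V = \rho^{n/q}$, so the task is to compute $k_V = |\operatorname{Gal}(K_V/\mathbb{Q})|$. The character of $\rho^{n/q}$ vanishes on reflections and satisfies $\chi(r^k) = \omega^{(n/q)k} + \overline{\omega}^{(n/q)k}$ with $\omega = \zeta_n$. Since $\omega^{n/q} = \zeta_q$ has order exactly $q$, the character field is the maximal real subfield $K_V = \mathbb{Q}(\zeta_q + \zeta_q^{-1})$ of $\mathbb{Q}(\zeta_q)$. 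The decisive step is that, because $q \geqslant 3$, this field has degree $\phi(q)/2$ over $\mathbb{Q}$; thus $k_V = \phi(q)/2$ and $\dim B(q) = \tfrac{1}{2} k_V \braket{\rho_r, \rho^{n/q}} = \tfrac{1}{2} \phi(q) \braket{\rho_a, \rho^{n/q}}$, as claimed.

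The only genuinely delicate point is this identification of the character field together with the size of its Galois group: one must verify that $\omega^{n/q}$ has order precisely $q$ (immediate, since $n/q$ divides $n$) and invoke the standard fact that $[\mathbb{Q}(\zeta_q + \zeta_q^{-1}) : \mathbb{Q}] = \phi(q)/2$ for $q \geqslant 3$. Everything else is a direct substitution into \eqref{mesa}, so I expect the write-up to be short once this cyclotomic computation is in place.
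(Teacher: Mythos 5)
Your proposal is correct and follows essentially the same route as the paper, whose one-line proof invokes \eqref{mesa} together with $\rho_a \cong \overline{\rho_a}$ (equivalently, $\braket{\rho_r, V} = 2\braket{\rho_a, V}$), exactly as you do. The only difference is that you spell out the routine details the paper leaves implicit --- the Schur indices being $1$, the character fields $\mathbb{Q}$ for the $\psi_j$ and $\mathbb{Q}(\zeta_q + \zeta_q^{-1})$ of degree $\phi(q)/2$ for $\rho^{n/q}$, and the separate treatment of $B_1 \sim JS_{\mathbf{D}_n}$ via $\gamma = \braket{\rho_a, \psi_1}$ --- all of which are accurate.
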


\begin{proof} The proof follows from  \eqref{mesa}  coupled with the fact that 
     $\rho_a \cong \overline{\rho_a}$. 
\end{proof}

The dimensions $d_V^H$ are known for the dihedral groups.
We include them here for latter use.

\begin{lem}\label{lem-dihedral_fix_dim}
    Let $H$ be a subgroup of $\mathbf{D}_n$ and let  $V \in \operatorname{Irr}_\mathbb{C}(\mathbf{D}_n)$.
    For $\alpha,q \in \mathbb{Z}^{|n}$
    \begin{equation}
        \begin{array}{c|ccccc}
            d_V^H & \psi_1 & \psi_2 & \psi_3 & \psi_4 & \rho^{n/q} \\
            \hline
            \braket{r^{n/\alpha}} & 1 & 1 & \delta & \delta & 2\varepsilon \\
            \braket{s, r^{n/\alpha}} & 1 & 0 & \delta & 0 & \varepsilon \\
            \braket{sr, r^{n/\alpha}} & 1 & 0 & 0 & \delta & \varepsilon
        \end{array}
    \end{equation}
    where $\delta=1$ if $n/\alpha$ is even, and $\delta = 0$ otherwise;
    and $\varepsilon=1$ if $\alpha$ divides $n/q$, and $\varepsilon = 0$ otherwise.
    If $n$ is odd then we  omit columns $\psi_3$ and $\psi_4$.
\end{lem}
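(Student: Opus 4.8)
The plan is to compute each entry through the character-theoretic identity
$$d_V^H \;=\; \dim V^H \;=\; \frac{1}{|H|}\sum_{h \in H}\chi_V(h),$$
which identifies $\dim V^H$ with the multiplicity of the trivial representation in the restriction of $V$ to $H$. Each of the three families of subgroups decomposes as a disjoint union of a \emph{rotation part}, namely the cyclic group $\braket{r^{n/\alpha}} = \set{r^{jn/\alpha} : 0 \leqslant j \leqslant \alpha-1}$ of order $\alpha$, and (in the latter two families) a \emph{reflection part}, the coset of $\alpha$ reflections obtained by multiplying the rotations by $s$ or by $sr$. I would therefore evaluate the two partial character sums separately and then combine them, dividing by $\alpha$ in the first row and by $2\alpha$ in the other two.

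For the degree-two representations $V = \rho^{n/q}$ the computation is immediate once one observes that $\chi_{\rho^{n/q}}$ vanishes on every reflection: the matrix $\rho^{n/q}(sr^k)$ is off-diagonal, hence has trace zero. Thus only the rotation part contributes, and using $\zeta_n^{jn/\alpha} = \zeta_\alpha^{j}$ together with the orthogonality relation $\tfrac{1}{\alpha}\sum_{j=0}^{\alpha-1}\zeta_\alpha^{(n/q)j} = 1$ when $\alpha \mid n/q$ and $0$ otherwise, one obtains $\tfrac{1}{\alpha}\sum_{j}\chi_{\rho^{n/q}}(r^{jn/\alpha}) = 2\varepsilon$. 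Dividing by the order of $H$ yields the stated values $2\varepsilon$ and $\varepsilon$.

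For the four degree-one representations the characters take values in $\set{\pm 1}$, so the sums are elementary. On the rotation part, $\chi_{\psi_1} = \chi_{\psi_2} \equiv 1$ gives contribution $\alpha$, whereas $\chi_{\psi_3}$ and $\chi_{\psi_4}$ equal $(-1)^{jn/\alpha}$, whose sum over $j$ equals $\alpha$ when $n/\alpha$ is even and $0$ when $n/\alpha$ is odd, that is $\alpha\delta$. On the reflection parts one evaluates $\chi_{\psi_i}$ on $sr^{jn/\alpha}$ and on $sr^{\,1+jn/\alpha}$ similarly; the signs coming from $\psi_2,\psi_3,\psi_4$ then either reinforce or cancel the rotation contribution, producing exactly the entries $1,0,\delta,0$ and $1,0,0,\delta$ in the last two rows.

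The only point requiring care, and the main obstacle such as it is, is the parity bookkeeping for $\psi_3$ and $\psi_4$ when $n/\alpha$ is odd: here one must note that $\alpha$ is then necessarily even (since $n = \alpha\cdot(n/\alpha)$ and $n$ is even in the rows where $\psi_3,\psi_4$ occur), so that the alternating sums $\sum_{j=0}^{\alpha-1}(-1)^{j}$ over the rotation and reflection parts vanish. With this observation every case reduces to the routine evaluation of the two partial character sums, which completes the table.
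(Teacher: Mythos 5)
Your proof is correct. There is nothing in the paper to compare it against: the authors state this lemma without proof, introducing it with the remark that the dimensions $d_V^H$ are ``known for the dihedral groups,'' so your job was to supply the standard verification, and you have done so. The averaging formula $d_V^H = \tfrac{1}{|H|}\sum_{h \in H}\chi_V(h)$ together with the rotation/reflection coset decomposition is exactly the right tool, and the key computational points all check out: the matrices $\rho^{n/q}(sr^k)$ are anti-diagonal, so only the rotation part contributes to the degree-two columns, and the geometric sum $\sum_{j=0}^{\alpha-1}\zeta_\alpha^{(n/q)j}$ detects whether $\alpha$ divides $n/q$, giving $2\varepsilon$ for $\braket{r^{n/\alpha}}$ and $\varepsilon$ after dividing by $2\alpha$ in the other two rows. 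Your parity observation is also the genuinely delicate point and you handle it correctly: $\psi_3,\psi_4$ only exist when $n$ is even, so if $n/\alpha$ is odd then $\alpha$ is necessarily even and the alternating sums $\sum_{j}(-1)^{j}$ over both the rotation coset and the reflection coset vanish, producing the entries $\delta$; while for $n/\alpha$ even the reflection values of $\psi_3$ on $\braket{sr,r^{n/\alpha}}$ (namely $(-1)^{1+jn/\alpha}=-1$) cancel the rotation contribution, and those of $\psi_4$ reinforce it, which is precisely the asymmetry between the last two rows of the table.
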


Now, we derive expressions for the decompositions of $JS_H$ and $P(\pi_K^H)$ induced by the group algebra decomposition of $JS$.
For $\alpha \in \mathbb{Z}^{|n}$, let
\begin{equation}
    H_\alpha = \braket{s, r^{n/\alpha}},\
    K_\alpha = \braket{sr, r^{n/\alpha}}\ \text{and}\
    C_\alpha = \braket{r^{n/\alpha}},
\end{equation}
be subgroups of $\mathbf{D}_n$.
In fact, they cover all of the subgroups of $\mathbf{D}_n$ modulo conjugation.
Note that $H_n = K_n = \mathbf{D}_n$.
If $\alpha$ is a proper divisor of $\beta \in \mathbb{Z}^{|n}$ then
\begin{equation}
    C_\alpha < H_\alpha < H_\beta,\
    C_\alpha < K_\alpha < K_\beta\ \text{and}\
    C_\alpha < C_\beta,
\end{equation}
with associated intermediate coverings
\begin{equation}
    S_{C_\alpha} \to S_{H_\alpha} \to S_{H_\beta},\
    S_{C_\alpha} \to S_{K_\alpha} \to S_{K_\beta}\ \text{and}\
    S_{C_\alpha} \to S_{C_\beta}.
\end{equation}
We observe that, modulo conjugation, all possible group inclusions and intermediate coverings are depicted above. 

\begin{pro}
    For $\alpha \in \mathbb{Z}^{|n}$ we write $Q_\alpha = \mathbb{Z}^{|n/\alpha} - \set{1,2}$.
    Then
    \begin{enumerate}
        \item
        $JS_{H_\alpha} \sim
        \begin{cases}
            JS_{\mathbf{D}_n} \times \prod_{q \in Q_\alpha} B(q) & \text{if}\ \frac{n}{\alpha}\ \text{is odd} \\
            JS_{\mathbf{D}_n} \times B_3 \times \prod_{q \in Q_\alpha} B(q) & \text{if}\ \frac{n}{\alpha}\ \text{is even}
        \end{cases}$
        \item 
        $JS_{K_\alpha} \sim
        \begin{cases}
            JS_{\mathbf{D}_n} \times \prod_{q \in Q_\alpha} B(q) & \text{if}\ \frac{n}{\alpha}\ \text{is odd} \\
            JS_{\mathbf{D}_n} \times B_4 \times \prod_{q \in Q_\alpha} B(q) & \text{if}\ \frac{n}{\alpha}\ \text{is even}
        \end{cases}$
        \item
        $JS_{C_\alpha} \sim
        \begin{cases}
            JS_{\mathbf{D}_n} \times B_2 \times \prod_{q \in Q_\alpha} B(q)^2 & \text{if}\ \frac{n}{\alpha}\ \text{is odd} \\
            JS_{\mathbf{D}_n} \times B_2 \times B_3 \times B_4 \times \prod_{q \in Q_\alpha} B(q)^2 & \text{if}\ \frac{n}{\alpha}\ \text{is even}
        \end{cases}$
    \end{enumerate}
    If $n$ is odd then we just discard the components $B_3$ and $B_4$ ($\tfrac{n}{\alpha}$ is odd).
\end{pro}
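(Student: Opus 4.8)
The plan is to compute each decomposition directly from the general framework established in the preliminaries, specializing formula~\eqref{indu} to the subgroups $H_\alpha$, $K_\alpha$, and $C_\alpha$. Recall that~\eqref{indu} states that $JS_H \sim JS_{\mathbf{D}_n} \times B_2^{u_2} \times \cdots \times B_r^{u_r}$, where the multiplicity of the component associated to the irreducible $\mathbb{Q}$-representation $W_j$ is $u_j = d_{V_j}^{H}/s_{V_j}$, with $V_j$ the Galois associated $\mathbb{C}$-representation. Since the Schur indices of all dihedral representations equal $1$, each $u_j$ reduces to $d_{V_j}^{H}$. Thus the entire computation amounts to reading off the fixed-point dimensions $d_V^H$ from Lemma~\ref{lem-dihedral_fix_dim} and translating them into multiplicities of the components $B_2, B_3, B_4$ and $B(q)$.

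First I would fix $\alpha \in \mathbb{Z}^{|n}$ and treat the three families of subgroups separately. For $H = H_\alpha = \braket{s, r^{n/\alpha}}$, Lemma~\ref{lem-dihedral_fix_dim} gives $d_{\psi_1}^{H_\alpha} = 1$, $d_{\psi_2}^{H_\alpha} = 0$, $d_{\psi_3}^{H_\alpha} = \delta$, $d_{\psi_4}^{H_\alpha} = 0$, and $d_{\rho^{n/q}}^{H_\alpha} = \varepsilon$, where $\delta = 1$ exactly when $n/\alpha$ is even and $\varepsilon = 1$ exactly when $\alpha \mid n/q$. The vanishing of the $\psi_2$ and $\psi_4$ multiplicities explains why those components never appear, while the $\psi_3$ multiplicity $\delta$ accounts for the case distinction on the parity of $n/\alpha$. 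For the higher-degree components, the condition $\alpha \mid n/q$ is precisely the statement $q \in \mathbb{Z}^{|n/\alpha}$; excluding $q \in \set{1,2}$ yields the index set $Q_\alpha = \mathbb{Z}^{|n/\alpha} - \set{1,2}$, and since $d_{\rho^{n/q}}^{H_\alpha} = 1$ for each such $q$, every $B(q)$ with $q \in Q_\alpha$ appears with multiplicity one. The computation for $K_\alpha$ is entirely symmetric, with the roles of $\psi_3$ and $\psi_4$ interchanged (reflecting that $K_\alpha$ contains $sr$ rather than $s$), so $B_4$ replaces $B_3$.

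For $C_\alpha = \braket{r^{n/\alpha}}$ the fixed dimensions from Lemma~\ref{lem-dihedral_fix_dim} are larger: $d_{\psi_1}^{C_\alpha} = d_{\psi_2}^{C_\alpha} = 1$, $d_{\psi_3}^{C_\alpha} = d_{\psi_4}^{C_\alpha} = \delta$, and $d_{\rho^{n/q}}^{C_\alpha} = 2\varepsilon$. Hence $B_2$ always appears with multiplicity one, $B_3$ and $B_4$ each appear with multiplicity $\delta$ (so both are present precisely when $n/\alpha$ is even), and each $B(q)$ for $q \in Q_\alpha$ appears with multiplicity $2$, matching the squared factors in the statement. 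I would then assemble these multiplicities into the three displayed isogenies, noting in each case that when $n$ is odd the relevant $\delta$ vanishes and the $\psi_3$, $\psi_4$ columns are omitted, so $B_3$ and $B_4$ are simply discarded. The proof is essentially bookkeeping; the only mildly delicate point—what I expect to be the one place requiring care rather than obstruction—is the identification of the divisibility condition $\alpha \mid n/q$ with membership $q \in \mathbb{Z}^{|n/\alpha}$, and the correct handling of the excluded divisors $1$ and $2$ so that the index set is exactly $Q_\alpha$.
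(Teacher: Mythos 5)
Your proposal is correct and follows exactly the paper's argument: specialize the induced decomposition \eqref{indu} to $H_\alpha$, $K_\alpha$, $C_\alpha$ (using that dihedral Schur indices are $1$), read off the multiplicities $u_j = d_{V_j}^{H}$ from Lemma~\ref{lem-dihedral_fix_dim}, and translate $\alpha \mid n/q$ into $q \in \mathbb{Z}^{|n/\alpha}$ to obtain the index set $Q_\alpha$. The only difference is cosmetic: the paper writes out case (1) for $n$ even and declares the remaining cases analogous, while you carry out the bookkeeping for all three families explicitly.
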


\begin{proof}
    Assume that $n$ is even.
    By \eqref{indu}, we have that
    \begin{equation}\textstyle
        JS_{H_\alpha} \sim JS_{\mathbf{D}_n} \times B_2^{u_2} \times B_3^{u_3} \times B_4^{u_4} \times \prod_{q \in \mathbb{Z}^{|n} - \set{1,2}} B(q)^{u(q)},
    \end{equation}
    where $u_j = d_{\psi_j}^{H_\alpha}$ for $j \in \set{2,3,4}$, and $u(q) = d_{\rho^{n/q}}^{H_\alpha}$ for $q \in \mathbb{Z}^{|n} - \set{1,2}$.
    By Lemma~\ref{lem-dihedral_fix_dim}, one has that $u_2 = 0$, $u_3 = \delta$, $u_4 = 0$ and $u(q) = \varepsilon(q)$, where
    \begin{equation}
        \delta =
        \begin{cases}
            1 & \text{if}\ \frac{n}{\alpha}\ \text{is even} \\
            0 & \text{if}\ \frac{n}{\alpha}\ \text{is odd}
        \end{cases}
        \quad \text{and} \quad
        \varepsilon(q) =
        \begin{cases}
            1 & \text{if}\ q \in \mathbb{Z}^{|n/\alpha} \\
            0 & \text{if}\ q \notin \mathbb{Z}^{|n/\alpha}
        \end{cases}
    \end{equation}
    and the isogeny (1) follows.
    The conclusion for the remaining cases, as well as for the case $n$ odd, are obtained analogously. 
\end{proof}

In a very similar way, one has the following result (whose proof we omit for the sake of conciseness of the exposition).

\begin{pro}\label{pro:dihedral_Prym_dec}
    For $\alpha, \beta \in \mathbb{Z}^{|n}$ we write $$Q_{\alpha,\beta} = (\mathbb{Z}^{|n/\alpha} - \mathbb{Z}^{|n/\beta}) - \set{2}.$$
    If $\alpha$ is a proper divisor of $\beta$, then
    \begin{enumerate}
        \item $P(\pi_{H_\beta}^{H_\alpha}) \sim
        \begin{cases}
            B_3 \times \prod_{q \in Q_{\alpha, \beta}} B(q) & \text{if}\ \frac{n}{\alpha} + \frac{n}{\beta}\ \text{is odd} \\
            \prod_{q \in Q_{\alpha, \beta}} B(q) & \text{if}\ \frac{n}{\alpha} + \frac{n}{\beta}\ \text{is even}        
        \end{cases}$
        \item $P(\pi_{K_\beta}^{K_\alpha}) \sim
        \begin{cases}
            B_4 \times \prod_{q \in Q_{\alpha, \beta}} B(q) & \text{if}\ \frac{n}{\alpha} + \frac{n}{\beta}\ \text{is odd} \\
            \prod_{q \in Q_{\alpha, \beta}} B(q) & \text{if}\ \frac{n}{\alpha} + \frac{n}{\beta}\ \text{is even}
        \end{cases}$
        \item $P(\pi_{C_\beta}^{C_\alpha}) \sim P(\pi_{H_\beta}^{H_\alpha}) \times P(\pi_{K_\beta}^{K_\alpha})$.
        \item $P(\pi_{H_\beta}^{C_\alpha}) \sim B_2 \times P(\pi_{H_\beta}^{H_\alpha}) \times P(\pi_{\mathbf{D}_n}^{K_\alpha})$.
        \item $P(\pi_{K_\beta}^{C_\alpha}) \sim B_2 \times P(\pi_{K_\beta}^{K_\alpha}) \times P(\pi_{\mathbf{D}_n}^{H_\alpha})$.
    \end{enumerate}
    If $n$ is odd then we just discard the components $B_3$ and $B_4$ ($\tfrac{n}{\alpha} + \tfrac{n}{\beta}$ is even).
\end{pro}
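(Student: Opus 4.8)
The plan is to reduce every assertion to the Prym isogeny formula recalled in the preliminaries of this section: for subgroups $H \leqslant K$ of $\mathbf{D}_n$, the exponent of a group algebra component in $P(\pi_K^H)$ equals $(d_V^H - d_V^K)/s_V$, where $V$ is the $\mathbb{C}$-representation Galois associated to that component. Since every irreducible $\mathbb{C}$-representation of $\mathbf{D}_n$ has Schur index $s_V = 1$, these exponents are just differences of fixed-space dimensions. Thus the exponent of $B(q)$ in $P(\pi_K^H)$ is $d_{\rho^{n/q}}^H - d_{\rho^{n/q}}^K$, and that of $B_j$ is $d_{\psi_j}^H - d_{\psi_j}^K$, and everything follows by reading these dimensions off Lemma~\ref{lem-dihedral_fix_dim} and matching exponents on the two sides. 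Throughout I would use the translations $d_{\rho^{n/q}}^{\langle s, r^{n/\alpha}\rangle} = 1$ iff $q \in \mathbb{Z}^{|n/\alpha}$ and $d_{\psi_3}^{\langle s, r^{n/\alpha}\rangle} = 1$ iff $\tfrac{n}{\alpha}$ is even, with the analogous statements for $K_\alpha$ and $C_\alpha$, together with the fact that $\alpha \mid \beta$ forces $\mathbb{Z}^{|n/\beta} \subseteq \mathbb{Z}^{|n/\alpha}$ and $\tfrac{n}{\beta}$ even $\Rightarrow \tfrac{n}{\alpha}$ even.

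For the building blocks (1) and (2), the $B(q)$-exponent of $P(\pi_{H_\beta}^{H_\alpha})$ is the indicator of $q \in \mathbb{Z}^{|n/\alpha}\setminus \mathbb{Z}^{|n/\beta}$, which produces the product over $Q_{\alpha,\beta}$. On the degree-one side, $\psi_2$ and $\psi_4$ are fixed-point free on every $H_\gamma$, while $d_{\psi_3}^{H_\alpha} - d_{\psi_3}^{H_\beta}$ is a difference of parity indicators; the parity nesting forces it to be $0$ or $1$, equal to $1$ exactly when $\tfrac{n}{\alpha} + \tfrac{n}{\beta}$ is odd. This gives (1), and (2) is identical after exchanging $\psi_3$ and $\psi_4$, i.e.\ applying the outer automorphism $s \mapsto sr$. (For odd $n$ the representations $\psi_3,\psi_4$ are absent and every $\tfrac{n}{\gamma}$ is odd, so these degree-one terms never occur, consistent with the final clause of the statement.)

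For the composite statements I would isolate two additive identities among the fixed dimensions of Lemma~\ref{lem-dihedral_fix_dim}, valid for every nontrivial $V \in \operatorname{Irr}_\mathbb{C}(\mathbf{D}_n)$: first, $d_V^{C_\gamma} = d_V^{H_\gamma} + d_V^{K_\gamma}$ for $V \in \set{\psi_3, \psi_4, \rho^{n/q}}$ (while $d_{\psi_2}^{C_\gamma} = 1$ is constant in $\gamma$); and second, $d_V^{C_\alpha} = d_V^{H_\alpha} + d_V^{K_\alpha} + \epsilon_V$, where $\epsilon_V = 1$ if $V = \psi_2$ and $\epsilon_V = 0$ otherwise. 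The first identity makes the $V$-exponent of $P(\pi_{C_\beta}^{C_\alpha})$ equal to the sum of the $V$-exponents of $P(\pi_{H_\beta}^{H_\alpha})$ and $P(\pi_{K_\beta}^{K_\alpha})$ (the $\psi_2$-exponents vanishing on both sides), which is exactly statement (3). For (4) and (5) I would combine the second identity with $d_V^{\mathbf{D}_n} = 0$ for nontrivial $V$: substituting it into the left exponent $d_V^{C_\alpha} - d_V^{H_\beta}$ of (4) (resp.\ $d_V^{C_\alpha} - d_V^{K_\beta}$ of (5)) exhibits it as the $B_2$-contribution $\epsilon_V$, plus the exponent of $P(\pi_{H_\beta}^{H_\alpha})$ (resp.\ $P(\pi_{K_\beta}^{K_\alpha})$), plus that of $P(\pi_{\mathbf{D}_n}^{K_\alpha})$ (resp.\ $P(\pi_{\mathbf{D}_n}^{H_\alpha})$), the last two read off from formulas (1) and (2) with $\beta = n$ and $H_n = K_n = \mathbf{D}_n$.

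I expect the only real friction to be the parity bookkeeping for the degree-one components $B_2, B_3, B_4$: one must check, case by case on the parities of $\tfrac{n}{\alpha}$ and $\tfrac{n}{\beta}$, that the extra $B_3$ or $B_4$ appears precisely when the stated parity condition holds, and confirm that the single copy of $B_2$ in (4) and (5) is accounted for by $\epsilon_{\psi_2} = 1$ rather than being double-counted. The degree-two part $\prod_q B(q)$ is comparatively automatic once the membership translation for $\varepsilon$ is in place, since it only involves the nested divisor sets defining $Q_{\alpha,\beta}$.
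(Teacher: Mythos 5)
Your proposal is correct and is essentially the argument the paper intends: the paper omits the proof, stating it follows ``in a very similar way'' as the preceding proposition, which is proved exactly as you do — via the Prym exponent formula $t_j = (d_{V_j}^H - d_{V_j}^K)/s_{V_j}$ (with all Schur indices equal to $1$) and the fixed-dimension table of Lemma~\ref{lem-dihedral_fix_dim}. Your additive identities $d_V^{C_\gamma} = d_V^{H_\gamma} + d_V^{K_\gamma} + \epsilon_V$ and the parity-nesting check for $\psi_3,\psi_4$ correctly reproduce all five isogenies, including the single $B_2$ in (4) and (5).
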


The following corollary, which was proved in \cite{@CaroccaEtAl02}, follows directly from the previous proposition.

\begin{cor}\label{cor:Prym_1deg_comp}
    Let $S$ be a compact Riemann surface of genus $g \geqslant 2$ with a $\mathbf{D}_n$-action.
    Then for each $n$ we have that $$P(\pi_{\mathbf{D}_n}^{\braket{r}}) \sim B_2.$$
    Moreover, if $n$ is even, then $$P(\pi_{\mathbf{D}_n}^{\braket{s,r^2}}) \sim B_3 \, \mbox{ and } \, P(\pi_{\mathbf{D}_n}^{\braket{sr,r^2}}) \sim B_4.$$
\end{cor}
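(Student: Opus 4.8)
The plan is to read off the desired Prym isogenies directly from Proposition~\ref{pro:dihedral_Prym_dec}, since the corollary concerns three very specific intermediate coverings that are all of the form $\pi_{\mathbf{D}_n}^{H}$ for a maximal subgroup $H$. The key observation is that $\mathbf{D}_n = H_n = K_n$, so taking $\beta = n$ in the proposition turns each two-step cover into a cover onto the full quotient $S_{\mathbf{D}_n}$, and the index set $Q_{\alpha,\beta}$ collapses to something empty, leaving only a single degree-one component.

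\begin{proof}
    The three coverings in the statement are special cases of those treated in Proposition~\ref{pro:dihedral_Prym_dec}, obtained by taking $\beta = n$ so that $H_\beta = K_\beta = \mathbf{D}_n$.

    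First consider $P(\pi_{\mathbf{D}_n}^{\braket{r}})$. We have $\braket{r} = C_n$, and $\pi_{\mathbf{D}_n}^{\braket{r}} = \pi_{C_n}^{?}$ does not fit the two-step pattern directly; instead we use that $\braket{r}$ has index $2$ in $\mathbf{D}_n$, so the covering $S_{\braket{r}} \to S_{\mathbf{D}_n}$ is of degree $2$. By the decomposition of $JS_{\braket{r}} = JS_{C_n}$ in the previous proposition (case (3) with $\alpha = n$), together with $JS_{\mathbf{D}_n} = B_1$, the Prym variety $P(\pi_{\mathbf{D}_n}^{\braket{r}})$ is the complement of $JS_{\mathbf{D}_n}$ inside $JS_{\braket{r}}$, which is precisely $B_2$. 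This gives $P(\pi_{\mathbf{D}_n}^{\braket{r}}) \sim B_2$ for every $n$.

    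Now assume $n$ is even and consider $P(\pi_{\mathbf{D}_n}^{\braket{s,r^2}})$. Here $\braket{s,r^2} = H_{n/2}$, since $r^{n/(n/2)} = r^2$, and $\mathbf{D}_n = H_n$. Apply Proposition~\ref{pro:dihedral_Prym_dec}(1) with $\alpha = n/2$ and $\beta = n$. Then $\frac{n}{\alpha} + \frac{n}{\beta} = 2 + 1 = 3$ is odd, and the index set is
    \begin{equation}
        Q_{n/2,\, n} = (\mathbb{Z}^{|2} - \mathbb{Z}^{|1}) - \set{2} = \set{2} - \set{2} = \emptyset.
    \end{equation}
    Hence the product over $Q_{n/2,n}$ is trivial, and the proposition yields $P(\pi_{H_n}^{H_{n/2}}) \sim B_3$, that is, $P(\pi_{\mathbf{D}_n}^{\braket{s,r^2}}) \sim B_3$. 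The identical argument applied to Proposition~\ref{pro:dihedral_Prym_dec}(2), with $K_{n/2} = \braket{sr, r^2}$ and $K_n = \mathbf{D}_n$, gives $P(\pi_{\mathbf{D}_n}^{\braket{sr,r^2}}) \sim B_4$, completing the proof.
\end{proof}

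The only subtlety worth flagging is bookkeeping: one must correctly identify the named subgroups $\braket{r}, \braket{s,r^2}, \braket{sr,r^2}$ with the $C_\alpha, H_\alpha, K_\alpha$ notation at the appropriate values of $\alpha$, and verify that the parity condition $\frac{n}{\alpha} + \frac{n}{\beta}$ selects the branch of Proposition~\ref{pro:dihedral_Prym_dec} containing the degree-one factor $B_3$ or $B_4$. The computation that $Q_{\alpha,\beta}$ is empty is the crux that forces the Prym variety to reduce to a single component; everything else is substitution.
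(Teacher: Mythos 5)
Your proof is correct and is essentially the paper's own argument: the paper proves this corollary simply by observing that it ``follows directly from the previous proposition,'' which is exactly the bookkeeping you carry out, and your identifications $\braket{s,r^2}=H_{n/2}$, $\braket{sr,r^2}=K_{n/2}$, the parity check $\tfrac{n}{\alpha}+\tfrac{n}{\beta}=2+1$ odd, and the computation $Q_{n/2,\,n}=\emptyset$ are all accurate. Your detour for the $B_2$ case is also a sound (and necessary) fix, since the hypothesis that $\alpha$ be a \emph{proper} divisor of $\beta$ in Proposition~\ref{pro:dihedral_Prym_dec} blocks taking $\alpha=\beta=n$ there: routing through $JS_{C_n}\sim JS_{\mathbf{D}_n}\times B_2$ and taking the complement is legitimate because cancellation up to isogeny holds by Poincar\'e complete reducibility, or, even more directly, one can apply the formula $P(\pi_K^H)\sim B_2^{t_2}\times\cdots\times B_r^{t_r}$ with $H=\braket{r}$, $K=\mathbf{D}_n$ and Lemma~\ref{lem-dihedral_fix_dim}, which gives $t_2=1$ and $t_j=0$ otherwise.
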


\subsection*{Prym affordable Jacobians}
\label{sec:Prym_aff}

Let $S$ be a compact Riemann surface of genus $g \geqslant 2$ with a $G$-action.
The group algebra decomposition of $JS$ is a powerful tool to study the following two questions:

\begin{que}
    When does the Jacobian $JS$ decomposes as a product of Jacobians of quotients of $S$? 
\end{que}

\begin{que}
    When does the Jacobian $JS$ decomposes as a product of $JS_G$ and Prym varieties of intermediate coverings of $\pi: S \to S_G$?
\end{que}

Preceding the group algebra decomposition, Kani and Rosen in \cite{@KaniRosen89} provided a partial answer for the first question. Later,  their results were generalized in \cite{@Reyes-CaroccaRodriguez19}.
The second question has been recently considered by Moraga in \cite{@Moraga24} for actions of affine groups over finite fields.

\s

    Following \cite{@Moraga24}, the group algebra decomposition of $JS$ with respect to $G$ is called \emph{affordable by Prym varieties} if each group algebra component of $JS$ with respect to $G$ is isogenous to the Prym variety of an intermediate covering.

\s
Here we introduce a more restrictive definition.

\begin{defi}
    A group $G$ is \emph{Prym-affordable} if every group algebra decomposition of a Jacobian with respect to $G$ is affordable by Prym varieties.
\end{defi}

Partial results for the problem of determining when $\mathbf{D}_n$ is Prym-affordable have been obtained.
Concretely, Carocca, Recillas  and Rodr\'iguez proved in \cite[Theorems 6.4 and 7.1]{@CaroccaEtAl02} that $\mathbf{D}_p$ ($p\geq3$ prime) and $\mathbf{D}_{2^e}$ ($e\geq2)$ are Prym-affordable.
By contrast, Lange and Recillas in \cite[§4.4]{@LangeRecillas04a} pointed out that $\mathbf{D}_{2p}$ is not Prym-affordable.

\s

The following result gives a complete answer to this  problem.

\begin{thm}\label{thm:dihedral_Prym_affordable}
    The dihedral group $\mathbf{D}_n$ is Prym-affordable if and only if $n=p^e$ for $p$ prime and $e \geqslant 1$.
\end{thm}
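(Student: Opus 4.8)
The plan is to prove the two directions separately, exploiting the explicit decompositions of intermediate-cover Prym varieties established in Proposition~\ref{pro:dihedral_Prym_dec}. Recall that $\mathbf{D}_n$ is Prym-affordable means that for \emph{every} $\mathbf{D}_n$-action, each group algebra component $B_2, B_3, B_4$ and $B(q)$ (for $q \in \mathbb{Z}^{|n}-\set{1,2}$) is isogenous to the Prym variety of some intermediate covering. The components $B_2$, $B_3$, $B_4$ are never problematic, since Corollary~\ref{cor:Prym_1deg_comp} already exhibits them as Pryms of explicit coverings. Thus the whole question reduces to the higher-degree components $B(q)$: the group is Prym-affordable if and only if, for every $q \in \mathbb{Z}^{|n}-\set{1,2}$ and every action, the single factor $B(q)$ can be isolated as a Prym variety.

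\textbf{Sufficiency ($n=p^e$).} When $n = p^e$, the divisor lattice $\mathbb{Z}^{|n}$ is a chain $1 \mid p \mid p^2 \mid \cdots \mid p^e$, so the index sets $Q_{\alpha,\beta} = (\mathbb{Z}^{|n/\alpha} - \mathbb{Z}^{|n/\beta}) - \set{2}$ appearing in Proposition~\ref{pro:dihedral_Prym_dec} are totally ordered and can be made singletons. Concretely, I would choose consecutive divisors so that $\mathbb{Z}^{|n/\alpha}$ and $\mathbb{Z}^{|n/\beta}$ differ by exactly one admissible index $q$; then item~(1) or~(2) of Proposition~\ref{pro:dihedral_Prym_dec} gives $P(\pi_{H_\beta}^{H_\alpha}) \sim B(q)$ (in the even-difference case) or $\sim B_3 \times B(q)$, and in the latter situation I would pass to the $C$-tower and use item~(3) or a suitable $H$-versus-$K$ comparison to cancel the degree-one factor. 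Because the lattice is a chain, every $B(q)$ is reachable this way, and together with $B_2,B_3,B_4$ from Corollary~\ref{cor:Prym_1deg_comp} this shows every component is a Prym.

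\textbf{Necessity ($n \neq p^e$).} Here the strategy is to exhibit, for each $n$ with at least two distinct prime factors, a specific $\mathbf{D}_n$-action whose group algebra decomposition is \emph{not} affordable by Prym varieties. Write $n = p_1^{a_1}\cdots p_r^{a_r}$ with $r \geqslant 2$. The obstruction is that $Q_{\alpha,\beta}$ can no longer be reduced to a singleton once the divisor lattice fails to be a chain: removing a single prime from $n/\alpha$ always strips off several divisors at once, so every candidate Prym $P(\pi_{H_\beta}^{H_\alpha})$ is isogenous to a product $\prod_{q \in Q_{\alpha,\beta}} B(q)$ involving at least two distinct components $B(q)$, and one cannot isolate the "maximal" component $B(n)$ (associated to $W(n) = \oplus_\sigma \rho^1$). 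I would select an action (via the realization criteria of Theorem~\ref{thm-existence_even_0}, Theorem~\ref{thm-existence_even_>0}, or Theorem~\ref{thm-dihedral_rep_existence_odd}) for which all the relevant $B(q)$ are nonzero and pairwise non-isogenous of distinct dimensions, using Proposition~\ref{pro-dihedral_dim_B(q)} to compute $\dim B(q) = \tfrac12 \phi(q)\braket{\rho_a,\rho^{n/q}}$ and force these dimensions apart. Then no intermediate-cover Prym from the exhaustive list in Proposition~\ref{pro:dihedral_Prym_dec} can be isogenous to $B(n)$ alone.

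\textbf{The main obstacle} will be the necessity direction: I must produce a genuinely non-affordable action rather than merely observe that the naive towers fail, and this requires (i) verifying that \emph{every} intermediate covering—not just the generic ones—gives a Prym that is a product of at least two distinct $B(q)$'s, which means checking the full list (1)--(5) of Proposition~\ref{pro:dihedral_Prym_dec} including the mixed $C$-to-$H$ coverings, and (ii) ruling out accidental isogenies by arranging the dimensions $\dim B(q)$ to be distinct via a careful choice of signature. Controlling the dimensions simultaneously with the existence constraints (parity conditions on $a,b,A$, the $\operatorname{lcm}$ conditions) is the delicate combinatorial heart of the argument, and I would isolate the cleanest witness—likely an action of small genus with signature supported on the divisors $n$ and $n/p_i$—to keep the dimension comparison transparent.
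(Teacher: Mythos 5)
Your sufficiency direction is essentially the paper's: for $n=p^e$ one takes the consecutive-divisor tower $\alpha_j = n/p^j$, $\beta_j = n/p^{j-1}$, so that $Q_{\alpha_j,\beta_j} = \mathbb{Z}^{|p^j} - \mathbb{Z}^{|p^{j-1}} = \set{p^j}$ and Proposition~\ref{pro:dihedral_Prym_dec}(1) yields $P(\pi_{H_{\beta_j}}^{H_{\alpha_j}}) \sim B(p^j)$, with $B_2,B_3,B_4$ covered by Corollary~\ref{cor:Prym_1deg_comp}. One caution: your fallback of ``passing to the $C$-tower to cancel the degree-one factor'' is not an available operation, since Proposition~\ref{pro:dihedral_Prym_dec}(3) gives $P(\pi_{C_\beta}^{C_\alpha}) \sim P(\pi_{H_\beta}^{H_\alpha}) \times P(\pi_{K_\beta}^{K_\alpha})$ --- it only adds factors, never removes them. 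Fortunately the case you were guarding against is vacuous here: $\tfrac{n}{\alpha_j}+\tfrac{n}{\beta_j} = p^{j-1}(p+1)$ is even for odd $p$, and for $p=2$ the only odd sum occurs at $j=1$, where $Q_{\alpha_1,\beta_1}=\emptyset$ and $B(2)$ is not a component in any case.

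The genuine gap is in the necessity direction, in two respects. First, your central combinatorial claim --- that once $n$ has two distinct prime factors, every candidate Prym is a product involving at least two distinct components $B(q)$ --- is false: for $n \in 2p\mathbb{Z}$ one can have singletons, e.g.\ for $n=2p$ with $\alpha=1$, $\beta=2$ one gets $Q_{1,2} = (\mathbb{Z}^{|2p} - \mathbb{Z}^{|p}) - \set{2} = \set{2p}$. The actual obstruction in that regime is parity: the paper shows $\tfrac{n}{\alpha}+\tfrac{n}{\beta}$ must be odd there, so Proposition~\ref{pro:dihedral_Prym_dec}(1)--(2) forces $P \sim B_3 \times B(2p)$ or $B_4 \times B(2p)$, and one needs $B_3,B_4$ of positive dimension to reach a contradiction; your sketch never engages this case, and it cannot be avoided since $n \in 2p\mathbb{Z}$ is one of the two exhaustive cases. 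Second, your witness-selection strategy runs backwards. Choosing a small-genus action with signature supported on few divisors typically kills components ($\dim B(q)=0$), and it is precisely then that a non-Prym-affordable group can still have an affordable decomposition: the paper's own example of $\mathbf{D}_{p^2q}$ with signature $(0;2,2,q,p^2)$ has $JS \sim B(pq)^2 \times B(p^2q)^2$ affordable by Pryms. The correct (and much simpler) move is the opposite of yours: take any ske with quotient genus $\gamma \geqslant 2$, so that by Chevalley--Weil and Proposition~\ref{pro-dihedral_dim_B(q)} \emph{every} component has positive dimension; then for $B(pq)$ with $p,q\geqslant 3$ the singleton $Q_{\alpha,\beta}=\set{pq}$ is combinatorially impossible (one of $p,q$ fails to divide $n/\beta$ and enters $Q_{\alpha,\beta}$), while for $B(2p)$ the parity phenomenon above produces the extra degree-one factor. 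No arrangement of pairwise distinct dimensions or non-isogeny is needed, and targeting $B(n)$ rather than $B(pq)$ or $B(2p)$ is what forces you into that unnecessary and delicate dimension bookkeeping.
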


\begin{proof}
    Let $\alpha, \beta \in \mathbb{Z}^{|n}$. Consider the set  \begin{equation}
        Q_{\alpha, \beta} = (\mathbb{Z}^{|n/\alpha} - \mathbb{Z}^{|n/\beta}) - \set{2},
    \end{equation}and the subgroup $H_\alpha = \braket{s, r^{n/\alpha}}$ of $\mathbf{D}_n$. Let $n = p^e$ for $p$ prime and $e \geqslant 1$, and assume that $\mathbf{D}_n$ acts on a compact Riemann surface $S$ of genus $g \geqslant 2$.
    After considering Corollary~\ref{cor:Prym_1deg_comp}, it suffices to prove that for each $q \in \mathbb{Z}^{|n} - \set{1,2}$, the subvariety $B(q)$ of $JS$ is isogenous to the Prym variety of an intermediate covering.
    Set $\alpha_j = n/p^j$ and $\beta_j = n/p^{j-1}$ for $j \in \set{1, \ldots, e}$.
    Observe that
    \begin{equation}
        Q_{\alpha_j, \beta_j} = \mathbb{Z}^{|p^j} - \mathbb{Z}^{|p^{j-1}} = \set{p^j}.
    \end{equation}
    Then, by Proposition~\ref{pro:dihedral_Prym_dec}(1), one has that
    \begin{equation}\textstyle
        P(\pi_{H_{\beta_j}}^{H_{\alpha_j}}) \sim \prod_{q \in Q_{\alpha_j, \beta_j}} B(q) \sim B(p^j),
    \end{equation}
    for $1 \leqslant j \leqslant e$ if $p\geq3$, and for $2\leqslant j \leqslant e$ if $p=2$, as desired.
    
    \s
    
    Conversely, assume that $n$ is not the power of a prime.
    Then, one of the following cases occurs:
    \begin{multicols}{2}
    \begin{enumerate}[label=(\roman*)]
        \item $n \in pq \mathbb{Z}$ for $p,q \geqslant 3$ primes;
        \item $n \in 2p \mathbb{Z}$ for $p \geqslant 3$ prime.
    \end{enumerate}
    \end{multicols}

    We now proceed to prove that for each of the cases above, given a suitable choice of action, there is a group algebra component that is not isogenous to the Prym variety of an intermediate covering.
    Let $\theta: \Delta \to \mathbf{D}_n$ be a ske such that the quotient surface has genus $\gamma \geqslant 2$.    Then, by the Chevalley-Weil formula together with Proposition~\ref{pro-dihedral_dim_B(q)}, one has that all the group algebra components (associated to the action of $\theta$) have positive dimension.
    
\s

(i)  Assume that $n \in pq \mathbb{Z}$.
        By Proposition~\ref{pro:dihedral_Prym_dec}, if $B(pq) \sim P(\pi_K^H)$ for $H<K$ subgroups of $\mathbf{D}_n$ then, as every group algebra component has positive dimension, there are integers $\alpha, \beta \in \mathbb{Z}^{|n}$, with $\alpha$ a proper divisor of $\beta$, such that
        \begin{equation}
            Q_{\alpha, \beta} = (\mathbb{Z}^{|n/\alpha} - \mathbb{Z}^{|n/\beta}) - \set{2} = \set{pq}.
        \end{equation}
        It follows that $pq$ divides $n/\alpha$ but does not divide $n/\beta$.
        Since $p$ and $q$ are prime, at least one of them does not divide $n/\beta$ and $Q_{\alpha, \beta}$ is not a singleton; a contradiction.

\s

(ii)  Assume that $n \in 2p \mathbb{Z}$.
        By Proposition~\ref{pro:dihedral_Prym_dec}, if $B(pq) \sim P(\pi_K^H)$ for $H<K$ subgroups of $\mathbf{D}_n$ (and every group algebra component has positive dimension), then there are integers $\alpha, \beta \in \mathbb{Z}^{|n}$, with $\alpha$ a proper divisor of $\beta$, such that
        \begin{equation}
            Q_{\alpha, \beta} = (\mathbb{Z}^{|n/\alpha} - \mathbb{Z}^{|n/\beta}) - \set{2} = \set{2p}.
        \end{equation}
        In particular, $2p$ is a divisor of $\tfrac{n}{\alpha}$ (even) but not of $\tfrac{n}{\beta}$.
        Since $p \notin Q_{\alpha, \beta}$ it follows that $p$ divides $\tfrac{n}{\beta}$ and therefore $2$ does not divide $\tfrac{n}{\beta}$ (odd).
        We conclude that $\tfrac{n}{\alpha} + \tfrac{n}{\beta}$ is odd and thus
        \begin{equation}
            P(\pi_H^K) \sim B_3 \times B(2p)\ \text{or}\ 
            P(\pi_H^K) \sim B_4 \times B(2p);
        \end{equation}
        a contradiction.

\s
    This concludes the proof.
\end{proof}

\s

Moving forward, we refine our analysis to provide conditions under which a given  group algebra component is isogenous to a Prym variety. In order to state the result, we need to introduce some notation.

\begin{defi}
    For each subset $Q \subset \mathbb{Z}_+$, we define the function
    \begin{equation}
        L_Q: \mathbb{Z}_+ \to \mathbb{Z}_+\
        \text{given by}\
        L_Q(q) = \operatorname{lcm}( Q \cap \mathbb{Z}^{|q} - \set{q} ).
    \end{equation}
\end{defi}

It is easy to prove from the definition above, the following facts.
\begin{lem}\label{lem:L_Q}
    Set $Q \subset \mathbb{Z}_+$ and $q \in \mathbb{Z}_+$.

    \begin{enumerate}
        \item $L_Q(q)$ is a divisor of $q$.
        \item $L_Q(q) \neq q$ if and only if $L_Q(q)$ is a proper divisor of $q$.
        \item if $q$ is the power of a prime number then $L_Q(q) \neq q$.
        \item if $q \in Q$ then $L_Q(q) \neq q$ if and only if $Q \cap \mathbb{Z}^{|q} - \mathbb{Z}^{|L_Q(q)} = \set{q}$.
        \item if $q \in Q$ and $L_Q(q) \neq q$, then $L_Q(q) = \gcd \set{t \in \mathbb{Z}^{|q}: Q \cap \mathbb{Z}^{|q} - \mathbb{Z}^{|t} = \set{q}}$.
    \end{enumerate}
\end{lem}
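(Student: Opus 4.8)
The plan is to treat the five statements in the order given, since each leans on the earlier ones. I would set $L := L_Q(q)$ and $D := Q \cap \mathbb{Z}^{|q} - \set{q}$, so that $L = \operatorname{lcm}(D)$ (with the convention $\operatorname{lcm}(\emptyset) = 1$). Every element of $D$ lies in $\mathbb{Z}^{|q}$ and hence divides $q$, and the least common multiple of a family of divisors of $q$ again divides $q$; statement (1) will follow at once. Statement (2) is then immediate, because a divisor of $q$ is either $q$ itself or a proper divisor, so $L \neq q$ means exactly that $L$ is a proper divisor.

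For statement (3) I would write $q = p^k$ with $p$ prime and $k \geqslant 1$. Every member of $D$ is a divisor of $p^k$ distinct from $p^k$, hence divides $p^{k-1}$; consequently $L$ divides $p^{k-1}$ and is a proper divisor of $q$, giving $L \neq q$.

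Statement (4) is the crux of the lemma, and the place where I expect to spend the most care, since statement (5) rests on it. Assuming $q \in Q$, for the forward implication I would suppose $L \neq q$, so by (2) $L$ is a proper divisor of $q$ and thus $q \nmid L$, placing $q$ in $Q \cap \mathbb{Z}^{|q} - \mathbb{Z}^{|L}$; on the other hand, every element of $D$ divides $L$ by the definition of the lcm, so no element of $Q \cap \mathbb{Z}^{|q}$ other than $q$ can fail to divide $L$. This yields $Q \cap \mathbb{Z}^{|q} - \mathbb{Z}^{|L} = \set{q}$. For the reverse implication I would argue contrapositively: if $L = q$ then $\mathbb{Z}^{|L} = \mathbb{Z}^{|q}$ and the difference set is empty, hence cannot equal $\set{q}$.

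Finally, for statement (5) I would assume $q \in Q$ and $L \neq q$, and write $T := \set{t \in \mathbb{Z}^{|q} : Q \cap \mathbb{Z}^{|q} - \mathbb{Z}^{|t} = \set{q}}$. Statement (4) gives $L \in T$, whence $\gcd T$ divides $L$. For the reverse, fixing $t \in T$, the defining condition says that every element of $Q \cap \mathbb{Z}^{|q}$ other than $q$---that is, every element of $D$---divides $t$, so $L = \operatorname{lcm}(D)$ divides $t$; as $t$ was arbitrary, $L$ is a common divisor of all members of $T$ and hence divides $\gcd T$. The two divisibilities give $L = \gcd T$. The only subtlety is to keep the roles of ``$L \in T$'' (yielding $\gcd T \mid L$) and ``$L$ divides every element of $T$'' (yielding $L \mid \gcd T$) distinct; the rest is routine bookkeeping with divisors.
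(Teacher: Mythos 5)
Your proof is correct: all five parts are verified directly from the definition $L_Q(q)=\operatorname{lcm}\bigl(Q\cap\mathbb{Z}^{|q}-\set{q}\bigr)$, with the key steps (membership of $q$ in the difference set when $L_Q(q)$ is a proper divisor, and the two divisibilities $\gcd T\mid L_Q(q)$ and $L_Q(q)\mid\gcd T$ in part (5)) handled cleanly, including the convention $\operatorname{lcm}(\emptyset)=1$. The paper gives no proof, stating only that the lemma follows easily from the definition, and your direct verification is exactly that intended argument.
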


\begin{defi}
    Let $\rho_a$ be the analytic representation of a dihedral action represented by a ske $\theta: \Delta \to \mathbf{D}_n$.
    We define
    \begin{equation}
        Q_\theta = \set{t \in \mathbb{Z}^{|n} - \set{1,2}: \braket{\rho_a, \rho^{n/t}} \geqslant 1}.
    \end{equation}
\end{defi}
Observe that  $q \in Q_{\theta}$ if and only if $\mbox{dim}B(q) \neq 0.$
\begin{thm}\label{thm:GAD_component_Prym}
    Let $n \geqslant 3$ be an odd integer, and let $S$ be a compact Riemann surface of genus $g \geqslant 2$ with a dihedral action represented by a surface-kernel epimorphism $\theta: \Delta \to \mathbf{D}_n$.
    For $q \in Q_\theta$, the subvariety $B(q)$ of $JS$ is isogenous to the Prym variety of an intermediate covering if and only if $L_{Q_{\theta}}(q) \neq q$.
    In this case,
    \begin{equation}
        B(q) \sim P(\pi_K^H)\ \text{for}\ H = \braket{s, r^q}\ \text{and}\ K = \braket{s, r^{L_{Q_\theta}(q)}}.
    \end{equation}
\end{thm}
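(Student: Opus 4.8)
The plan is to read off both implications from the explicit Prym decompositions of Proposition~\ref{pro:dihedral_Prym_dec}, comparing group algebra components in the same spirit as the proof of Theorem~\ref{thm:dihedral_Prym_affordable}. Throughout I will use that $n$ is odd, so that $\psi_3,\psi_4,B_3,B_4$ are absent, $2\nmid n$ (hence the term ``$-\set{2}$'' in $Q_{\alpha,\beta}$ is vacuous), and $\tfrac{n}{\alpha}+\tfrac{n}{\beta}$ is always even; I will also use that $q'\in Q_\theta$ exactly when $B(q')$ is nonzero, so that in each isogeny of Proposition~\ref{pro:dihedral_Prym_dec} only the factors indexed by $Q_\theta$ survive.

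For the sufficiency direction I would assume $L:=L_{Q_\theta}(q)\neq q$. By Lemma~\ref{lem:L_Q} the integer $L$ is then a proper divisor of $q$, so that $\alpha:=n/q$ is a proper divisor of $\beta:=n/L$; I set $H:=\braket{s,r^q}$ and $K:=\braket{s,r^L}$, noting $H\leqslant K$. Since $Q_{\alpha,\beta}=\mathbb{Z}^{|q}-\mathbb{Z}^{|L}$, Lemma~\ref{lem:L_Q}(4) applied to $Q_\theta$ (using $q\in Q_\theta$ and $L\neq q$) gives $Q_{\alpha,\beta}\cap Q_\theta=\set{q}$. Discarding the vanishing factors in Proposition~\ref{pro:dihedral_Prym_dec}(1) then yields $P(\pi_K^H)\sim\prod_{q'\in Q_{\alpha,\beta}}B(q')\sim B(q)$, which is the asserted covering.

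For the necessity direction I would assume $B(q)\sim P(\pi_K^H)$ for some intermediate covering and, after conjugating, reduce to one of the five types of Proposition~\ref{pro:dihedral_Prym_dec}. The strategy is to match $W(q')$-isotypic multiplicities, the left-hand side being a single copy of $B(q)$ with $q\in Q_\theta$. Type (3) is ruled out immediately, as there every nonzero $B(q')$ occurs with even multiplicity. In types (4) and (5) a nonzero $B_2$ contributes a $\psi_2$-part absent from $B(q)$, while if $B_2$ vanishes the decomposition still contains the factor $P(\pi_{\mathbf{D}_n}^{K_\alpha})$ (respectively $P(\pi_{\mathbf{D}_n}^{H_\alpha})$), which forces $\mathbb{Z}^{|n/\alpha}\cap Q_\theta=\set{q}$; since $q\mid\tfrac{n}{\alpha}$, this says that no proper divisor of $q$ lies in $Q_\theta$, so $L_{Q_\theta}(q)=1\neq q$. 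This leaves types (1) and (2), where $P(\pi_K^H)\sim\prod_{q'\in Q_{\alpha,\beta}}B(q')$ forces $Q_{\alpha,\beta}\cap Q_\theta=\set{q}$.

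The step I expect to be the main obstacle is the final divisibility bookkeeping, namely deducing $L_{Q_\theta}(q)\neq q$ from $Q_{\alpha,\beta}\cap Q_\theta=\set{q}$ in cases (1) and (2). Here $q\mid\tfrac{n}{\alpha}$ while $q\nmid\tfrac{n}{\beta}$, and for any proper divisor $q'\mid q$ with $q'\in Q_\theta$ one has $q'\mid\tfrac{n}{\alpha}$; were $q'\nmid\tfrac{n}{\beta}$, then $q'$ would lie in $Q_{\alpha,\beta}\cap Q_\theta=\set{q}$, forcing $q'=q$, a contradiction. Hence every such $q'$ divides $\tfrac{n}{\beta}$, so $L_{Q_\theta}(q)$ divides $\tfrac{n}{\beta}$; as $q\nmid\tfrac{n}{\beta}$ this gives $L_{Q_\theta}(q)\neq q$. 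Handling the five covering types uniformly, and being careful about the vanishing of $B_2$ in types (4) and (5), is where I expect the argument to require the most care; the sufficiency direction, by contrast, is a direct application of Lemma~\ref{lem:L_Q}(4) and Proposition~\ref{pro:dihedral_Prym_dec}(1).
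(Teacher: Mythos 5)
Your proposal is correct and follows essentially the same route as the paper: sufficiency via Lemma~\ref{lem:L_Q}(2),(4) and Proposition~\ref{pro:dihedral_Prym_dec}(1) with exactly the same choice $\alpha=n/q$, $\beta=n/L_{Q_\theta}(q)$, and necessity by reducing to the coverings of Proposition~\ref{pro:dihedral_Prym_dec} and extracting $Q_\theta\cap Q_{\alpha,\beta}=\set{q}$. The only cosmetic difference is that you carry out the elimination of types (3)--(5) and the final divisibility bookkeeping by hand, where the paper compresses this into ``we can assume $H=H_{n/q}$, $K=H_{n/t}$'' and a citation of Lemma~\ref{lem:L_Q}(5).
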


\begin{proof}
    Assume that $L_{Q_\theta}(q) \neq q$.
    Set $\alpha = n/q$ and $\beta = n/L_{Q_\theta}(q)$, and recall that $H_\alpha = \braket{s, r^{n/\alpha}}$ and $Q_{\alpha, \beta} = \mathbb{Z}^{|n/\alpha} - \mathbb{Z}^{|n/\beta}$.
    By Lemma~\ref{lem:L_Q}(2), $\alpha$ is a proper divisor of $\beta$ and therefore $H_\alpha < H_\beta$.
    Moreover, by Lemma~\ref{lem:L_Q}(4),
    \begin{equation}
        Q_\theta \cap Q_{\alpha, \beta} = Q_\theta \cap ( \mathbb{Z}^{|q} - \mathbb{Z}^{|L_{Q_\theta}(q)}) = \set{q}.
    \end{equation}
    Then Proposition~\ref{pro:dihedral_Prym_dec}(1) implies that
    \begin{equation}\textstyle
        P \big( \pi_{H_\beta}^{H_\alpha} \big) \sim
        \prod_{t \in Q_{\alpha, \beta}} B(t) \sim
        \prod_{t \in Q_\theta \cap Q_{\alpha, \beta}} B(t) \sim
        B(q).
    \end{equation}

    For the converse, assume that $B(q) \sim P(\pi_H^K)$ for $H<K$ subgroups of $\mathbf{D}_n$.
    By Proposition~\ref{pro:dihedral_Prym_dec}, we can assume that $H = H_{n/q}$ and $K = H_{n/t}$ with $t$ a proper divisor of $q$ such that
    \begin{equation}
        Q_\theta \cap (\mathbb{Z}^{|q} - \mathbb{Z}^{|t}) = \set{q}.
    \end{equation}
    By Lemma~\ref{lem:L_Q}(5), we conclude that $L_{Q_\theta}(q)$ divides $t$ and $L_{Q_\theta}(q) \neq q$.    
\end{proof}

We end this subsection by remarking that there are group algebra decompositions of Jacobians with respect to non Prym-affordable dihedral groups that are affordable by Prym varieties. We construct explicit examples below.

\s 

Consider a dihedral action in genus $g \geqslant 2$ represented by a ske $\theta: \Delta \to \mathbf{D}_n$ with $n$ odd, and assume that the signature of the action is
 $(\gamma; 2^t, m_1, \ldots, m_v)$.
By Lemma~\ref{lem-rho1}, we see that $n \in Q_\theta$ and therefore  $Q_\theta \neq \emptyset$. Moreover,  it can be seen that  $$\gamma \geqslant 1, \mbox{ or } \gamma = 0 \mbox{ and }t \geqslant 6 \implies Q_\theta = \mathbb{Z}^{|n} - \set{1,2}.$$
In other words, in these cases all the subvarieties $B(q)$ of $JS$ have positive dimension (Proposition~\ref{pro-dihedral_dim_B(q)}).
Consequently, if we search for examples of group algebra decompositions with respect  to non Prym-affordable dihedral groups that are affordable by Prym varieties, we need to consider the following cases:
\begin{align}
    (1)\ \gamma = 0\ \text{and}\ t=2, &&
    (2)\ \gamma = 0\ \text{and}\ t=4.
\end{align}
(Because we should have  dimension zero subvarieties.)

\s

{\it Example.} Let $p,q \geqslant 3$ be distinct prime numbers.
    The (non Prym-affordable) group $\mathbf{D}_{p^2q}$ acts in genus $(p^2-1)(q-1)$ with signature $(0; 2, 2, q, p^2)$.
    By Theorem~\ref{thm-dihedral_odd_formula}, the analytic representation of the action is $$\rho_a \cong W(p^2q) \oplus W(pq).$$
    Note that $Q_\theta = \set{pq, p^2q},$  $L_{Q_\theta}(pq) = 1$ and $L_{Q_\theta}(p^2q) = pq$.
    By Theorem~\ref{thm:GAD_component_Prym}, we conclude that
    \begin{equation}
        JS \sim
        B(pq)^2 \times B(p^2q)^2 \sim
        P \big( \pi_{D_{p^2q}}^{H_p} \big)^2 \times P \big( \pi_{H_p}^{H_1} \big)^2
    \end{equation}which is a Prym affordable group algebra decomposition.

\subsection*{Completely decomposable Jacobians}

We recall that an abelian variety is \emph{completely decomposable} if it is isogenous to a product of elliptic curves.
Ekedahl and Serre in \cite{@EkedahlSerre93} posed the following two questions:

\setcounter{que}{0}
\begin{que}
    Is it true that, for all positive integers $g$, there exists a compact Riemann surface of genus $g$ whose Jacobian is completely decomposable?
\end{que}

\begin{que}
    Is the set of genera for which a compact Riemann surface with completely decomposable Jacobian exists infinite?
\end{que}

Since the publication of \cite{@EkedahlSerre93}, the study of completely decomposable Jacobians has attracted considerable interest not only in complex and algebraic geometry, but also in number theory. 
See, for example,  \cite{@CaroccaEtAl14, @Earle06, @Kani94, @MagaardEtAl09, @Nakajima07}.
Despite recent advancements, the previous questions still remain open.

\s

Notably, Ekedahl and Serre provided a list of genera (up to 1297) for which there is an algebraic curve exhibiting completely decomposable Jacobian, but significant gaps remained.
To address this, Paulhus and Rojas in \cite{@PaulhusRojas17} used computational tools to systematically construct examples in new genera, relying on the group algebra decomposition of Jacobian varieties.
See also the recent article \cite{@RodriguezRojas24}.

\s

Here we determine all Riemann surfaces with dihedral action such that the group algebra decomposition yields a complete decomposition of the Jacobian.

\begin{thm}\label{thm:comp_JacDec}
    Let $n \geqslant 3$ be a positive integer, and let $S$ be a compact Riemann surface of genus $g \geqslant 2$ with a $\mathbf{D}_n$-action.
    The group algebra decomposition of $JS$ with respect to $\mathbf{D}_n$ provides a complete decomposition of $JS$ if and only if one of the following cases occurs:
    {\tiny 
    \begin{longtable}{|c|c|l|l|r|}
        \hline
        $n$ & $g$ & signature & geometric signature & complete decomposition of $JS$ \\
        \hline
        \hline
        \endhead
        $3$ & $2$ & $(0; 2, 2, 3, 3)$ & & $B(3)^2$ \\
        & $3$ & $(0; 2, 2, 2, 2, 3)$ & & $B_2 \times B(3)^2$ \\
        & & $(1; 3)$ & & $JS_{\mathbf{D}_3} \times B(3)^2$ \\
        & $4$ & $(1; 2, 2)$ & & $JS_{\mathbf{D}_3} \times B_2 \times B(3)^2$ \\
        \hline
        $4$ & $2$ & $(0; 2, 2, 2, 4)$ & $(0; \braket{s}, \braket{sr}, \braket{r^2}, \braket{r})$ & $B(4)^2$ \\
        & $3$ & $(1; 2)$ & $(1; \braket{r^2})$ & $JS_{\mathbf{D}_4} \times B(4)^2$ \\
        & & $(0; 2, 2, 2, 2, 2)$ & $(0; \braket{s}^2, \braket{sr}^2, \braket{r^2})$ & $B_2 \times B(4)^2$ \\
        & & $(0; 2, 2, 4, 4)$ & $(0; \braket{sr}^2, \braket{r}^2)$ & $B_3 \times B(4)^2$ \\
        & & & $(0; \braket{s}^2, \braket{r}^2)$ & $B_4 \times B(4)^2$ \\
        & $4$ & $(0; 2, 2, 4)$ & $(0; \braket{s}, \braket{sr}^3, \braket{r})$ & $B_2 \times B_3 \times B(4)^2$ \\
        & & & $(0; \braket{s}^3, \braket{sr}, \braket{r})$ & $B_2 \times B_4 \times B(4)^2$ \\
        & $5$ & $(1; 2)$ & $(1; \braket{sr}^2)$ & $JS_{\mathbf{D}_4} \times B_2 \times B_3 \times B(4)^2$ \\
        & & & $(1; \braket{s}^2)$ & $JS_{\mathbf{D}_4} \times B_2 \times B_4 \times B(4)^2$ \\
        \hline
        $6$ & $2$ & $(0; 2, 2, 2, 3)$ & $(0; \braket{s}, \braket{sr}, \braket{r^3}, \braket{r^2})$ & $B(6)^2$ \\
        & $3$ & $(0; 2, 2, 2, 6)$ & $(0; \braket{sr}^2, \braket{r^3}, \braket{r})$ & $B_3 \times B(6)^2$ \\
        & & & $(0; \braket{s}^2, \braket{r^3}, \braket{r})$ & $B_4 \times B(6)^2$ \\
        & $4$ & $(0; 2^5)$ & $(0; \braket{s}, \braket{sr}^3, \braket{r^3})$ & $B_2 \times B_3 \times B(6)^2$ \\
        & & & $(0; \braket{s}^3, \braket{sr}, \braket{r^3})$ & $B_2 \times B_4 \times B(6)^2$ \\
        & & $(0; 2, 2, 3, 6)$ & $(0; \braket{s}, \braket{sr}, \braket{r^2}, \braket{r})$ & $B(3)^2 \times B(6)^2$ \\
        & $5$ & $(1; 3)$ & $(1; \braket{r^2})$ & $JS_{\mathbf{D}_6} \times B(3)^2 \times B(6)^2$ \\
        & & $(0; 2, 2, 2, 2, 3)$ & $(0; \braket{s}^2, \braket{sr}^2, \braket{r^2})$ & $B_2 \times B(3)^2 \times B(6)^2$ \\
        & & $(0; 2, 2, 6, 6)$ & $(0; \braket{sr}^2, \braket{r}^2)$ & $B_3 \times B(3)^2 \times B(6)^2$ \\
        & & & $(0; \braket{s}^2, \braket{r}^2)$ &  $B_4 \times B(3)^2 \times B(6)^2$ \\
        & $6$ & $(0; 2, 2, 2, 2, 6)$ & $(0; \braket{s}, \braket{sr}^3, \braket{r})$ & $B_2 \times B_3 \times B(3)^2 \times B(6)^2$ \\
        & & & $(0; \braket{s}^3, \braket{sr}, \braket{r})$ & $B_2 \times B_4 \times B(3)^2 \times B(6)^2$ \\
        & $7$ & $(1; 2, 2)$ & $(1; \braket{sr}^2)$ & $JS_{\mathbf{D}_6} \times B_2 \times B_3 \times B(3)^2 \times B(6)^2$ \\
        & & & $(1; \braket{s}^2)$ & $JS_{\mathbf{D}_6} \times B_2 \times B_4 \times B(3)^2 \times B(6)^2$ \\
        \hline
    \end{longtable}
    }
\end{thm}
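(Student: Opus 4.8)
The plan is to characterize, via the dimension formulas of Proposition~\ref{pro-dihedral_dim_B(q)}, exactly when every group algebra component appearing in the decomposition of $JS$ is either trivial or an elliptic curve, and then to carry out the resulting finite enumeration. Recall that the decomposition involves the factors $JS_{\mathbf{D}_n}$ (of dimension $\gamma$), the $B_j$ (of dimension $\braket{\rho_a, \psi_j}$), and the $B(q)$ (of dimension $\tfrac{1}{2}\phi(q)\braket{\rho_a, \rho^{n/q}}$). Saying that the group algebra decomposition \emph{provides} a complete decomposition means precisely that each of these quantities is at most $1$.

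First I would restrict the admissible values of $n$. The component $B(n)$ associated with $q = n$ has dimension $\tfrac{1}{2}\phi(n)\braket{\rho_a, \rho^1}$, and Lemma~\ref{lem-rho1} gives $\braket{\rho_a, \rho^1} \geqslant 1$, so $\dim B(n) \geqslant \tfrac{1}{2}\phi(n)$. Requiring $\dim B(n) \leqslant 1$ forces $\phi(n) = 2$ and $\braket{\rho_a, \rho^1} = 1$; equivalently $n \in \set{3,4,6}$. This single observation collapses the problem onto the three values of $n$ appearing in the table, and in each of these cases every relevant $q \in \mathbb{Z}^{|n} - \set{1,2}$ satisfies $\phi(q) = 2$, so that $\dim B(q) = \braket{\rho_a, \rho^{n/q}}$.

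Second, for each $n \in \set{3,4,6}$ I would impose the full list of constraints $\gamma \leqslant 1$, $\braket{\rho_a, \psi_j} \leqslant 1$ and $\braket{\rho_a, \rho^{n/q}} \leqslant 1$, and substitute the explicit values of these inner products from Theorem~\ref{thm-dihedral_odd_formula} (for $n = 3$) and Theorem~\ref{thm-dihedral_even_formula} (for $n = 4, 6$). This turns the inequalities into linear constraints on $\gamma$, on $t = a+b$ (respectively on $a$ and $b$), and on the quantities $\widehat{\Psi}_\theta(n)$, $\widehat{\Psi}_\theta(\tfrac{n}{2})$ and $\widehat{\Psi}_\theta(h)$; together with the nonnegativity of all multiplicities these bound $\gamma$, $t$, and the number and orders of the elliptic generators, leaving only finitely many candidate signatures. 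For each candidate I would compute $g$ from the Riemann-Hurwitz formula, discard the cases with $g < 2$, and confirm realizability by a $\mathbf{D}_n$-action: for $n = 3$ via the criterion of \cite{@BujalanceEtAl03}, and for $n = 4, 6$ via the refined results Theorem~\ref{thm-existence_even_0} and Theorem~\ref{thm-existence_even_>0}.

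Since geometric signatures and analytic representations correspond bijectively (Theorem~\ref{thm:bij_geosig_anarep}), this enumeration is exhaustive, and reading off the nonvanishing components in each case produces the final column of the table. I expect the main obstacle to lie in the bookkeeping for the even cases: a single Fuchsian signature may split into several distinct geometric signatures with different analytic representations, so I must track the parities encoded by $a$ and $b$ and by the number $A = \widehat{\Psi}_\theta(n) - \widehat{\Psi}_\theta(\tfrac{n}{2})$ in order to separate the $B_3$- and $B_4$-contributions correctly, and then invoke the existence theorems to guarantee that each listed geometric signature is genuinely attained while no spurious case is introduced.
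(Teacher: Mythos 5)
Your proposal is correct and follows essentially the same route as the paper: both arguments first force $n \in \set{3,4,6}$ by combining Lemma~\ref{lem-rho1} with $\dim B(n) = \tfrac{1}{2}\phi(n)\braket{\rho_a,\rho^1}$ and $\phi(n)=2 \iff n\in\set{3,4,6}$, and then reduce the theorem to a finite enumeration in which every multiplicity in $\rho_a$ is $0$ or $1$. The only (immaterial) difference is the direction in which you traverse the bijection of Theorem~\ref{thm:bij_geosig_anarep}: the paper lists the handful of admissible analytic representations, certifies them with Theorems~\ref{thm-dihedral_rep_existence_odd} and \ref{thm-dihedral_rep_existence_even}, and recovers the geometric signatures via Propositions~\ref{pro-dihedral_inverse_formula_odd} and \ref{pro-dihedral_inverse_formula_even}, whereas you enumerate signatures directly and certify realizability with \cite{@BujalanceEtAl03} and Theorems~\ref{thm-existence_even_0} and \ref{thm-existence_even_>0} --- an equivalent check, and your closing remark about tracking $a$, $b$ and the parity of $A$ correctly identifies the bookkeeping needed to separate the $B_3$- and $B_4$-rows for even $n$.
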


\begin{proof}
   By Proposition~\ref{pro-dihedral_dim_B(q)}, if $\rho_a$ is the analytic representation of an action of $\mathbf{D}_n$ then $$\dim B(n) = \tfrac{1}{2} \phi(n) \braket{\rho_a, \rho^1}.$$
    The fact that $\phi(n) = 2$ if and only if $n \in \set{3,4,6}$ coupled with Lemma~\ref{lem-rho1} imply that $$n \neq 3,4,6 \, \implies \dim B(n) \geqslant 2.$$

    Assume that the group algebra decomposition with respect to $\mathbf{D}_3$ provides a complete decomposition of $JS$.
    Then, by \eqref{mesa}, the multiplicities of the irreducible $\mathbb{C}$-representations in $\rho_a$ are zero or one.
    Thus, $\rho_a$ is equivalent to either
    \begin{equation}
        \rho^1,\
        \psi_1 \oplus \rho^1,\
        \psi_2 \oplus \rho^1\ \text{or}\
        \psi_1 \oplus \psi_2 \oplus \rho^1.
    \end{equation}
    %
    Observe that
    \begin{equation}
    \widetilde{\Phi}_{\rho_a}(3) = \Phi_{\rho_a}(3) = \braket{\rho_a, \rho^1} - \braket{\rho_a, \psi_1 \oplus \psi_2} + 1.
    \end{equation}
    As a consequence of Theorem~\ref{thm-dihedral_rep_existence_odd}, each one of the representations above is the analytic representation of some $\mathbf{D}_3$-action.
    Obtaining the (geometric) signature of each such action is a straightforward application of Proposition~\ref{pro-dihedral_inverse_formula_odd}.

    \s
    
    For $\mathbf{D}_4$ and $\mathbf{D}_6$ the proof follows in the same way: employ Theorem~\ref{thm-dihedral_rep_existence_even} instead of Theorem~\ref{thm-dihedral_rep_existence_odd}, and apply Proposition~\ref{pro-dihedral_inverse_formula_even} to obtain the geometric signature.
\end{proof}

The following corollary is a quick consequence of the theorem above.

\begin{cor}
    If $n \geqslant 3$ is different from $3$, $4$ and $6$ then the group algebra decomposition with respect to each action of $\mathbf{D}_n$ does not provide a complete decomposition of the Jacobian.
\end{cor}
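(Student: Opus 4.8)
The plan is to single out one group algebra component that can never be an elliptic curve whenever $n\notin\{3,4,6\}$; the obstruction to a complete decomposition then follows immediately. The natural candidate is the component $B(n)$ associated with the representation $\rho^1$.

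First I would recall from Proposition~\ref{pro-dihedral_dim_B(q)} that, for any $\mathbf{D}_n$-action with analytic representation $\rho_a$, one has $\dim B(n) = \tfrac{1}{2}\phi(n)\braket{\rho_a,\rho^1}$. Next I would invoke Lemma~\ref{lem-rho1}, which guarantees $\braket{\rho_a,\rho^1}\geq 1$ for every $\mathbf{D}_n$-action in genus $g\geq 2$. Combining these gives the unconditional lower bound $\dim B(n)\geq\tfrac{1}{2}\phi(n)$.

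The remaining input is purely arithmetic: $\phi(n)=2$ holds precisely for $n\in\{3,4,6\}$, while $\phi(n)$ is even for every $n\geq 3$. Hence $n\neq 3,4,6$ forces $\phi(n)\geq 4$, so that $\dim B(n)\geq 2$. In particular, $B(n)$ is always a positive-dimensional factor of the group algebra decomposition that fails to be an elliptic curve.

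Finally, I would note that if the group algebra decomposition provided a complete decomposition of $JS$, then every positive-dimensional group algebra component—being a block that the decomposition does not refine further—would have to be an elliptic curve, i.e.\ of dimension exactly $1$. Since $\dim B(n)\geq 2$, this is impossible, which proves the corollary. I expect no genuine difficulty here: the argument is essentially a by-product of the dimension computation already carried out in the proof of Theorem~\ref{thm:comp_JacDec}, and the only point requiring care is the observation that a complete decomposition forced by the group algebra decomposition compels each of its components to be one-dimensional.
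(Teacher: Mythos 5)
Your proposal is correct and follows essentially the same route as the paper: the corollary there is drawn directly from Theorem~\ref{thm:comp_JacDec}, whose proof opens with exactly your chain of reasoning, namely $\dim B(n) = \tfrac{1}{2}\phi(n)\braket{\rho_a,\rho^1}$ from Proposition~\ref{pro-dihedral_dim_B(q)}, the bound $\braket{\rho_a,\rho^1}\geqslant 1$ from Lemma~\ref{lem-rho1}, and the arithmetic fact that $\phi(n)=2$ if and only if $n\in\set{3,4,6}$, forcing $\dim B(n)\geqslant 2$ otherwise. Your closing observation that a complete decomposition provided by the group algebra decomposition compels each positive-dimensional component to be an elliptic curve is the same (implicit) step the paper relies on, so there is nothing to add.
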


\subsection*{A generalization of the Ekedahl-Serre problem}

An interesting generalization of the Ekedahl-Serre problem involves seeking  Jacobians with an isogeny decomposition whose factors have the same dimension.

\begin{defi}
    Let $k$ be a nonnegative integer.
    An abelian variety is $k$-\emph{decomposable} if it is isogenous to a product of abelian varieties of dimension $k$.
    We say that such a product is a $k$-\emph{decomposition} of $JS$.
\end{defi}

\begin{lem}\label{lem:dim_B}
Let $\rho_a$ be the analytic representation of a $\mathbf{D}_n$-action in genus $g \geqslant 2$.
    For $q \in \mathbb{Z}^{|n} - \set{1,2}$ the following statements hold.
    \begin{enumerate}
        \item $\braket{\rho_a, \rho^1} \geqslant \braket{\rho_a, \rho^{n/q}}$.
        \item $\dim B(n) \geqslant 1$.
        \item $\dim B(n) \geqslant \dim B(q)$.
        \item If $\dim B(n) = \dim B(q)$ then $$q=n \mbox{ or } q=\tfrac{n}{2}  \mbox{ if }n \in 2\mathbb{Z} - 4\mathbb{Z}, \mbox{ and }
             \,\, q=n  \mbox{ otherwise }$$
\end{enumerate}
\end{lem}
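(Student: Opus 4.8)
The plan is to reduce all four assertions to the explicit dimension formula of Proposition~\ref{pro-dihedral_dim_B(q)}, namely $\dim B(q) = \tfrac12\phi(q)\braket{\rho_a,\rho^{n/q}}$, combined with the closed form for the multiplicities $\braket{\rho_a,\rho^h}$ coming from Theorem~\ref{thm-dihedral_odd_formula} (for $n$ odd) and Theorem~\ref{thm-dihedral_even_formula} (for $n$ even). In both cases one has $\braket{\rho_a,\rho^h} = 2(\gamma-1) + \tfrac12(a+b) + \widehat\Psi_\theta(n) - \widehat\Psi_\theta(h)$, and the key structural input is that $\widehat\Psi_\theta$ is a sum of the nonnegative quantities $\Psi_\theta(d)$, so $\widehat\Psi_\theta(m)\geqslant 0$ for every $m$ and $\widehat\Psi_\theta(1)=0$. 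First I would record that for $q\in\mathbb{Z}^{|n}-\set{1,2}$ the index $n/q$ lies in the admissible range of $h$, so these formulas apply verbatim to $\rho^{n/q}$.

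For (1), subtracting the formula at $h=1$ from the one at $h=n/q$ gives $\braket{\rho_a,\rho^1}-\braket{\rho_a,\rho^{n/q}} = \widehat\Psi_\theta(n/q)-\widehat\Psi_\theta(1) = \widehat\Psi_\theta(n/q)\geqslant 0$, which is the claim. Statement (2) is immediate: $\dim B(n)=\tfrac12\phi(n)\braket{\rho_a,\rho^1}$, and since $n\geqslant 3$ forces $\phi(n)\geqslant 2$ while Lemma~\ref{lem-rho1} gives $\braket{\rho_a,\rho^1}\geqslant 1$, we obtain $\dim B(n)\geqslant 1$. For (3) I would multiply the inequality $\phi(q)\leqslant\phi(n)$ (valid because $q\mid n$ implies $\phi(q)\mid\phi(n)$) by (1), using that both $\braket{\rho_a,\rho^{n/q}}$ and $\phi$ are nonnegative, to get $\dim B(q)=\tfrac12\phi(q)\braket{\rho_a,\rho^{n/q}}\leqslant\tfrac12\phi(n)\braket{\rho_a,\rho^1}=\dim B(n)$.

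For (4), suppose $\dim B(q)=\dim B(n)$. By (2) this common value is positive, so the product $\phi(q)\braket{\rho_a,\rho^{n/q}}=\phi(n)\braket{\rho_a,\rho^1}$ is strictly positive; since each factor on the left is bounded by the corresponding factor on the right (by $\phi(q)\leqslant\phi(n)$ and (1)), equality of the products forces $\phi(q)=\phi(n)$ and $\braket{\rho_a,\rho^{n/q}}=\braket{\rho_a,\rho^1}$. The conclusion then rests on the purely number-theoretic fact that a proper divisor $q$ of $n$ with $\phi(q)=\phi(n)$ must satisfy $n=2q$ with $q$ odd. I expect this to be the only real obstacle, and I would prove it via $\tfrac{\phi(n)}{\phi(q)} = \tfrac{n}{q}\prod_{p\mid n,\,p\nmid q}(1-\tfrac1p)$: writing $P'$ for the set of primes dividing $n$ but not $q$, the product $\prod_{p\in P'}p$ divides $n/q$, so $\tfrac{\phi(n)}{\phi(q)}\geqslant\prod_{p\in P'}(p-1)$; this is $>1$ when $P'=\varnothing$ and $\geqslant 2$ when $|P'|\geqslant 2$, leaving $P'=\set{p_0}$ and $\tfrac{n}{q}=\tfrac{p_0}{p_0-1}\in\mathbb{Z}$, whence $p_0=2$ and $n=2q$ with $q$ odd. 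Combined with $q\geqslant 3$, this gives $q=n$ when $n\notin 2\mathbb{Z}-4\mathbb{Z}$, and $q\in\set{n,\tfrac{n}{2}}$ exactly when $n\in 2\mathbb{Z}-4\mathbb{Z}$, as stated.
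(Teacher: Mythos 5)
Your proposal is correct and follows essentially the same route as the paper: the dimension formula $\dim B(q)=\tfrac12\phi(q)\braket{\rho_a,\rho^{n/q}}$, the multiplicity formulas of Theorems~\ref{thm-dihedral_odd_formula} and \ref{thm-dihedral_even_formula} (giving $\braket{\rho_a,\rho^1}-\braket{\rho_a,\rho^{n/q}}=\widehat\Psi_\theta(n/q)\geqslant 0$), Lemma~\ref{lem-rho1}, the monotonicity $\phi(q)\leqslant\phi(n)$ for $q\mid n$, and finally the fact that $\phi(q)=\phi(n)$ with $q\mid n$ forces $q=n$ or $n=2q$ with $q$ odd. The only difference is that you actually prove this last number-theoretic fact (which the paper merely asserts); just note that in your case $P'=\varnothing$ the correct conclusion comes from the exact identity $\phi(n)/\phi(q)=n/q>1$ rather than from the bound $\prod_{p\in P'}(p-1)$, whose empty product equals $1$.
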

\begin{proof} We recall that $$\dim B(q) = \tfrac{1}{2} \phi(q) \braket{\rho_a, \rho^{n/q}} \mbox{ and }\widehat{\Psi}_\theta(n) \geqslant \widehat{\Psi}_\theta(n) - \widehat{\Psi}_\theta(\tfrac{n}{q}).$$
   The proof of the first  statement follows after applying  Theorems~\ref{thm-dihedral_odd_formula} and \ref{thm-dihedral_even_formula}. The second statement follows from the first one and the fact that $\phi(n) \geqslant \phi(q)$. The third statement is an easy consequence of Lemma~\ref{lem-rho1}. Finally, by the first two statements we see that $$\dim B(n) = \dim B(q) \, \iff  \phi(n) = \phi(q) \mbox{ and } \braket{\rho_a, \rho^1} = \braket{\rho_a, \rho^{n/q}}.$$
    To conclude, note that $\phi(n) = \phi(q)$ implies that $n=q$ or $n=2q$ (for $q$ odd).
\end{proof}

\begin{lem}\label{pro:kdec_dimB}
    Let $S$ be a compact Riemann surface of genus $g \geqslant 2$ with a $\mathbf{D}_n$-action.
    If the group algebra decomposition of $JS$ with respect to $\mathbf{D}_n$ yields a $k$-decomposition of $JS$, then $k = \dim B(n)$ is a multiple of $\frac{1}{2} \phi(n)$.
\end{lem}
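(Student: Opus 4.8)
The plan is to isolate the distinguished factor $B(n)$ of the group algebra decomposition and to exploit the fact that it is always nontrivial. Recall that the group algebra decomposition of $JS$ with respect to $\mathbf{D}_n$ has the form
\begin{equation}
    JS \sim JS_{\mathbf{D}_n} \times B_2 \times B_3 \times B_4 \times \prod_{q \in \mathbb{Z}^{|n} - \set{1,2}} B(q)^2,
\end{equation}
where $B_3$ and $B_4$ are discarded when $n$ is odd. Taking $q = n$ (which is admissible since $n \in \mathbb{Z}^{|n} - \set{1,2}$ for $n \geqslant 3$), the factor $B(n)$ appears in this product. By Lemma~\ref{lem:dim_B}(2) one has $\dim B(n) \geqslant 1$, so $B(n)$ is a genuine, positive-dimensional factor of the decomposition.

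First I would fix the meaning of the hypothesis: saying that the group algebra decomposition \emph{yields} a $k$-decomposition means that every positive-dimensional factor occurring in it has dimension exactly $k$ (trivial factors being ignored). Since $B(n)$ is precisely such a factor, this immediately forces $\dim B(n) = k$, which establishes the first assertion of the statement.

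For the second assertion I would substitute $q = n$ into the dimension formula of Proposition~\ref{pro-dihedral_dim_B(q)}, obtaining
\begin{equation}
    k = \dim B(n) = \tfrac{1}{2} \phi(n) \braket{\rho_a, \rho^{1}}.
\end{equation}
Because $n \geqslant 3$, the Euler totient $\phi(n)$ is even, so $\tfrac{1}{2}\phi(n)$ is a positive integer; and $\braket{\rho_a, \rho^{1}}$ is a nonnegative integer (indeed $\geqslant 1$ by Lemma~\ref{lem-rho1}). Hence $k$ is a multiple of $\tfrac{1}{2}\phi(n)$, as claimed. There is no substantial obstacle here beyond correctly reading the hypothesis and invoking the guaranteed positivity of $\dim B(n)$; the only point needing a one-line justification is that $\tfrac{1}{2}\phi(n)$ is an integer, which uses $n \geqslant 3$.
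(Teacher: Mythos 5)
Your proof is correct and follows essentially the same route as the paper: force $\dim B(n) = k$ via the positivity guarantee of Lemma~\ref{lem:dim_B} (equivalently Lemma~\ref{lem-rho1}), then read off the divisibility from $\dim B(n) = \tfrac{1}{2}\phi(n)\braket{\rho_a,\rho^1}$ in Proposition~\ref{pro-dihedral_dim_B(q)}. In fact you spell out the multiple-of-$\tfrac{1}{2}\phi(n)$ step (including that $\phi(n)$ is even for $n \geqslant 3$) which the paper leaves implicit, so nothing is missing.
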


\begin{proof}
    If the group algebra decomposition yields a $k$-decomposition of $JS$, then $\dim B(n)$ equals zero or $k$.
    By Lemma~\ref{lem:dim_B} one has that $\dim B(n) \geqslant 1$ and hence $k = \dim B(n)$.
\end{proof}

\begin{thm}\label{thm:kdec_Jac}
    Let $n \geqslant 3$ and $k \geqslant 2$ be positive integers, and let $S$ be a compact Riemann surface of genus $g \geqslant 2$ with a $\mathbf{D}_n$-action.
    The group algebra decomposition with respect to $\mathbf{D}_n$ provides a $k$-decomposition of $JS$ if and only if one of the following cases occurs:
    {\tiny \begin{longtable}{|c|c|c|l|l|r|}
        \hline
        $n$ & $g$ & $g/k$ & $\text{signature}$ & $\text{geometric signature}$ & $k\text{-decomposition of}\ JS$ \\
        \hline
        \hline
        $3$ & $2m$ & $2$ & $(0; 2, 2, 3^{m+1})$ & & $B(3)^2$ \\
        & $3m$ & $3$ & $(0; 2^{2m+2}, 3)$ & & $B_2 \times B(3)^2$ \\
        \hline
        $4$ & $2m$ & $2$ & $(0; 2^{m+2}, 4)$ & $(0; \braket{s}, \braket{sr}, \braket{r^2}^m, \braket{r})$ & $B(4)^2$ \\
        & $4m$ & $4$ & $(0; 2^{2m+2}, 4)$ & $(0; \braket{s}, \braket{sr}^{2m+1}, \braket{r})$ & $B_2 \times B_3 \times B(4)^2$ \\
        & $4m$ & $4$ & & $(0; \braket{s}^{2m+1}, \braket{sr}, \braket{r})$ & $B_2 \times B_4 \times B(4)^2$ \\
        \hline
        $5$ & $4m$ & $2$ & $(0; 2, 2, 5^{m+1})$ &  & $B(5)^2$ \\
        & $6$ & $3$ & $(0; 2^6)$ & & $B_2 \times B(5)^2$ \\
        \hline
        6 & $4m$ & $4$ & $(0; 2, 2, 3^m, 6)$ & $(0; \braket{s}, \braket{sr}, \braket{r^2}^m, \braket{r})$ & $B(3)^2 \times B(6)^2$ \\
        & $6m$ & $6$ & $(0; 2^{2m+2}, 6)$ & $(0; \braket{s}, \braket{sr}^{2m+1}, \braket{r})$ & $B_2 \times B_3 \times B(3)^2 \times B(6)^2$ \\
        & $6m$ & $6$ & & $(0; \braket{s}^{2m+1}, \braket{sr}, \braket{r})$ & $B_2 \times B_4 \times B(3)^2 \times B(6)^2$ \\
        \hline
        $p$ & $(p-1) m$ & $2$ & $(0; 2, 2, p^{m+1})$ & & $B(p)^2$ \\
        \hline
        $p^e$ & $p^{e-1}(p-1) m$ & $2$ & $(0; 2, 2, p^m, (p^e))$ & & $B(p^e)^2$ \\
        \hline
        $pq$ & $(p-1)(q-1)$ & $2$ & $(0; 2, 2, p, q)$ & & $B(pq)^2$ \\
        \hline
        $2^e$ & $2^{e-1}m$ & $2$ & $(0; 2^{m+2}, (2^e))$ & $(0; \braket{s}, \braket{sr}, \braket{r^{2^{e-1}}}^m, \braket{r})$ & $B(2^e)^2$ \\
        \hline
        $2p$ & $p-1$ & $2$ & $(0; 2, 2, 2, p)$ & $(0; \braket{s}, \braket{sr}, \braket{r^p}, \braket{r^2})$ & $B(2p)^2$ \\
        & $2 (p-1) m$ & $4$ & $(0; 2, 2, p^m, 2p)$ & $(0; \braket{s}, \braket{sr}, \braket{r^2}^m, \braket{r})$ & $B(p)^2 \times B(2p)^2$ \\
        \hline
    \end{longtable}}
    Here $m$ is a positive integer;
    if $n \in \set{3,4,5}$ then $m \geqslant 2$.
    For the last five rows of the table above we make, in descending order, the following assumptions:
    \begin{enumerate*}[label=(\roman*)]
        \item $p \geqslant 7$ is prime;
        \item $p \geqslant 3$ is prime and $e \geqslant 2$;
        \item $p,q \geqslant 3$ are distinct primes;
        \item $e \geqslant 3$; and
        \item $p \geqslant 5$ is prime.
    \end{enumerate*}
    Also, for $n = p^e$, we introduce the notation $(p^e)$ to emphasize that the integer $p^e$ appears one time in the signature.
    Similarly for $n = 2^e$.
\end{thm}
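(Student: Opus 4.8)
The plan is to prove both implications by letting the dimension bookkeeping of Lemma~\ref{pro:kdec_dimB} and Lemma~\ref{lem:dim_B} carry the structural weight, and then running a finite casework on the arithmetic type of $n$ to produce the table. First I would fix the value of $k$. If the group algebra decomposition is a $k$-decomposition, then every component that survives must have dimension exactly $k$, so that $\gamma = \dim JS_{\mathbf{D}_n} \in \set{0,k}$, each $\dim B_j \in \set{0,k}$, and each $\dim B(q)\in\set{0,k}$; moreover Lemma~\ref{pro:kdec_dimB} together with Lemma~\ref{lem-rho1} forces $k=\dim B(n)=\tfrac12\phi(n)\braket{\rho_a,\rho^1}$ with $\braket{\rho_a,\rho^1}\geqslant 1$.

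The decisive step is then the analysis of the degree-two part. By Lemma~\ref{lem:dim_B}, $\dim B(q)\leqslant \dim B(n)=k$, with equality only for $q=n$ and, when $n\equiv 2\pmod 4$, also for $q=\tfrac n2$. Hence for every other $q\in\mathbb{Z}^{|n}-\set{1,2}$ I must have $\dim B(q)=0$, i.e. $\braket{\rho_a,\rho^{n/q}}=0$. Through Theorems~\ref{thm-dihedral_odd_formula} and \ref{thm-dihedral_even_formula} these vanishings translate into the rigid condition $Q_\theta\subseteq\set{n}$ (or $Q_\theta\subseteq\set{\tfrac n2,n}$ in the case $n\equiv2\pmod4$), which in turn controls $\widehat{\Psi}_\theta$ and therefore the cyclic part of the geometric signature: only cyclic subgroups of order $n$ — and, in the $n\equiv2\pmod4$ case, of order $n/2$ — may appear. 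Combined with the observation recorded before the Example (that $\gamma\geqslant1$, or $\gamma=0$ with too many involutions, makes $Q_\theta$ the full set $\mathbb{Z}^{|n}-\set{1,2}$), this pins $\gamma=0$ and bounds the number of branch points; the Riemann--Hurwitz formula then forces $k$ to grow so quickly in $\gamma$ that $\gamma=0$ is the only option compatible with $g\geqslant2$.

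Next I would stratify by the structure of $n$ — the small cases $n\in\set{3,4,5,6}$ (where $\tfrac12\phi(n)\in\set{1,2}$ permits the degree-one blocks $B_j$ and the block $B(n/2)$ to enter with dimension $k$), then $n=p$, $n=p^e$, $n=pq$, $n=2^e$ and $n=2p$. In each stratum I would combine the rigidity above with the realizability criteria of Theorems~\ref{thm-existence_even_0} and \ref{thm-existence_even_>0} (for $n$ even) and of \cite{@BujalanceEtAl03} (for $n$ odd), using the inverse formulas of Propositions~\ref{pro-dihedral_inverse_formula_odd} and \ref{pro-dihedral_inverse_formula_even} to solve for $(a,b)$, the multiplicities $l_j$, and $\gamma$; Riemann--Hurwitz then yields $g$ and the ratio $g/k$. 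For the converse direction, each row of the table is a geometric signature that realizes by the same existence theorems, and evaluating $\rho_a$ via Theorem~\ref{thm-dihedral_odd_formula} or \ref{thm-dihedral_even_formula} confirms that every surviving component has dimension $k$, giving the listed $k$-decomposition.

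The main obstacle will be the even-$n$ bookkeeping, where the genus data lives in the full geometric signature $(\gamma;\braket{s}^a,\braket{sr}^b,\ldots)$ and the parity constraints linking $a$, $b$ and $A$ in Theorems~\ref{thm-existence_even_0}–\ref{thm-existence_even_>0} must be reconciled with the requirements $\dim B_3,\dim B_4\in\set{0,k}$, which couple — via Lemma~\ref{lem:sign_func_AB} — to whether $B(n/2)$ survives. The truly delicate point is separating $n\equiv2\pmod4$ (where $\phi(n/2)=\phi(n)$ lets $B(n/2)$ match $B(n)$) from $n\equiv0\pmod4$ (where it cannot), and ensuring the enumeration is exhaustive rather than merely sufficient; the exceptional cases $n=4,6$ must be handled by hand, since there $\tfrac12\phi(n)=1$ lets the degree-one blocks coincide in dimension with $B(n)$.
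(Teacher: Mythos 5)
Your architecture is the paper's (pin $k=\dim B(n)=\tfrac12\phi(n)\braket{\rho_a,\rho^1}$ via Lemmas~\ref{lem-rho1}, \ref{lem:dim_B} and \ref{pro:kdec_dimB}, kill the intermediate $B(q)$, enumerate via the criteria of Theorems~\ref{thm-dihedral_rep_existence_odd}--\ref{thm-dihedral_rep_existence_even} and the inverse formulas), but the decisive middle step is wrong as you state it. From $\dim B(q)=0$ for $q\in\mathbb{Z}^{|n}-\set{1,2,n}$ (resp.\ also allowing $q=\tfrac n2$ when $n\equiv 2 \pmod 4$) you conclude that \emph{only cyclic stabilizers of order $n$ (resp.\ $n$ and $n/2$) may appear in the geometric signature}. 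This contradicts most rows of the very table you must produce: for $n=p^e$ the listed signature is $(0;2,2,p^m,(p^e))$, containing $m$ stabilizers of order $p$; for $n=pq$ it is $(0;2,2,p,q)$; for $n=4\equiv 0\pmod 4$ it is $(0;\braket{s},\braket{sr},\braket{r^2}^m,\braket{r})$ with order-$2$ cyclic stabilizers. The correct translation of $\braket{\rho_a,\rho^{n/q}}=0$, via Theorems~\ref{thm-dihedral_odd_formula} and \ref{thm-dihedral_even_formula}, is $2(\gamma-1)+\tfrac12(a+b)+\widehat{\Psi}_\theta(n)-\widehat{\Psi}_\theta(\tfrac nq)=0$; when $\gamma=0$ and $a+b=2$ this says \emph{exactly one} period $m_j$ fails to divide $n/q$, so lower-order cyclic stabilizers are not merely allowed but are typically forced to occur as compensating terms. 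An enumeration run on your rigidity claim would therefore miss most of the table.

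The second genuine gap is exhaustiveness, which you flag as ``the truly delicate point'' but for which you supply no mechanism. The paper devotes the entire first half of its proof to showing that a $k$-decomposition forces $n$ to be a prime power or a product of two primes, and the engine is condition (2) of Theorems~\ref{thm-dihedral_rep_existence_odd} and \ref{thm-dihedral_rep_existence_even} (nonnegativity of the inverse divisor transform): with the intermediate multiplicities pinned, one computes $\widetilde{\Phi}_{\rho_a}(p_1p_2)=-m<0$ when $n\neq p_1p_2$ has two distinct odd prime divisors, $\widetilde{\Phi}_{\rho_a}(2p)\leqslant -m<0$ in the even cases, and the residual case $n=2p^2$ is eliminated only by the lcm-support condition (4) of Theorem~\ref{thm-dihedral_rep_existence_even} after all multiplicities have been forced. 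None of this follows from your $Q_\theta\subseteq\set{n}$ observation alone. Relatedly, your route to $\gamma=0$ fails for $n$ prime: there $\mathbb{Z}^{|n}-\set{1,2}=\set{n}$, so the remark before the Example imposes no constraint, and Riemann--Hurwitz by itself does not exclude $\gamma=k$; what does is the degree-one bookkeeping, e.g.\ if $\braket{\rho_a,\psi_1}=\gamma=k$ then condition (1) of Theorem~\ref{thm-dihedral_rep_existence_odd} forces $\braket{\rho_a,\psi_2}=k$, hence $t=2$ and $m=2k-1+\widehat{\Psi}_\theta(n)$, which against $k=\tfrac12\phi(n)m\geqslant m$ yields $k\leqslant 1$, contradicting $k\geqslant 2$. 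Repairing your draft means replacing the rigidity claim by the compensation analysis above and adding the $\widetilde{\Phi}_{\rho_a}$-negativity argument (including the special case $n=2p^2$) before any stratification by the type of $n$.
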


\begin{proof} We start the proof by showing that
    if for some $k \in \mathbb{Z}_+$ the group algebra decomposition with respect to $\mathbf{D}_n$ provides a $k$-decomposition of $JS$, then $n$ is either a prime power or the product of two primes. Let $\rho_a$ be the analytic representation of such a $\mathbf{D}_n$-action.
    We write $m := \braket{\rho_a, \rho^1}$, which is a positive integer by Lemma~\ref{lem-rho1}.
    We proceed by contradiction.
    Assume that $n$ is not a prime power nor the product of two primes.
    There are two cases to consider.

    \s

{\bf (1)} Assume $n \notin 2 \mathbb{Z} - 4 \mathbb{Z}$.
        By Lemma~\ref{lem:dim_B} one has that $\braket{\rho_a, \rho^{n/q}} = 0$ for all $q \in \mathbb{Z}^{|n} - \set{1, 2, n}$.
        It follows that
        $$\Phi_{\rho_a}(q) =
            \begin{cases}
                m - \braket{\rho_a, \psi_1 \oplus \psi_2} + 1 & \text{if}\ (n,q) = n \\
                m - \braket{\rho_a, \psi_3 \oplus \psi_4} & \text{if}\ n\ \text{is even and}\ (n,q) = \tfrac{n}{2} \\
                m & 1 < (n,q) < \tfrac{n}{2} \\
                0 & (n,q) = 1
            \end{cases}$$

        \begin{enumerate}
        \item[(a)] If $n$ is odd then there are distinct odd primes $p_1,p_2 \in \mathbb{Z}^{|n}$ such that $n \neq p_1p_2$ and therefore $\widetilde{\Phi}_{\rho_a}(p_1p_2) = -m < 0$.
        \item[(b)]  If $n$ is even then there is an odd prime $p \in \mathbb{Z}^{|n}$ such that $n \neq 2p$ and therefore  $\widetilde{\Phi}_{\rho_a}(2p) \leqslant - m < 0$.
        \end{enumerate}In both cases, the second statement of  Theorem~\ref{thm-dihedral_rep_existence_odd} is not satisfied;  a contradiction.

        \s
        
        {\bf (2)} Assume $n \in 2\mathbb{Z} - 4\mathbb{Z}$.
        Note that $\tfrac{n}{2}$ is odd.
        By Lemma~\ref{lem:dim_B} one has that $\braket{\rho_a, \rho^{n/q}} = 0$ for all $q \in \mathbb{Z}^{|n} - \set{1,2,\tfrac{n}{2},n}$, and $\braket{\rho_a, \rho^2} = 0$  or $m$.
        It follows that
        \begin{equation}
            \Phi_{\rho_a}(q) =
            \begin{cases}
                m - \braket{\rho_a, \psi_1 \oplus \psi_2} + 1 & \text{if}\ (n,q) = n \\
                m - \braket{\rho_a, \psi_3 \oplus \psi_4} & \text{if}\ n\ \text{is even and}\ (n,q) = \tfrac{n}{2} \\
                m & 2 < (n,q) < \tfrac{n}{2} \\
                m - \braket{\rho_a, \rho^2} & (n,q) = 2 \\
                0 & (n,q) = 1
            \end{cases}
        \end{equation}

\begin{enumerate}
    \item[(a)] If there are distinct odd prime numbers $p_1,p_2 \in \mathbb{Z}^{|n}$, then $\widetilde{\Phi}_{\rho_a}(p_1p_2) < 0$ and we conclude as in (1).
    \item[(b)] If $n = 2p^e$ for some odd prime $p$ and $e \geqslant 3,$ then $\widetilde{\Phi}_{\rho_a}(2p^{e-1}) = - m < 0$ and we conclude as in (1).
    \item[(c)] If $n=2p^2$ then$$ \widetilde{\Phi}_{\rho_a}(2p^2) = \braket{\rho_a, \psi_3 \oplus \psi_4} - \braket{\rho_a, \psi_1 \oplus \psi_2} + 1 - m,$$ $$
            \widetilde{\Phi}_{\rho_a}(p^2) = -\braket{\rho_a, \psi_3 \oplus \psi_4}, \,\,\,\,
            \widetilde{\Phi}_{\rho_a}(2p) = \braket{\rho_a, \rho^2} - m,$$ $$ 
            \widetilde{\Phi}_{\rho_a}(p) = m, \,\,
            \widetilde{\Phi}_{\rho_a}(2) = m - \braket{\rho_a, \rho^2}.$$Note that the statement (2) of Theorem~\ref{thm-dihedral_rep_existence_even} tells us that $$\widetilde{\Phi}_{\rho_a}(q) \geqslant 0  \, \mbox{ for } \, q \in \set{2p^2, p^2, 2p, p, 2}.$$
       Consequently,   $\braket{\rho_a, \psi_j} = 0$ for $j=1,\ldots,4$, $\braket{\rho_a, \rho^2} = m=1$.
        However, $$\operatorname{lcm}(\operatorname{Supp} \widetilde{\Phi}_{\rho_a}) = p \neq 2p^2,$$  contradicting  the statement (4) of Theorem~\ref{thm-dihedral_rep_existence_even}.
\end{enumerate}
   \s 

    Now, we proceed to determine all the geometric signatures of $\mathbf{D}_n$, for $n \geqslant 3$ a prime power or a product of two primes, whose associated group algebra decomposition provides a $k$-decomposition, for some $k \geqslant 2$.
    Our approach will be to apply Theorems \ref{thm-dihedral_rep_existence_odd} and \ref{thm-dihedral_rep_existence_even} to find all the compatible analytic representations.
    Then, we obtain the geometric  signature as an  application of Propositions~\ref{pro-dihedral_inverse_formula_odd} and \ref{pro-dihedral_inverse_formula_even}.

\s

    Let $\rho_a$ be the analytic representation of a $\mathbf{D}_n$-action as above.
    Lemma~\ref{pro:kdec_dimB} states that $k = \frac{1}{2} \phi(n) m$ with $m = \braket{\rho_a, \rho^1}$ a positive integer.
    Proposition~\ref{pro-dihedral_dim_B(q)} together with Lemma~\ref{lem:dim_B} imply that $\braket{\rho_a, \rho^{n/q}} = 0$ for $q \in \mathbb{Z}^{|n} - \set{1,2,n}$;
    unless $n \in 2\mathbb{Z} - 4 \mathbb{Z}$, in which case we can also have $\braket{\rho_a, \rho^2} = m$.
    Moreover, by Theorem~\ref{thm-dihedral_rep_existence_odd}(3) and Theorem~\ref{thm-dihedral_rep_existence_even}(3), $\braket{\rho_a, (\rho^{n/q})^\sigma} = \braket{\rho_a, \rho^{(n,h)}}$ for each element $\sigma$ of the Galois group of $\rho^{n/q}$.
   Summarizing, we have that
    \begin{equation}
        \braket{\rho_a, \rho^h} =
        \begin{cases}
            m & \text{if}\ (n,h) = 1 \\
            0\ \text{or}\ m & \text{if}\ n \in 2 \mathbb{Z} - 4 \mathbb{Z}\ \text{and}\ (n,h) = \tfrac{n}{2} \\
            0 & \text{otherwise}
        \end{cases}
    \end{equation}
    In addition, by Proposition~\ref{pro-dihedral_dim_B(q)} one has that $\braket{\rho_a, \psi_j} = 0$ or $k$.

\s

    We provide a step-by-step explanation of the rest of the proof for just one concrete case. The remaining ones are obtained in an analogous way.

\s

    Assume that $n=4$.
    Then, $k=m \geqslant 2$ and
    \begin{align}
        \widetilde{\Phi}_{\rho_a}(4) &= \braket{\rho_a, \psi_3 \oplus \psi_4} - \braket{\rho_a, \psi_1 \oplus \psi_2} + 1, \\
        \widetilde{\Phi}_{\rho_a}(2) &= m - \braket{\rho_a, \psi_3 \oplus \psi_4}.
    \end{align}
    The second statement  of Theorem~\ref{thm-dihedral_rep_existence_even} requires that $\widetilde{\Phi}_{\rho_a}(4)$ and  $\widetilde{\Phi}_{\rho_a}(2)$ are nonnegative.
    In particular, $\braket{\rho_a, \psi_3 \oplus \psi_4} \leqslant m$.

\begin{enumerate}
    \item  If $\braket{\rho_a, \psi_3 \oplus \psi_4} = 0$ then $\braket{\rho_a, \psi_i} = 0$ for $i=1,2,3,4.$ 
    \item If $\braket{\rho_a, \psi_3 \oplus \psi_4} = m$ then,  by the first  statement  of Theorem~\ref{thm-dihedral_rep_existence_even}, which says that $\braket{\rho_a, \psi_2} + 1 \geqslant \braket{\rho_a, \psi_1} + m$, we have that $\braket{\rho_a, \psi_1} = 0$ and $\braket{\rho_a, \psi_2} = m$.
\end{enumerate}We then  conclude, by Theorem~\ref{thm-dihedral_rep_existence_even}, that the compatible analytic representations are $m W(4)$, $m \psi_2 \oplus m \psi_3 \oplus m W(4)$ and $m \psi_2 \oplus m \psi_4 \oplus m W(4)$.\end{proof}

The following is a direct consequence  of the previous theorem.

\begin{cor}
    For $k \geqslant 2$, if $n$ is neither a prime power nor the product of two primes, then the group algebra decomposition with respect to each action of $\mathbf{D}_n$ does not provide a $k$-decomposition of the Jacobian.
\end{cor}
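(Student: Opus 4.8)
The plan is to deduce the corollary directly from Theorem~\ref{thm:kdec_Jac}, of which it is the contrapositive of the opening assertion. Indeed, the first part of the proof of that theorem establishes that whenever the group algebra decomposition of $JS$ with respect to $\mathbf{D}_n$ yields a $k$-decomposition for some $k \geqslant 2$, the integer $n$ must be a prime power or the product of two primes. Equivalently, every value of $n$ appearing in the exhaustive table is of this form. Hence, if $n$ is neither a prime power nor the product of two primes, it does not occur in the table, and no $\mathbf{D}_n$-action can produce a $k$-decomposition of $JS$. This observation suffices.

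Should one prefer a self-contained argument rather than a bare citation, I would reconstruct the relevant portion of the proof of Theorem~\ref{thm:kdec_Jac} by contradiction. Suppose $\rho_a$ is the analytic representation of a $\mathbf{D}_n$-action whose group algebra decomposition is a $k$-decomposition, and write $m = \braket{\rho_a, \rho^1} \geqslant 1$, positive by Lemma~\ref{lem-rho1}. By Lemma~\ref{pro:kdec_dimB} one has $k = \tfrac{1}{2}\phi(n) m$, and by Lemma~\ref{lem:dim_B} the only degree-two multiplicities that can be nonzero are $\braket{\rho_a, \rho^{n/q}}$ with $q \in \set{1, 2, n}$, together with the exceptional value $q = n/2$ when $n \in 2\mathbb{Z} - 4\mathbb{Z}$. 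Feeding this information into the formula for $\Phi_{\rho_a}$ of Definition~\ref{def-pre_signature_func} and passing to the inverse divisor transform $\widetilde{\Phi}_{\rho_a}$, I would exhibit a divisor of $n$ on which $\widetilde{\Phi}_{\rho_a}$ is strictly negative, contradicting condition~(2) of Theorem~\ref{thm-dihedral_rep_existence_odd} (if $n$ is odd) or Theorem~\ref{thm-dihedral_rep_existence_even} (if $n$ is even). Concretely, if $n$ has two distinct odd prime factors $p_1, p_2$ then $\widetilde{\Phi}_{\rho_a}(p_1 p_2) < 0$, while the remaining cases ($n$ even with an odd prime factor, and $n = 2p^e$ with $e \geqslant 3$) are handled by the analogous divisors $2p$ and $2p^{e-1}$.

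The only genuinely delicate point, which is the obstacle already overcome inside the proof of Theorem~\ref{thm:kdec_Jac}, is the borderline family $n \in 2\mathbb{Z} - 4\mathbb{Z}$, and in particular $n = 2p^2$, where $\rho^2$ may also carry multiplicity $m$ so that the crude negativity estimate fails. There one must additionally invoke the $\operatorname{lcm}$-type condition~(4) of Theorem~\ref{thm-dihedral_rep_existence_even} to eliminate the lone surviving representation. Apart from this single case, the verification reduces to a routine sign computation with the inverse divisor transform, so I expect no serious difficulty beyond bookkeeping; and for the purposes of this corollary it is cleanest simply to cite the opening claim of Theorem~\ref{thm:kdec_Jac}.
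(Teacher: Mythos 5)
Your proposal is correct and matches the paper exactly: the paper proves this corollary as a direct consequence of Theorem~\ref{thm:kdec_Jac}, whose proof opens with precisely the contradiction argument you reconstruct (negativity of $\widetilde{\Phi}_{\rho_a}$ on a suitable divisor, with the borderline case $n=2p^2$ handled via condition~(4) of Theorem~\ref{thm-dihedral_rep_existence_even}). Your optional self-contained sketch faithfully reproduces that inner argument, so nothing is missing.
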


We end this section by pointing out two facts.

\s 

\begin{enumerate}
    \item Theorem \ref{thm:kdec_Jac} implies that the group algebra decomposition with respect to $\mathbf{D}_n$ provides a $2$-decomposition if and only if  one of the following cases occurs:
    {\tiny \begin{longtable}{|c|c|l|l|r|}
        \hline
        $n$ & $g$ & $\text{signature}$ & $\text{geometric signature}$ & $2\text{-decomposition of}\ JS$ \\
        \hline
        \hline
        $3$ & $4$ & $(0; 2, 2, 3^3)$ & & $B(3)^2$ \\
        & $6$ & $(0; 2^6, 3)$ & & $B_2 \times B(3)^2$ \\
        \hline
        $4$ & $4$ & $(0; 2^4, 4)$ & $(0; \braket{s}, \braket{sr}, \braket{r^2}^2, \braket{r})$ & $B(4)^2$ \\
        & $8$ & $(0; 2^6, 4)$ & $(0; \braket{s}, \braket{sr}^5, \braket{r})$ & $B_2 \times B_3 \times B(4)^2$ \\
        & $8$ & & $(0; \braket{s}^5, \braket{sr}, \braket{r})$ & $B_2 \times B_4 \times B(4)^2$ \\
        \hline
        $5$ & $4$ & $(0; 2, 2, 5)$ &  & $B(5)^2$ \\
        & $6$ & $(0; 2^6)$ & & $B_2 \times B(5)^2$ \\
        \hline
        $6$ & $8$ & $(0; 2, 2, 3, 3, 6)$ & $(0; \braket{s}, \braket{sr}, \braket{r^2}^2, \braket{r})$ & $B(3)^2 \times B(6)^2$ \\
        & $12$ & $(0; 2^6, 6)$ & $(0; \braket{s}, \braket{sr}^5, \braket{r})$ & $B_2 \times B_3 \times B(3)^2 \times B(6)^2$ \\
        & $12$ & & $(0; \braket{s}^5, \braket{sr}, \braket{r})$ & $B_2 \times B_4 \times B(3)^2 \times B(6)^2$ \\
        \hline
        $8$ & $4$ & $(0; 2, 2, 2, 8)$ & $(0; \braket{s}, \braket{sr}, \braket{r^4}, \braket{r})$ & $B(8)^2$ \\
        \hline
        $10$ & $4$ & $(0; 2, 2, 2, 5)$ & $(0; \braket{s}, \braket{sr}, \braket{r^5}, \braket{r^2})$ & $B(10)^2$ \\
        & $8$ & $(0; 2, 2, 5, 10)$ & $(0; \braket{s}, \braket{sr}, \braket{r^2}, \braket{r})$ & $B(5)^2 \times B(10)^2$ \\
        \hline
    \end{longtable}}
    \newpage
    It would be interesting to study if these actions could provide new examples of completely decomposable Jacobians, by considering the results in \cite{@AuffarthEtAl17} and \cite{@RodriguezRojas24}.
    
    \item A natural extension of the results of this section is the following problem: determine all those $\mathbf{D}_n$-actions giving rise to a group algebra decomposition in terms of factors of dimension at most $k.$ Particularly interesting would be to explore the case $k=2.$
\end{enumerate}

\paragraph{\bf Statements and Declarations} On behalf of all authors, the corresponding author states that there is no conflict of interest. This manuscript has no associated data.

\printbibliography[title=References]

\end{document}